\newcommand{\myquote}{{\tt "}}
\newcommand{\nk}{K} %
\newcommand{\nkn}{\!\frac{\nk}{n}\!} %
\newcommand{\baralpha}{\bar{\alpha}} %
\newcommand{\ent}[1]{\left\lceil #1 \right\rceil}
\renewcommand{\geq}{\geqslant}
\renewcommand{\leq}{\leqslant}
\newcommand{\R}{\mathbb{R}}
\newcommand{\N}{\mathbb{N}}
\newcommand{\NEW}[1]{{\em #1}\index{#1}}
\newcommand{\rmax}{\R_{\max}}
\newcommand{\C}{\mathbb{C}}
\newcommand{\trop}{\operatorname{\mathsf{t}}}
\newcommand{\mon}{\operatorname{mon}}
\newcommand{\cF}{c}
\newcommand{\set}[2]{\{#1\mid #2\}}
\newcommand{\binomial}[2]{{#1 \choose #2}}
\newcommand{\detp}{\tilde{p}}
\newcommand{\E}{\mathcal{E}}
\newcommand{\U}{\mathcal{U}}
\newcommand{\col}[1]{{\sc{#1}}}
\newcommand{\nf}{\mathrm{nF}}
\newcommand{\diag}{\operatorname{diag}}
\theoremstyle{plain}%
\newtheorem{theorem}{Theorem}[section]
\newtheorem{lemma}[theorem]{Lemma}
\newtheorem{proposition}[theorem]{Proposition}
\newtheorem{corollary}[theorem]{Corollary}
\theoremstyle{definition}
\newtheorem{example}[theorem]{Example}
\theoremstyle{remark}
\newtheorem{remark}[theorem]{Remark}
\newtheorem{property}[theorem]{Property}
\def\refcounter#1{\protected@edef\@currentlabel
       {\csname the#1\endcsname}
}
\newcounter{assume}
\def\theassume{{\rm (A\arabic{assume})}}
\def\myitem{\refstepcounter{assume}\item[\theassume\hskip 0.72ex]}
\title{Log-majorization of the moduli of the eigenvalues of a matrix polynomial by tropical roots}
\author{Marianne Akian}
\thanks{The first and second authors were partially supported by PGMO, a joint program of EDF and FMJH (Fondation Math\'ematique Jacques Hadamard).}
\address{Marianne Akian,
INRIA and CMAP, \'Ecole 
polytechnique, UMR 7641, CNRS. Address:
CMAP, \'Ecole polytechnique,
Route de Saclay,
91128 Palaiseau Cedex, France.}
\email{Marianne.Akian@inria.fr}
\author{St\'ephane Gaubert}
\thanks{The last two authors were partly supported by the Arpege programme of the French National Agency of Research (ANR), project ``ASOPT'', number ANR-08-SEGI-005.}
\address{St\'ephane Gaubert,
INRIA and CMAP, \'Ecole 
polytechnique, UMR 7641, CNRS. Address:
CMAP, \'Ecole polytechnique,
Route de Saclay,
91128 Palaiseau Cedex, France.}
\email{Stephane.Gaubert@inria.fr}
\author{Meisam Sharify}
\thanks{This work was performed when the third author was with INRIA Saclay--\^Ile-de-France and CMAP, Ecole polytechnique.}
\address{Meisam Sharify,
School of Mathematics, Institute for Research in Fundamental Sciences (IPM), P.O. Box:19395-5746, Tehran, Iran.}
\email{meisam.sharify@ipm.ir}
\keywords{Matrix polynomial, Tropical algebra, Majorization of eigenvalues, Tropical roots, Roots of polynomial, Bound of P\'olya.}
\subjclass[2010]{15A22,15A80,15A18,47J10}
\begin{document}

\begin{abstract}
We show that the sequence of moduli of the eigenvalues of a matrix polynomial is log-majorized, up to universal constants, by a sequence of ``tropical roots'' depending only on the norms of the matrix coefficients. These tropical roots are the non-differentiability points of an auxiliary tropical polynomial, or equivalently, the opposites of the slopes of its Newton polygon. This extends to the case of matrix polynomials some bounds obtained by Hadamard, Ostrowski and P\'olya for the roots of scalar polynomials. We also obtain new bounds in the scalar case, which are accurate for ``fewnomials'' or when the tropical roots are well separated.
\end{abstract}

\maketitle

\pagestyle{myheadings}
\thispagestyle{plain}
\markboth{Marianne Akian, St\'ephane Gaubert and Meisam Sharify }{Log-majorization of the moduli of the eigenvalues of a matrix polynomial by tropical roots}

\section{Introduction}
Let $p(x)=\sum_{j=0}^n{a_jx^j},\quad a_j\in \C$
be a polynomial of degree $n$ in a complex variable $x$.
Let  $\zeta_1,\dots,\zeta_n$ denote the roots of $p(x)$ arranged 
by non-decreasing modulus
(i.e., $|\zeta_1|\leq\dots\leq|\zeta_n|$).
We associate with $p$ the {\em tropical polynomial} $\trop p(x)$, 
defined for all nonnegative numbers $x$ by
\[ \trop p(x):=\max_{0\leq j\leq n}|a_j|x^j
\enspace .
\]
The {\em tropical roots} of $\trop p$, $\alpha_1,\dots,\alpha_n$,
ordered by non-decreasing value
(i.e., $\alpha_1\leq \dots\leq \alpha_n$), are defined as the non-differentiability
points of the function $\trop p$, counted with certain multiplicities. They
coincide with the exponential of the opposite of the slopes of
the edges of a Newton polygon, defined by Hadamard~\cite{hadamard1893}
and Ostrowski~\cite{Ostrowski1,Ostrowski2}
as the upper boundary of the convex hull of the set of points $\{(j,\log |a_j|)\mid 0\leq j\leq n\}$. The logarithms of these roots were called the {\em inclinaisons num\'eriques} by Ostrowski. One interest of these roots is that
they can be easily computed (linear number of arithmetic operations
and comparisons).
See Section~\ref{sec:tropical_polynomials} below for details. 

Hadamard was probably the first to prove a log-majorization type inequality for the modulus of the roots of a scalar polynomial by 
using what we call today the tropical roots. His result (page 201
of~\cite{hadamard1893}, third inequality) can be restated
as follows in tropical terms:
\begin{equation}
\label{chapter3:eq:hadamard_bound}
\frac{|\zeta_1\zeta_2\ldots\zeta_k|}{\alpha_1\cdots\alpha_k}\geq \frac{1}{k+1}\enspace.
\end{equation}
This bound, proved in passing in a memoir devoted to
the Riemann zeta function, remained apparently not so well known.
In particular, the special case $|\zeta_1|/\alpha_1 \geq 1/2$
is equivalent to the homogeneous form of the classical bound
of Cauchy, established later on by Fujiwara~\cite{fujiwara},
and a weaker inequality, with $\alpha_1^k$ at the denominator
instead of $\alpha_1\cdots\alpha_k$, appeared later on in the work
of Specht~\cite{specht1938}.

Ostrowski proved several bounds on the
roots of a polynomial in his work on the method
of Graeffe~\cite{Ostrowski1,Ostrowski2},
in which he used again the Newton polygon considered by Hadamard.
In particular, he obtained the following upper bound
(see~\cite[\S 7]{Ostrowski1}),
\begin{equation}
\label{eq:ostrowski}
\frac{|\zeta_1\zeta_2\cdots\zeta_k|}{\alpha_1\cdots\alpha_k }
\leq
\binomial{n}{k} \enspace,
\end{equation}
which can be thought of as a generalization of a
``reverse'' of the Cauchy inequality due
to Birkhoff~\cite{birkhoff} (corresponding to the case $k=1$ in~\eqref{eq:ostrowski}).
He also gave a different proof of a variant of~\eqref{chapter3:eq:hadamard_bound}, 
with the constant $1/(2k)$ instead of $1/(k+1)$, and 
reported a private communication of P\'olya, 
leading to a tighter constant
\begin{equation}
\label{eq:polyabound}
\frac{|\zeta_1\zeta_2\cdots\zeta_k|}{\alpha_1\cdots\alpha_k }
\geq
\frac{1}{\sqrt{\E(k) (k+1)}}\enspace,
\end{equation}
with
\begin{equation}\label{defEc}
\E(k):=\left(\frac{k+1}{k}\right)^{k} <e\enspace.\end{equation}

In this paper, we generalize the bounds of Hadamard, Ostrowski,
and P\'olya, to the case of a matrix polynomial
\begin{equation}\label{matrixpoly}
P(\lambda)=A_0+A_1\lambda+\dots+A_d\lambda^d, \quad A_j\in\C^{n\times n},\quad 0\leq j\leq d\enspace.
\end{equation}
We now associate
with the matrix polynomial $P$ the tropical polynomial 
\begin{equation}
\label{eq:tropdefini}
\trop p(x):=\max_{0\leq j\leq d}\|A_j\|x^j \enspace,
\end{equation}
where $\|\cdot \|$ is a norm on the space of matrices,
and show that the moduli of the roots $\zeta_1,\dots,\zeta_{nd}$
of $P$ can still be controlled in terms of the tropical roots 
$\alpha_1,\dots,\alpha_n$ of $\trop p$.
Our results give in particular bounds on the ratios
$|\zeta_1\dots\zeta_{nk}|/(\alpha_1\dots\alpha_{k})^n$,
which extend and refine the above bounds.
In particular, in Theorem~\ref{thm:mainthmmatrox}, we extend 
the lower bound~\eqref{eq:polyabound} of P\'olya 
to the matrix polynomial case, and in Theorem~\ref{thm:newthem}, 
we extend the upper bound~\eqref{eq:ostrowski} of Ostrowski.
Note that, for the lower bounds to be similar to
the one of P\'olya and in some sense independent of the dimension
of the matrices, the norm $\|\cdot \|$ must satisfy 
assumptions described in Section~\ref{subsec-norm},
which hold in particular for the normalized Frobenius norm.

We also obtain other lower bounds that are new 
even in the case of scalar polynomials.
In particular, in Theorem~\ref{thm:mainthmnnz}, we obtain a lower bound which 
may be tighter for ``fewnomials''. In Theorems~\ref{thm:mainthmmatrox3}
and~\ref{thm:mainthmpoly3}, we obtain general lower bounds, which extend 
the bound of P\'olya and its extension to the matrix case,
and which may be much tighter when the
tropical roots are sufficiently separated.
Then, all together our results show that the
tropical roots give tight estimates of the moduli of the
eigenvalues if the tropical roots are sufficiently
separated and if certain matrices are sufficiently well
conditioned. 

The results of the present paper combine ideas from
max-plus algebra and tropical geometry, and numerical linear algebra.
In~\cite{ABG96,abg04,abg04b,akianbapatgaubert2016}, Akian, Bapat, and Gaubert 
studied the eigenvalues and eigenvectors of matrices and matrix polynomials
whose entries (the $A_j$) are functions, for instance Puiseux series, 
of a (perturbation) parameter. It is shown there that the 
leading exponents of the Puiseux series
representing the different eigenvalues (resp.\ eigenvectors) coincide, 
under some genericity conditions, 
with the ``tropical eigenvalues'' (resp.\ eigenvectors)
of the tropical  matrix polynomial with entries  equal to
the leading exponents of the entries of the initial  matrix polynomial.
This can be interpreted in the light of tropical
geometry (see~\cite{viro,itenberg,sturmfels2005}),
using the notion of non-archimedean amoeba~\cite{kapranov}
 with respect to the usual 
non-archimedean valuation on the field of Puiseux series
(taking the leading exponent).
Since amoebas with respect to the archimedean
valuation $z\mapsto \log |z|$ on the field of complex numbers $\C$
can be approximated by non-archimedean amoebas~\cite{passare,purbhoo},
tropical eigenvalues are expected to 
provide approximations of the log of moduli of the classical eigenvalues.
The above bounds of Hadamard, Ostrowski, and P\'olya
can be interpreted as an approximation result of this nature,
for roots of univariate scalar valued polynomials.
In the case of a matrix with non-negative coefficients,
Friedland~\cite{shmuel} established a bound for its spectral radius (or its
Perron eigenvalue), which can be interpreted
as a bound of the maximal eigenvalue of $A$ by the maximal
tropical eigenvalue of the valuation of $A$~\cite{abg05}.
Similar bounds for the other eigenvalues
have been established by Akian, Gaubert and Marchesini in~\cite{marche}.

Here,  we replaced the valuation on $\C$ by the 
``valuation'' $A \mapsto \log \|A\|$ on the ring of $n\times n$ matrices
over $\C$, which leads to~\eqref{eq:tropdefini}. 
The tropical roots obtained in this way are easy to compute
and we shall see they provide a good approximation of the moduli 
of the eigenvalues under reasonable assumptions.
The idea of using the norm instead of the usual valuation was inspired by 
several works in numerical linear algebra, suggesting that the
information of the norms is relevant.
For instance, Higham and Tisseur~\cite{highamtisseur03} extended
to matrix polynomials the bound of Cauchy (related to the
special case $k=1$ in the Hadamard-Ostrowski-P\'olya inequality),
by using the norms of $A_0^{-1} A_j$ and $A_d^{-1} A_j$.
Fan, Lin and  Van Dooren~\cite{vandooren00} introduced a scaling based
on the norms of the matrix coefficients of a matrix quadratic polynomial.

The interest of tropical roots is not limited to theoretical bounds,
they 
can be used to perform scalings allowing one to improve the 
accuracy of the numerical 
computation of the eigenvalues of a matrix polynomial.
In~\cite{posta09}, the tropical polynomial of~\eqref{eq:tropdefini}
was initially introduced to refine  the results of~\cite{vandooren00}.
It was shown there (see also~\cite{meisam2011})
that the tropical roots of $\trop p$
can be used to perform scalings allowing one to improve the backward stability
of eigenvalue computations for a matrix polynomial $P$. 
In the special case of quadratic pencils,
this was confirmed by a work of Hamarling,
Munro, and Tisseur~\cite{hamarling}, who
implemented the tropical scaling as an option of {\tt quadeig},
and gave theoretical estimates showing that the tropical
scaling does reduce the backward error under some conditions.
We shall explain this scaling in more details 
in Section~\ref{sec-scaling}. We also explain
why the tropical scaling helps to
compute the eigenvalues of a matrix polynomial, as soon as all the ratios
$\zeta_{n(k-1)+l}/\alpha_k$ with $k=1,\ldots, d$ and
$\ell=1,\ldots, n$ are close to $1$.
We also illustrate this behavior numerically.

Some bounds on the modulus of the eigenvalues of $P$, 
involving the tropical roots of $\trop p$, also appeared in~\cite{posta09}
in the case in which $d=2$, and in~\cite{meisam2011} for the 
smallest and largest tropical roots when $d\geq 2$,
which may be seen as particular cases of the lower bounds of the present
paper.

Let us finally point out some further related works. 
Bini used in~\cite{bini96} what we call the tropical roots
(from the Newton polygon technique) to initialize
the Aberth method of computation of the roots of a scalar polynomial.
Also, Malajovich and Zubelli applied Ostrowski's analysis
to effective root solving~\cite{zubelli}. 
Finally, Bini, Noferini, and Sharify recently
proved in~\cite{binisharify} some location results 
by using the tropical roots of~\eqref{eq:tropdefini}
for the eigenvalues 
of a specific class of matrix polynomials 
such that the coefficient matrices are unitary up to some constants.
They also generalized the classical Pellet's theorem to 
the case of matrix polynomials, which involve
the norms of the $A_k^{-1} A_j$, except in the specific above class,
where they involve the norms of the $A_j$.
They also relate it to the Newton polygon method, thus to the tropical roots.
Moreover, Melman~\cite{melman} proved a different generalization
and some variations of Pellet's theorem, which are in general 
less costly since they involve the norms of the $A_j$ and $A_k^{-1}$
only (we need to compute $O(d)$ norms instead of $O(d^2)$, each of
these computations requiring at least an inversion or a matrix product).
 \label{pelletpage}
Also a recent work by Noferini, Sharify, Tisseur~\cite{SharifyTisseur}
improved these results. 
Note that the information obtained by Pellet type methods
is complementary and incomparable to the one obtained by the present 
log-majorization inequalities, see Remark~\ref{rem-pellet}.

The paper is organized as follows. 
We first recall a variation on Jensen formula due to Landau (Section~\ref{subsec-landau}) and extend it to the matrix case (Section~\ref{subsec-genlandau}), exploiting some technical results on matrix norms (Section~\ref{subsec-norm}).
In Section~\ref{sec:newtonpoly}, we recall the construction of the Newton polygon and of tropical roots. Then,
we derive in Section~\ref{subsec-generaltropical} a general estimate
for the modulus of the product of $\nk$ smallest eigenvalues of
a matrix polynomial in terms of tropical roots.
This leads to various explicit lower bounds,
stated in Section~\ref{sec:Main_results}, which extend
the Hadamard-Ostrowski-Poly\'a inequality to the matrix case.
These bounds
are proved in Section~\ref{sec:scalar_poly} in the scalar
case, and then in Section~\ref{sec:matrix_poly} in general.
An upper bound (reverse inequality) is stated in Section~\ref{sec-upperbound}
and proved in Section~\ref{sec-proof-upperbound}.
In Section~\ref{sec:examples}
we provide examples showing the tightness of the lower bounds.
In Section~\ref{sec-scaling}, we explain the tropical scaling 
and the interest of the above bounds in this context.

\section{Estimates of the eigenvalues of matrix polynomials using norms of matrices}\label{sec-generalinterest}

\subsection{An inequality of Landau}\label{subsec-landau}
An ingredient of our results is a bound on the 
modulus of the product of the $k$ smallest roots of a
polynomial in terms of its coefficients. It
is a consequence of Jensen's formula, 
derived by Landau in~\cite{landau}, building on an 
earlier observation of Lindel\"of~\cite{lindelof}.
\begin{lemma}[\cite{landau}] 
\label{lem:polynomialineq1new}
Let $\zeta_1, \dots, \zeta_d$ be the roots of a polynomial
$p(x)=\sum_{j=0}^d a_j x^j$, arranged by non-decreasing modulus, 
and assume that $a_0\neq 0$. For all $1\leq k\leq d$, we have
\begin{equation}
\label{eq:ineqmodulrootsnew}
\log{|\zeta_1\dots\zeta_k|}
\geq-\inf_{r>0}\frac{1}{2}\log\left(\sum_{j=0}^d \frac{|a_j|^2}{|a_0|^2} r^{2(j-k)}\right)\enspace.
\end{equation}
\end{lemma}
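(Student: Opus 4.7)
The natural route is through Jensen's formula, exactly as Landau originally used it. Factor $p(x)=a_d\prod_{i=1}^d(x-\zeta_i)$, fix some $r>0$, and let $m=m(r)$ denote the number of roots with $|\zeta_i|\leq r$. Applied to $p$ on the disk of radius $r$, Jensen's formula gives
\[
\log|a_0|\;=\;\sum_{i=1}^{m}\log\frac{|\zeta_i|}{r}\;+\;\frac{1}{2\pi}\int_0^{2\pi}\log|p(re^{i\theta})|\,d\theta.
\]
I would then bound the integral from above using concavity of $\log$ followed by Parseval's identity for the Fourier coefficients of $\theta\mapsto p(re^{i\theta})$:
\[
\frac{1}{2\pi}\int_0^{2\pi}\log|p(re^{i\theta})|\,d\theta
\;\leq\;
\tfrac{1}{2}\log\!\Bigl(\frac{1}{2\pi}\int_0^{2\pi}|p(re^{i\theta})|^2\,d\theta\Bigr)
\;=\;\tfrac{1}{2}\log\!\Bigl(\sum_{j=0}^d|a_j|^2 r^{2j}\Bigr).
\]
Substituting back and rearranging, this gives the key intermediate inequality
\[
\log|\zeta_1\cdots\zeta_m|\;\geq\;m\log r+\log|a_0|-\tfrac{1}{2}\log\!\Bigl(\sum_{j=0}^d|a_j|^2 r^{2j}\Bigr).
\]

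The remaining step, which I would flag as the only subtle point, is that the lemma asks for an inequality involving the $k$ smallest roots valid for \emph{every} $r>0$, whereas Jensen's formula produces $m=m(r)$, the actual number of roots in the disk. I would handle the three cases separately. When $m=k$, the bound coincides with the announced one after factoring out $r^{2k}$ and dividing by $|a_0|^2$. When $m<k$, I would write $\log|\zeta_1\cdots\zeta_k|=\log|\zeta_1\cdots\zeta_m|+\sum_{i=m+1}^k\log|\zeta_i|$ and use that $|\zeta_i|\geq r$ for $i>m$ to gain a factor $(k-m)\log r$; when $m>k$, I would instead subtract $\sum_{i=k+1}^m\log|\zeta_i|$ and use $|\zeta_i|\leq r$ for $i\leq m$ to lose at most $(m-k)\log r$. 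In both cases the surplus/deficit combines with $m\log r$ to yield exactly $k\log r$, so the same inequality
\[
\log|\zeta_1\cdots\zeta_k|\;\geq\;-\tfrac{1}{2}\log\!\Bigl(\sum_{j=0}^d\frac{|a_j|^2}{|a_0|^2} r^{2(j-k)}\Bigr)
\]
holds for an arbitrary $r>0$. Taking the supremum over $r$ (i.e.\ the infimum of the right-hand side of the lemma) concludes the proof. The main obstacle is really only this bookkeeping around $m(r)\neq k$; the analytic content sits entirely in the Jensen--Parseval estimate above.
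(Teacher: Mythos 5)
Your proposal is correct and follows essentially the same route as the paper: Jensen's formula, then the concavity-of-$\log$ plus Parseval estimate for the mean of $\log|p(re^{i\theta})|$. The only difference is presentational — where the paper passes from the Jensen equality (with $m(r)$ roots in the disk) to the inequality for arbitrary $k$ and $r$ with a bare ``it follows that,'' you spell out the three cases $m=k$, $m<k$, $m>k$; your bookkeeping is accurate and fills in exactly the step the paper leaves implicit.
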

We include the short proof, as its idea will be used in the
extension to the matrix case.
\begin{proof}
The formula of Jensen~\cite{Jensen1899} shows that if $\zeta_1,\dots,\zeta_k$ are the roots of $p(z)$ in
the closed disk of $\C$ of radius $r$, counted with multiplicities, then
\[
\log|\zeta_1\dots\zeta_{k}|= k\log r+\log |p(0)|-\frac{1}{2\pi}\int_0^{2\pi} \log{|p(re^{i\theta})|}d\theta \enspace .
\]
It follows that, for all $r>0$, and $k=1,\ldots, d$:
\begin{equation}
\label{eq:principeq11}  
\log|\zeta_1\dots\zeta_{k}|\geq k\log r+\log |p(0)|-\frac{1}{2\pi}\int_0^{2\pi} \log{|p(re^{i\theta})|}d\theta \enspace.
\end{equation}
Using the comparison between the geometric and the $L^2$ mean,
together with Parseval's identity, we get
\begin{eqnarray*}
\lefteqn{\frac{1}{2\pi}\int_0^{2\pi}\log|p(re^{i\theta})|d\theta \leq 
\frac{1}{2}\log\big( \frac{1}{2\pi}\int_0^{2\pi}|p(re^{i\theta})|^2d\theta\big)}\\
&=& \frac{1}{2}\log\big( \frac{1}{2\pi}\int_0^{2\pi}|\sum_{j=0}^d a_jr^je^{ij\theta}|^2d\theta\big)
= \frac{1}{2}\log\big( \sum_{j=0}^d|a_j|^2r^{2j}\big) \enspace .
\end{eqnarray*}
Gathering this inequality with~\eqref{eq:principeq11} yields
\[ \log|\zeta_1\dots\zeta_{k}|\geq k\log r+\log |a_0|-
\frac{1}{2}\log\big( \sum_{j=0}^d|a_j|^2r^{2j}\big) \enspace .
\]
Since this holds for all $r>0$, this shows the inequality of the lemma.
\end{proof}
\subsection{Preliminary results on matrix norms}\label{subsec-norm}
In order to generalize Lemma~\ref{lem:polynomialineq1new}
to matrix polynomials, and to derive
effective lower bounds for eigenvalues, we
need to introduce technical assumptions on matrix norms,
like the following one:
\begin{itemize}
\myitem\label{assump1}
 $|\det A|\leq \|A\|^n$, for all $A\in\C^{n\times n}$.
\end{itemize}
The
\NEW{normalized Frobenius norm},
\[ \|A\|_\nf:= \left(\frac{1}{n}\sum_{i,j=1}^n |A_{ij}|^2\right)^{\frac{1}{2}},\quad
\forall A\in \C^{n\times n}\enspace ,\]
will be of special interest, as some of our estimates rely on $L_2$
methods. 
Therefore, the next assumption will 
also be considered:
\begin{itemize}
\stepcounter{enumi}
\myitem\label{assump2}
There exists $Q,Q'\in\C^{n\times n}$, such that 
$\det Q=\det Q'=1$ and $\|QAQ'\|_\nf\leq \|A\|$, for all $A\in\C^{n\times n}$.
\end{itemize}
We next show that Assumption~\ref{assump2} implies Assumption~\ref{assump1},
and that a number of commonly used norms satisfy Assumption~\ref{assump1}
or Assumption~\ref{assump2}.

For all $p\in[1,\infty]$, and $n\geq 1$, we shall denote by $\|\cdot\|_p$ the
$\ell^p$ norm of $\C^n$:
$\|v\|_p:=(\sum_{i=1}^n |v_i|^p)^{1/p}$ for $p<\infty$ and
$\|v\|_\infty:=\max_{i=1,\ldots, n} |v_i|$.
In particular, 
$\|\cdot\|_2$ is the Euclidean norm.
The norm on the space of matrices $\C^{n\times n}$ induced by the
norm $\|\cdot\|_p$ on $\C^n$ 
will also be denoted by $\|\cdot\|_p$ (the same norm is used for the domain and the range of $A$). In particular the norm 
$\|\cdot\|_2$  on $\C^{n\times n}$ is the spectral norm.
Moreover, for all $p\in[1,\infty)$, and $n\geq 1$, we shall 
denote by $\|\cdot\|_{*p}$ the following normalized Schatten $p$-norm 
on the space of matrices $\C^{n\times n}$:
$\|A\|_{*p}:= (\frac{1}{n}\sum_{i=1}^n \sigma_i^p)^{1/p}$,
where $\sigma_1,\ldots,\sigma_n$ are the singular values of $A$
(the eigenvalues of $\sqrt{A^*A}$,
where $A^*$ denotes the conjugate transpose of $A$).
Then, the normalized Schatten $2$-norm coincides with the normalized 
Frobenius norm. Recall that the (unnormalized) Schatten $1$-norm is
also called the trace norm or the Ky Fan $n$-norm.
We shall also denote by $\|A\|_{*\infty}:=\max_{i=1,\ldots, n} \sigma_i$ the
Schatten $\infty$-norm, which coincides with the spectral norm.

\begin{property}\label{propertyfrob}
The normalized Frobenius norm satisfies Assumption~\ref{assump1}.
Therefore, Assumption~\ref{assump2} implies Assumption~\ref{assump1}.
\end{property}
\begin{proof}
From Hadamard's inequality, we get that 
$|\det A|\leq \|A^{(1)}\|_2\cdots\|A^{(n)}\|_2$,
for all $A\in \C^{n\times n}$,
where $A^{(j)}$ denotes the $j$-th column of $A$.
Then, since the geometric mean is less than or equal to the arithmetic mean (or 
simply by the concavity of the logarithm), we get that 
$|\det A|^{2/n}\leq \frac{1}{n} (\|A^{(1)}\|_2^2+\dots+\|A^{(n)}\|_2^2)=\|A\|_\nf^2$,
which implies that the normalized Frobenius norm 
satisfies Assumption~\ref{assump1}.
Since for all $Q,Q'\in\C^{n\times n}$ such that $\det Q=\det Q'=1$,
we get $|\det A|=|\det (QA Q')|\leq \|Q A Q'\|_\nf^n$ for all $A\in \C^{n\times n}$,
Assumption~\ref{assump2} implies Assumption~\ref{assump1}.
\end{proof}

\begin{property}
For all $p\in [1,\infty ]$, the norm $\|\cdot\|_p$ on $\C^{n\times n}$
satisfies~\ref{assump2}.
\end{property}
\begin{proof}
Indeed, let us denote by $\|A\|_{p,q}$ the norm induced by
the norms $\|\cdot\|_p$ and $\|\cdot\|_q$ on the range and 
domain $\C^n$ of $A$ respectively, which means that 
$\|A\|_{p,q}=\max\{ \|A v\|_q/\|v\|_p,\; v\in\C^n, v\neq 0\}$.
Since (for all $v\in \C^n$) $p\in [1,\infty]\mapsto \|v\|_p\in\R^+$
is a nonincreasing map, we get that 
$\|A\|_{p,q}\leq \|A\|_{p',q'}$ when $p\leq p'$ and $q\geq q'$.
In particular $\|A\|_p=\|A\|_{p,p}\geq \|A\|_{2,\infty}$ when
$2\leq p$, and $\|A\|_p=\|A\|_{p,p}\geq \|A\|_{1,2}$  when $p\leq 2$.
In addition,  it is easy to show that
$\|A\|_{2,\infty}= \max\{\|(A^*)^{(i)}\|_2, i=1,\dots, n\}$.
Since the last expression is greater or equal to
$\|A\|_\nf$, we deduce that $\|A\|_p\geq \|A\|_{2,\infty}\geq \|A\|_\nf$
for all $A\in\C^{n\times n}$ and $2\leq p\leq \infty$.
Similarly, $\|A\|_{1,2}= \max\{\|A^{(i)}\|_2, i=1,\ldots n\}
\geq \|A\|_\nf$, hence $\|A\|_p\geq \|A\|_{1,2}\geq \|A\|_\nf$
for all $A\in\C^{n\times n}$ and $1\leq p\leq 2$.
This shows that $\|\cdot\|_p$ satisfies~\ref{assump2} with $Q=Q'=I$ the identity 
matrix.
\end{proof}

This result implies that all norms induced 
by the norm $\|v\|_d=\|Q v\|_p$ in the domain of $A$ and
the norm $\|v\|_r=\|Q' v\|_p$ in the range of $A$ with the same $p$,
but possibly different matrices $Q,Q'$ such that $\det Q=\det Q'=1$,
satisfy~\ref{assump2}.

\begin{property}
If $\|\cdot\|$ is the norm on $\C^{n\times n}$ induced by a norm
on $\C^n$ (the same for the domain and the range of matrices), then it 
satisfies~\ref{assump1}.
\end{property}
\begin{proof}
Since $\det A$ is the product of the eigenvalues of $A$ 
counted with multiplicities, we get that
$|\det A|\leq \rho(A)^n$, where $\rho(A)$ is the 
spectral radius of $A$. Since $\rho(A)\leq \|A\|$ for all $A\in\C^{n\times n}$
and all induced norms $\|\cdot\|$ on the space of matrices, we get 
the result.
\end{proof}

\begin{property}\label{prop-schatten}
The normalized Schatten $p$-norm $\|\cdot\|_{*p}$
on $\C^{n\times n}$ satisfies~\ref{assump1}, for all $p\in [1,\infty ]$.
It  satisfies~\ref{assump2} if and only if $p\geq 2$.
Moreover, for $p\in [1,2)$, the least constant $\eta$ such that 
$\eta \|\cdot\|_{*p}$ satisfies~\ref{assump2} is given by $\eta=n^{1/p-1/2}>1$.
\end{property}
\begin{proof}
Since $|\det A|$ is the product of the singular values of $A$ 
counted with multiplicities, we get that
$|\det A|^p= \sigma_1^p\cdots \sigma_n^p$, and using
that the geometric mean is less than or equal to the arithmetic mean,
we obtain that 
$|\det A|^{p/n}\leq \frac{1}{n} (\sigma_1^p+\cdots + \sigma_n^p)=\|A\|_{*p}^p$,
which implies that the normalized Schatten $p$-norm 
satisfies Assumption~\ref{assump1}.

We have that $p\mapsto \|A\|_{*p}$ is a nondecreasing map. Hence,
when $p\in [2,\infty ]$, $\|A\|_{*p}\geq \|A\|_{*2}=\|A\|_\nf$, 
for all $A\in\C^{n\times n}$, which implies that $\|\cdot\|_{*p}$
satisfies~\ref{assump2} when $p\geq 2$.
Let $\eta>0$ be the least constant such that 
$\eta \|\cdot\|_{*p}$ satisfies~\ref{assump2} and let us show that
$\eta=n^{1/p-1/2}$ when $p<2$.
This will implies in particular that  $\eta>1$ hence $\|\cdot\|_{*p}$ 
does not satisfy~\ref{assump2} for $p<2$, which will finishes the proof
of the equivalence ``$\|\cdot\|_{*p}$ satisfies~\ref{assump2} 
if and only if $p\geq 2$''.
Since $n^{1/p} \|\cdot\|_{*p}$ is the unnormalized 
Schatten norm, which is nonincreasing with respect to $p$,
we get that, for all $p\in [1,2)$, and $A\in\C^{n\times n}$,
$n^{1/2} \|A\|_\nf\leq n^{1/p} \|A\|_{*p}$, which implies that
$\eta\leq n^{1/p-1/2}$.
Let us fix $p$, $Q, Q'\in\C^{n\times n}$
such that $\det Q=\det Q'=1$ and 
$\|Q A Q'\|_\nf \leq \eta  \|A\|_{*p}$,  for all $A\in\C^{n\times n}$.
For all $i=1,\ldots, n$, let us consider the matrix $A$ whose 
entries are all zero but the entry $ii$ equal to $1$.
Then, the singular values of $A$ are all zero except one which is equal to $1$,
so that $\|A\|_{*p}=(1/n)^{1/p}$.
Moreover $Q AQ'=Q^{(i)}Q'_{(i)}$, where $Q'_{(i)}$ denotes the $i$-th raw of 
$Q'$. Hence $\|Q AQ'\|_\nf=(1/n)^{1/2} \|Q^{(i)}\|_2 \|(Q'^*)^{(i)}\|_2$.
From $\|Q A Q'\|_\nf \leq \eta  \|A\|_{*p}$, we deduce that
$\|Q^{(i)}\|_2 \|(Q'^*)^{(i)}\|_2\leq \eta (1/n)^{1/p-1/2}$.
Since $\det Q=\det Q'=1$, we get using Hadamard's inequality, 
$1\leq \|Q^{(1)}\|_2 \|(Q'^*)^{(1)}\|_2\cdots\|Q^{(n)}\|_2 \|(Q'^*)^{(n)}\|_2
\leq ( \eta (1/n)^{1/p-1/2})^n$, which shows that
$\eta \geq n^{1/p-1/2}$.
We deduce that $\eta=n^{1/p-1/2}>1$ for $p>2$, which completes the proof.
\end{proof}

\subsection{Generalization of the inequality of Landau to matrix polynomials}
\label{subsec-genlandau}
The following generalization of Lemma~\ref{lem:polynomialineq1new} will be a key tool to establish
lower bounds for the eigenvalues of matrix polynomials.
\begin{lemma}
\label{lem:newmain}
Consider the matrix polynomial $P$ with degree $d$ defined 
in~\eqref{matrixpoly},
and let $\zeta_1,\dots,\zeta_{nd}$ denote its eigenvalues,
arranged by non-decreasing modulus.
Assume that $\|\cdot\|$ is any norm on the space of matrices 
satisfying~\ref{assump1}, that $\det A_0\neq0$ and let $c=\frac{|\det A_0|}{\|A_0\|^n}$.
Then, for $\nk\in\{1,\ldots, nd\}$, we have
\begin{equation}
\label{eq:matrixineq1}
\log |\zeta_1\dots\zeta_{\nk}|
\geq
\log c-n \inf_{r>0} \log\left(\sum_{j=0}^d \frac{\|A_j\|}{\|A_0\|}r^{j-\nkn } \right)\enspace.
\end{equation}
When $\|\cdot\|$ satisfies~\ref{assump2},
the previous bound can be improved as follows
\begin{equation}
\label{eq:matrixineq2}
\log |\zeta_1\dots\zeta_{\nk}|
\geq 
\log c -n \inf_{r>0} \frac{1}{2}\log\left(\sum_{j=0}^d \frac{\|A_j\|^2}{\|A_0\|^2}r^{2(j-\nkn )} \right)\enspace.
\end{equation}
\end{lemma}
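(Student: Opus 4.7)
The strategy is to reduce to the scalar case by applying Lemma~\ref{lem:polynomialineq1new} (or rather the idea of its proof) to the characteristic polynomial $q(\lambda) := \det P(\lambda)$, which is a scalar polynomial of degree at most $nd$ whose roots, counted with multiplicity, are precisely the eigenvalues $\zeta_1,\dots,\zeta_{nd}$ of $P$. Note that $q(0)=\det A_0\neq 0$ by hypothesis, so the Jensen inequality~\eqref{eq:principeq11} applies: for every $r>0$ and $1\leq k\leq d$,
\[
\log|\zeta_1\cdots\zeta_{nk}|\geq nk\log r+\log|\det A_0|-\frac{1}{2\pi}\int_0^{2\pi}\log|\det P(re^{i\theta})|\,d\theta.
\]

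To prove~\eqref{eq:matrixineq1}, the plan is to bound the integral from above using Assumption~\ref{assump1}. On the circle $|z|=r$, submultiplicativity of the determinant versus the norm gives $|\det P(z)|\leq \|P(z)\|^n\leq \bigl(\sum_{j=0}^d\|A_j\|r^j\bigr)^n$. Taking logarithms and integrating (the bound is uniform in $\theta$) yields
\[
\frac{1}{2\pi}\int_0^{2\pi}\log|\det P(re^{i\theta})|\,d\theta\leq n\log\Bigl(\sum_{j=0}^d\|A_j\|r^j\Bigr).
\]
Substituting this back, factoring out $\|A_0\|$ inside the logarithm, and regrouping the $nk\log r$ term as $-n\log r^{-k}$ brings the right-hand side into the form $\log c -n\log\bigl(\sum_j \tfrac{\|A_j\|}{\|A_0\|}r^{j-k}\bigr)$; since $r>0$ is arbitrary, taking the infimum gives~\eqref{eq:matrixineq1}.

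For the sharper bound~\eqref{eq:matrixineq2} I would mimic the $L^2$ refinement used in the proof of Lemma~\ref{lem:polynomialineq1new}. Assumption~\ref{assump2} provides matrices $Q,Q'$ of determinant $1$ such that $\|QAQ'\|_\nf\leq\|A\|$ for all $A$. Setting $\widetilde{P}(\lambda):=QP(\lambda)Q'=\sum_j(QA_jQ')\lambda^j$, we have $\det\widetilde{P}=\det P$, so the eigenvalues and the left-hand side are unchanged. Using Property~\ref{propertyfrob} (which is valid for $\|\cdot\|_\nf$) we get $|\det\widetilde{P}(z)|\leq\|\widetilde{P}(z)\|_\nf^n$, whence $\log|\det P(z)|\leq n\log\|\widetilde{P}(z)\|_\nf$. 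Then, by the concavity of $\log$ (geometric-to-quadratic mean on the circle),
\[
\frac{1}{2\pi}\int_0^{2\pi}\log|\det P(re^{i\theta})|\,d\theta\leq\frac{n}{2}\log\Bigl(\frac{1}{2\pi}\int_0^{2\pi}\|\widetilde{P}(re^{i\theta})\|_\nf^2\,d\theta\Bigr).
\]
Expanding $\|\widetilde{P}(z)\|_\nf^2$ entrywise and applying Parseval's identity coefficient by coefficient (the $L^2$ orthogonality of $e^{ij\theta}$) makes the cross terms vanish and gives $\frac{1}{2\pi}\int\|\widetilde{P}(re^{i\theta})\|_\nf^2 d\theta=\sum_j\|QA_jQ'\|_\nf^2 r^{2j}\leq\sum_j\|A_j\|^2 r^{2j}$. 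Plugging back into the Jensen inequality and renormalizing by $\|A_0\|$ exactly as before yields~\eqref{eq:matrixineq2} after taking the infimum over $r$.

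The only genuine ingredient beyond the scalar argument is the passage from a \emph{matrix} polynomial to a scalar one via $\det P$, and the main obstacle is therefore the clean control of $\log|\det P(re^{i\theta})|$ on circles by a norm of $P$: this is precisely what Assumptions~\ref{assump1} and~\ref{assump2} are designed for, and the twist by $Q,Q'$ (which preserves $\det P$) is the trick that lets the Frobenius Parseval computation be carried out even though the ambient norm $\|\cdot\|$ need not itself admit an $L^2$ expansion.
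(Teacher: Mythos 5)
Your proposal is correct and follows essentially the same route as the paper's proof: apply the Jensen--Landau inequality~\eqref{eq:principeq11} to $\det P(\lambda)$, bound $|\det P(re^{i\theta})|$ by $\|P(re^{i\theta})\|^n$ under~\ref{assump1}, and under~\ref{assump2} pass to $QP(\lambda)Q'$ so that the normalized Frobenius norm, the geometric-to-$L^2$ mean comparison, and an entrywise Parseval identity give the sharper $L^2$ bound. No substantive differences from the paper's argument.
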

\begin{proof}
From Inequality~\eqref{eq:principeq11}
applied to $\detp(\lambda)=\det P(\lambda)$, we get, for all $r>0$,
\begin{equation}
\label{thm:jensenmatx}
\log|\zeta_1\dots\zeta_{\nk}|\geq \nk\log r+\log |\det A_0|-\frac{1}{2\pi}\int_0^{2\pi} \log{|\det P(re^{i\theta})|}d\theta \enspace .
\end{equation}
Since $\|\cdot\|$ satisfies~\ref{assump1}, we have 
\begin{equation}\label{firstineq-nonfrob}
|\det P(re^{i\theta})| \leq
\|P(re^{i\theta})\|^n =
\|\sum_{j=0}^dA_j(re^{i\theta})^j\|^n 
\leq (\sum_{j=0}^d\|A_j\|r^j)^n\enspace,
\end{equation}
for all $\theta\in [0,2\pi]$.
Gathering~\eqref{firstineq-nonfrob} with~\eqref{thm:jensenmatx}, we
obtain~\eqref{eq:matrixineq1}.

Assume now that $\|\cdot\|$ satisfies~\ref{assump2} with some matrices
$Q,Q'$ such that $\det Q=\det Q'=1$, and let us prove \eqref{eq:matrixineq2}.
Since from Property~\ref{propertyfrob},
the normalized Frobenius norm satisfies~\ref{assump1}, 
we get that $|\det P(re^{i\theta})|= |\det (QP (re^{i\theta}) Q')|\leq
\|QP(re^{i\theta})Q'\|_\nf^n$.
Now using the comparison between geometric and $L^2$ means, we deduce
\begin{eqnarray}
\frac{1}{2\pi}\int_0^{2\pi}\log |\det P(re^{i\theta})|d\theta &\leq &
n\frac{1}{2 \pi}\int_0^{2\pi}\log \| QP(re^{i\theta})Q'\|_\nf d\theta \nonumber\\
&\leq & 
\frac{n}{2}\log\left(\frac{1}{2\pi}\int_0^{2\pi}\|QP(re^{i\theta})Q'\|_\nf^{2}
d\theta\right)\enspace.\label{firstineq-frob}
\end{eqnarray}
From the formula of the normalized Frobenius norm, we get by 
applying Parseval's identity to each coordinate $(QP(re^{i\theta})Q')_{\ell m}$
\begin{eqnarray*}
\lefteqn{\frac{1}{2\pi}\int_0^{2\pi}\|QP(re^{i\theta})Q'\|_\nf^{2}d\theta
= \frac{1}{n}\sum_{\ell,m=1}^n \left( \frac{1}{2\pi}\int_0^{2\pi}
|(QP(re^{i\theta})Q')_{\ell,m}|^2d\theta \right)} \\
&=& \frac{1}{n}\sum_{\ell,m=1}^n\left(\sum_{j=0}^d (| (QA_jQ')_{\ell,m} | r^j)^2 \right)
=\sum_{j=0}^d (\|QA_jQ'\|_\nf^2r^{2j})
\leq\sum_{j=0}^d (\|A_j\|^2r^{2j})\enspace.
\end{eqnarray*}
Gathering this inequality with Inequalities~\eqref{firstineq-frob} 
and~\eqref{thm:jensenmatx}, we obtain~\eqref{eq:matrixineq2}.
\end{proof}

\section{Tropical polynomials and numerical Newton polygons}
\subsection{Preliminary results on tropical roots}
\label{sec:newtonpoly}
\label{sec:tropical_polynomials}
We recall here basic results on tropical polynomials of one variable.
See for instance~\cite{bcoq,viro,itenberg} for more 
background on tropical polynomials from different perspectives.

Let $\rmax$ denotes the set $\R\cup\{-\infty\}$. 
A (max-plus) {\em tropical polynomial} $f$ is a function
of a variable $x\in \rmax$ of the form
\begin{equation}
\label{e-def-f}
f(x) = \max_{0\leq j\leq d} (f_j +j x)  \enspace,
\end{equation}
where $d$ is an integer, and $f_0,\dots,f_d$ are given elements of 
$\rmax$. 
We say that $f$ is of {\em degree $d$} if $f_d\neq -\infty$.
We shall assume that at least one of the coefficients $f_0,\dots,f_d$ is finite (i.e., that $f$ is not the tropical ``zero polynomial'').
Then, $f$ is a real valued convex function, piecewise affine, with integer slopes.

Cuninghame-Green and Meijer showed~\cite{cuning80} that the analogue of the fundamental theorem of algebra holds in the tropical setting, i.e., $f(x)$ can be written uniquely as 
\[ 
f(x)= f_{d}+\sum_{j=1}^{d} \max(x,\alpha_j)\enspace,
\]
where $\alpha_1\leq\cdots\leq \alpha_d\in\rmax$.
The numbers  $\alpha_1,\ldots,\alpha_d$ are called the \NEW{tropical roots}. The finite tropical roots can be checked to be the
points at which the maximum in the expression~\eqref{e-def-f} of $f(x)$ is attained at least twice, whereas $-\infty$ arises as a tropical root if 
$f_0=-\infty$. 
The \NEW{multiplicity} of a root $\alpha$ is defined
as the cardinality of the set 
$\set{j\in \{1,\dots, d\}}{\alpha_{j} = \alpha }$. 

The multiplicity of a finite root $\alpha$ can be checked to coincide
with the variation of the derivative of the map $f$ at point $\alpha$,
whereas the multiplicity of the root $-\infty$ is given by
$\inf\{j\mid f_j \neq -\infty\}$ or by the slope of the map $f$ at $-\infty$.
The notion of tropical roots is an elementary special case of the notion of tropical variety which has arisen recently in tropical geometry~\cite{itenberg}.

The tropical roots can be computed by the following variant
of the classical Newton polygon construction.
Define the {\em Newton polygon} $\Delta(f)$ 
of $f$ to be the upper boundary of the convex hull of
the region
\[\set{(j,\lambda)\in\N\times\R}
{\lambda \leq f_j ,\;  0\leq j\leq d}  \subset \R^2  \enspace .
\]
The latter region is the hypograph of the map $\tilde{f}:\R\to\R\cup\{-\infty\}$
such that $\tilde{f}(j)=f_j$ for $j\in\{0,\ldots, d\}$ and 
$\tilde{f}(x)=-\infty$ otherwise.
Hence, the Newton polygon coincides with the graph of the
concave hull of $\tilde{f}$. Let us denote by $\hat{f}$ this
concave hull and by $\hat{f}_j$ its value at an integer point $j$. 
Then the Newton polygon of $f$
consists of (linear) segments relying (or passing through)
the points $(j,\hat{f}_j)$, $j\in\{0,\ldots, d\}$.

The following result relies on standard Legendre-Fenchel duality.
A proof can be found in \cite[Proposition 2.10]{abg04b}
(for min-plus polynomials instead of max-plus polynomials).

\begin{proposition}\label{correspondance}
There is a bijection between the set of finite tropical roots 
of $f$ and the set of segments of the Newton polygon $\Delta(f)$:
the tropical root corresponding to a segment is the opposite
of its slope, and the multiplicity of this root is the
length of this segment (measured by the difference of
the absciss\ae\ of its endpoints).
\end{proposition}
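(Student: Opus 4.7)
The plan is to realize $f$ as the Legendre transform of the concave piecewise-affine function whose graph is the Newton polygon, and to derive the bijection from this duality.

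First, I would introduce $\hat f:[j_0,d]\to\R$, where $j_0$ is the smallest index with $f_{j_0}\neq -\infty$, as the concave piecewise-affine function whose graph is exactly $\Delta(f)$. By construction $\hat f(j)\geq f_j$ for every integer $j\in\{j_0,\ldots,d\}$, with equality precisely at the vertices of $\Delta(f)$. The first key step is the identity
\[
f(x)\;=\;\max_{0\leq j\leq d}(f_j+jx)\;=\;\max_{t\in[j_0,d]}(\hat f(t)+tx),\qquad\forall x\in\R.
\]
The inequality $\leq$ is immediate from $\hat f(j)\geq f_j$; the reverse inequality follows because the concave piecewise-affine map $t\mapsto \hat f(t)+tx$ attains its maximum on the compact interval $[j_0,d]$ at a vertex of its graph, and every such vertex is a point $(j^\star,f_{j^\star})$ with $j^\star\in\{j_0,\ldots,d\}$.

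Next, I would analyse the argmax. For each $x\in\R$, the argmax of $t\mapsto \hat f(t)+tx$ on $[j_0,d]$ is a closed interval $[t_-(x),t_+(x)]$. Standard subdifferential calculus for the pointwise max of affine functions gives $\partial f(x)=\operatorname{conv}\{j:f_j+jx=f(x)\}$; the identity above, together with the fact that the vertices of $\Delta(f)$ are among the points $(j,f_j)$, lets me identify this convex hull with $[t_-(x),t_+(x)]$. Hence $f$ is non-differentiable at $x$ iff $t_-(x)<t_+(x)$, iff $\hat f$ is affine on $[t_-(x),t_+(x)]$ with slope $-x$ (the only slope for which adding $tx$ produces a constant function), iff $[t_-(x),t_+(x)]$ is a segment of $\Delta(f)$ of slope $-x$. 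Conversely, given any segment of $\Delta(f)$ with slope $s$ and horizontal extent $[a,b]$, setting $x=-s$ makes $t\mapsto \hat f(t)+tx$ constant on $[a,b]$; the strict monotonicity of the slopes of $\Delta(f)$ (from concavity of $\hat f$) guarantees that this constant value is the global maximum, so $[t_-(x),t_+(x)]=[a,b]$. This exhibits the bijection: each non-differentiability point $x$ of $f$ corresponds to the unique edge of $\Delta(f)$ of slope $-x$, and conversely.

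For the multiplicity statement, the paper identifies the multiplicity of a finite tropical root $x$ with the jump $f'(x^+)-f'(x^-)$ of the derivative of $f$ at $x$, which by the preceding analysis equals $t_+(x)-t_-(x)$, i.e., the horizontal length of the corresponding segment. The main technical point to pin down cleanly will be the identification $\partial f(x)=[t_-(x),t_+(x)]$: one must verify that passing from the discrete maximum over $\{0,\ldots,d\}$ to the continuous maximum over $[j_0,d]$ neither loses nor creates maximizers outside the convex hull of the original integer maximizers. This is precisely where the concavity of $\hat f$ and the fact that its vertices lie on the graph of $j\mapsto f_j$ do the real work; once this is established, the rest follows from elementary convex-analytic manipulations.
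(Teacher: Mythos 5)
Your argument is correct, and it is essentially the proof the paper points to: the paper gives no proof of its own for this proposition but cites \cite{abg04b} and notes that the result ``relies on standard Legendre--Fenchel duality argument,'' which is exactly what you carry out by realizing $f$ as the conjugate of the concave hull $\hat f$ and matching $\partial f(x)=\operatorname{conv}\{j: f_j+jx=f(x)\}$ with the argmax interval $[t_-(x),t_+(x)]$. The one step you flag as needing care -- that the endpoints $t_\pm(x)$ are vertices of $\Delta(f)$ (or endpoints of its domain) and hence discrete maximizers -- is handled correctly by your observation that vertices of the convex hull lie on the graph of $j\mapsto f_j$, so no gap remains.
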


\begin{remark}\label{tropical_compute}\rm
The Graham scan algorithm~\cite{Graham72} allows one to compute the 
convex hull of a finite set of points by making $O(n)$ arithmetical operations 
and comparisons, provided that the given set of points is already arranged by increasing absciss\ae, which is the case in the present setting.
It follows that the tropical roots, counted with
multiplicities, can be computed in linear time (see also~\cite[Proposition 1]{posta09}). 
\end{remark}

In particular, the maximal tropical root is given by
\[
\alpha_d = \max_{0\leq j\leq d} \frac{f_j-f_d}{d-j} \enspace .
\]

\begin{example}\label{ex1} \rm 
Consider
\[
f(x)= \max(0,1+x,6+2x,4+4x,9+8x,5+10x,1+16x)  \enspace.
\]
The graph of $f$ and the Newton polygon of $f$ are shown
in Figure~\ref{fig:newtongraph}.  The tropical roots are $-3$, $-1/2$, and $1$, 
with respective multiplicities $2$, $6$ and $8$.
\begin{figure}[htbp]
\begin{minipage}{0.49\linewidth}
\begin{flushleft}
\includegraphics[scale=0.33]{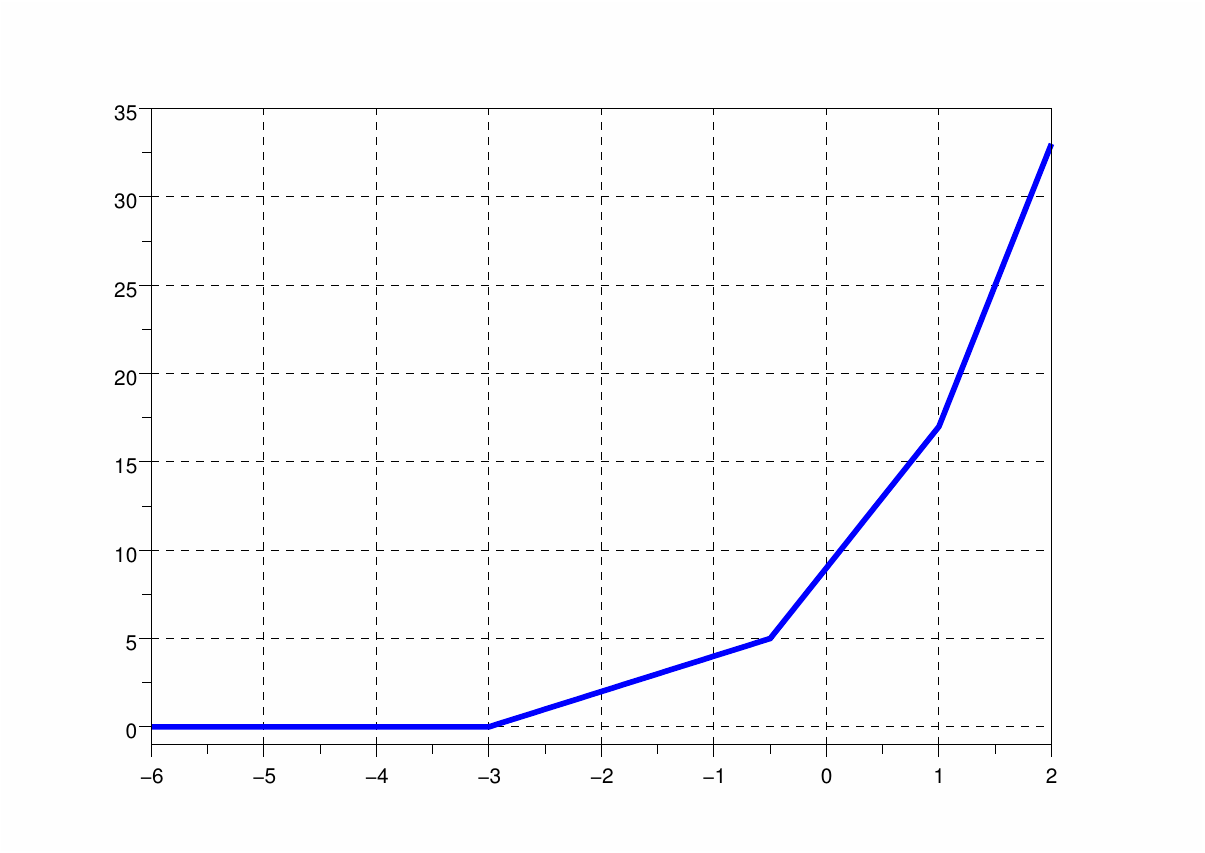}
\end{flushleft}
\end{minipage}
\begin{minipage}{0.49\linewidth}
\centering
\includegraphics[scale=0.33]{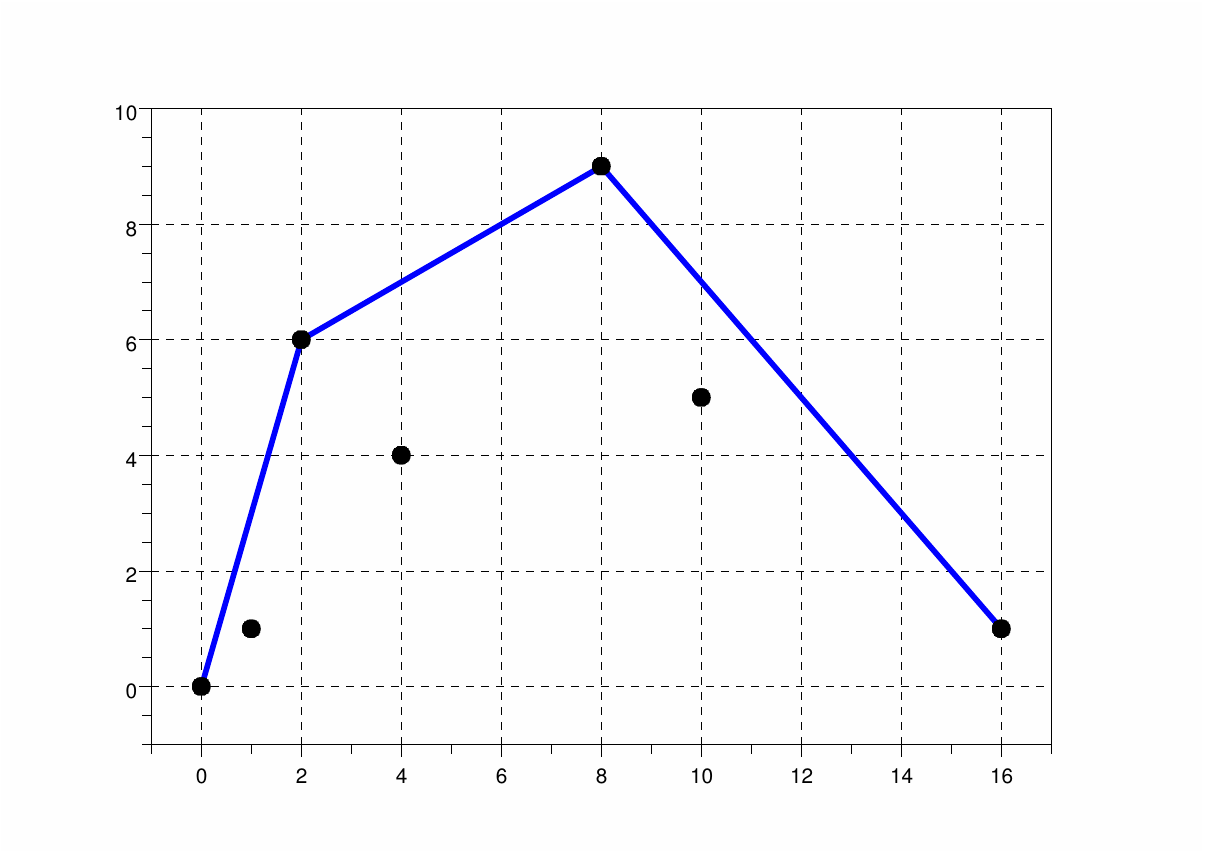}
\end{minipage}
\caption{Graph of the tropical polynomial function $f$ on left, and of its Newton polygon on right.}
\label{fig:newtongraph}
\end{figure}

\end{example}

The notion of root also applies with trivial changes to the ``max-times'' model
of the tropical structure, in which polynomial functions now have
the form
\[
\trop p(x)= \max_{0\leq j\leq d} a_j x^j \enspace ,
\]
where $a_0,\dots,a_d$ are nonnegative numbers, and the variable
$x$ now takes nonnegative values.
Then, the {\em tropical roots} of $\trop p(x)$ are, by definition, the exponentials of the tropical roots of its log-exp transformation
$f(x):= \log \trop p(\exp(x))=\max_{0\leq j\leq d}(\log a_j+jx)$. 

In the sequel we shall consider max-times polynomials associated with
usual scalar or matrix polynomials.
We shall need the following result which follows from the above definitions
and properties.
\begin{proposition}
\label{prop:coefbnd}
Let $\trop p(x)= \max_{0\leq j\leq d} a_j x^j$ be a max-times 
polynomial of degree $d$, with $a_j\geq 0$, $j=0,\ldots d$.
Assume that $a_0\neq 0$.  Let
 $0< \alpha_1\leq \cdots\leq \alpha_d$ denote the tropical roots of $\trop p$
arranged in non-decreasing order. Then,
\[
a_j\leq a_0\prod_{\ell=1}^j\alpha_\ell^{-1},\quad j=0,\ldots d \enspace.
\]
Moreover, let $k$ be the abscissa of a vertex of the Newton polygon of $p$,
then $a_k> 0$ and 
\[
 \begin{array}{ll}
a_j\leq a_k\prod_{\ell=k+1}^j\alpha_\ell^{-1},& \text{for} \; j=k,\ldots d \enspace,\\
a_j\leq a_k\prod_{\ell=j+1}^k\alpha_\ell,& \text{for} \; j=0,\ldots k \enspace.
\end{array}
\]
\end{proposition}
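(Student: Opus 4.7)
The plan is to pass to logarithmic coordinates and read the three inequalities directly off the Newton polygon. Set $f_j := \log a_j$ (with $\log 0 = -\infty$) and consider the max-plus polynomial $f(x) := \log \trop p(e^x) = \max_{0\leq j\leq d}(f_j + jx)$. Since $a_0 \neq 0$ one has $f_0 > -\infty$, so every tropical root of $f$ is finite and equal to $\log \alpha_\ell$, for $\ell = 1,\ldots,d$. By its very definition, the Newton polygon $\Delta(f)$ is a concave piecewise linear function on $[0,d]$ whose graph lies weakly above every point $(j, f_j)$.

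By Proposition~\ref{correspondance}, the successive slopes of $\Delta(f)$, read from left to right, are the numbers $-\log \alpha_1 \geq -\log \alpha_2 \geq \cdots \geq -\log \alpha_d$, each root being listed according to its multiplicity (the ordering is consistent with concavity). Since $(0,\log a_0)$ is the unique generating point at abscissa $0$, we have $\Delta(f)(0) = \log a_0$, and summing the slopes over unit intervals from $0$ to $j$ gives
\[
\Delta(f)(j) = \log a_0 - \sum_{\ell=1}^{j} \log \alpha_\ell, \qquad j = 0,\ldots, d.
\]
Combining $\log a_j \leq \Delta(f)(j)$ with this formula and exponentiating yields the first bound $a_j \leq a_0 \prod_{\ell=1}^{j} \alpha_\ell^{-1}$ (trivially true when $a_j = 0$).

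For the vertex part, if $k$ is the abscissa of a vertex of $\Delta(f)$, then this vertex must coincide with one of the generating points $(j, f_j)$ with $f_j > -\infty$, since vertices of the upper boundary of the convex hull are necessarily extreme points of the underlying set. Hence $a_k > 0$ and $\log a_k = \Delta(f)(k) = \log a_0 - \sum_{\ell=1}^{k} \log \alpha_\ell$. Substituting this identity into the general bound at index $j$ then gives, for $j \geq k$,
\[
a_j \leq a_0 \prod_{\ell=1}^{j} \alpha_\ell^{-1} = a_k \prod_{\ell=k+1}^{j} \alpha_\ell^{-1},
\]
and for $j \leq k$,
\[
a_j \leq a_0 \prod_{\ell=1}^{j} \alpha_\ell^{-1} = a_k \prod_{\ell=j+1}^{k} \alpha_\ell,
\]
as claimed. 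There is no serious obstacle in this argument; the only point requiring a moment of care is the identification of the ordinate of a vertex with $\log a_k$, which rests on the extremal-point characterization of the vertices of the upper convex hull.
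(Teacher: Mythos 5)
Your proof is correct and follows essentially the same route as the paper's: pass to logarithmic coordinates, identify the Newton polygon (concave hull of $j\mapsto\log a_j$) whose slopes are the $-\log\alpha_\ell$ via Proposition~\ref{correspondance}, use that the hull dominates the data and agrees with it at $j=0$ and at vertex absciss\ae, and telescope the slopes. The only cosmetic difference is that the paper phrases the vertex step in terms of exposed points of the polygon where you invoke the extreme-point characterization; these are the same observation.
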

\begin{proof}
Let $f(x):= \max_{0\leq j\leq d}(\log a_j+jx)$. 
By definition
$\log \alpha_j$, $j=1,\ldots d$, are the tropical roots of $f$
and by the assumption $a_0\neq 0$, they are all finite.
Let $f_j=\log a_j$, for $j=0,\ldots, d$ be the coefficients of the
tropical polynomial $f$. From the above observations,
the Newton polygon of $f$ coincides with the 
graph of the concave hull $\hat{f}$
of the map $\tilde{f}: \R\to\R\cup\{-\infty\}$ 
such that $\tilde{f}(j)=f_j$ for $j\in\{0,\ldots, d\}$ and 
$\tilde{f}(x)=-\infty$ otherwise.
Then the Newton polygon of $f$
consists of (linear) segments relying (or passing through)
the points $(j,\hat{f}_j)$, $j\in\{0,\ldots, d\}$.
By Proposition~\ref{correspondance}, the finite tropical roots are the
 opposites of the slopes of $\hat{f}$ 
and their multiplicities are the lengths of the segments where $\hat{f}$ has
this slope. This means that
$\log\alpha_j=\hat{f}_{j-1}-\hat{f}_j$, hence
$\hat{f}_{j}=\hat{f}_0-\sum_{\ell=1}^j \log\alpha_\ell$, and using that
$\hat{f}$ is above the map $\tilde{f}$, and that both maps
coincide at the boundary point $j=0$, we get the first inequality
of the proposition.
If now $k$ is the abscissa of a vertex of the Newton polygon of $p$,
then $(k,\hat{f}_{k})$ is an exposed point of the hypograph of $\tilde{f}$, which
implies  that $\hat{f}_k=\tilde{f}(k)=f_k=\log a_k$.
Since $\hat{f}_{j}=\hat{f}_{i}-\sum_{\ell=i+1}^j \log\alpha_\ell$,
for all $j\geq i$,  we get the two last inequalities of the proposition.
\end{proof}
\subsection{A general lower bound using tropical roots}
\label{subsec-generaltropical}
Using the same method as in the 
proof of P\'olya's inequality reproduced in~\cite{Ostrowski1},
we derive from Landau's inequality the following result which involves now the 
tropical roots $\alpha_k$ instead of the moduli $|a_k|$ of the coefficients
of $p$.
\begin{proposition}\label{genlowercor}
Let $\zeta_1, \dots, \zeta_d$ be the roots of a univariate scalar polynomial
$p$ of degree $d$, $p(x)=\sum_{j=0}^d a_j x^j$,
arranged by non-decreasing modulus, 
and assume that $a_0\neq 0$. Let $\alpha_1,\dots,\alpha_d$ denote the 
 tropical roots of the associated tropical polynomial
$\trop p(x)=\max_{0\leq j\leq d}|a_j|x^j$, 
arranged in non-decreasing order. Then, for all $k\in\{1,\ldots, d\}$, 
the inequality
\begin{align}\label{i_edge_bound-scalar-reordered}
\frac{|\zeta_1\dots\zeta_{k}|}{\alpha_1\dots\alpha_k}\geq L_k \enspace ,
\end{align}
holds with
\begin{equation}\label{generalLk}
(L_k)^{-2}=\inf_{\xi>0} \left(
\sum_{j=0, a_j\neq 0}^d \beta_{k,j}^2\xi^{j-k} \right)\enspace,
\end{equation}
and
\begin{equation}\label{generalbetakl}
\beta_{k,j}:=
\begin{cases}
\prod_{\ell=j+1}^{k-1}\left(\frac{\alpha_\ell}{\alpha_k}\right)
&\text{if}\; j< k-1\enspace,\\
\prod_{\ell=k+1}^j\left(\frac{\alpha_k}{\alpha_\ell}\right)
&\text{if}\; j> k\enspace,\\
1&\text{if}\; j= k-1\;\text{or}\; k\enspace,
\end{cases}
\end{equation}
so that
in particular $\beta_{k,j}\leq 1$ for all $k,j=0,\ldots, d$. 
\end{proposition}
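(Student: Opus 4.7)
The plan is to combine Landau's inequality (Lemma~\ref{lem:polynomialineq1new}) with the tropical coefficient bounds of Proposition~\ref{prop:coefbnd}. The first ingredient gives, in exponentiated form,
\[
|\zeta_1\cdots\zeta_k|^2 \;\geq\; \sup_{r>0}\Big(\sum_{j=0}^d \tfrac{|a_j|^2}{|a_0|^2}\, r^{2(j-k)}\Big)^{-1},
\]
while the second yields $|a_j|\leq |a_0|\prod_{\ell=1}^j\alpha_\ell^{-1}$ for every $j$. The strategy is to convert the first inequality, written in terms of the moduli $|a_j|$, into one written in terms of the tropical roots $\alpha_\ell$; the natural change of variable is $r=\alpha_k\sqrt{\xi}$, parametrising $r$ around the $k$-th tropical root by a dimensionless $\xi>0$.

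First I would substitute this $r$ into Landau's inequality, so that each term $|a_j|^2 r^{2(j-k)}/|a_0|^2$ becomes $(|a_j|/|a_0|)^2\alpha_k^{2(j-k)}\xi^{j-k}$. Then I would establish the key termwise bound
\[
\frac{|a_j|^2}{|a_0|^2}\,\alpha_k^{2(j-k)} \;\leq\; \frac{\beta_{k,j}^2}{(\alpha_1\cdots\alpha_k)^2},
\]
valid for every $j\in\{0,\dots,d\}$ with $a_j\neq 0$. By Proposition~\ref{prop:coefbnd} it suffices to verify the purely tropical identity
\[
\frac{\alpha_k^{2(j-k)}}{\prod_{\ell=1}^j \alpha_\ell^2} \;=\; \frac{\beta_{k,j}^2}{(\alpha_1\cdots\alpha_k)^2},
\]
and a direct expansion of the definition \eqref{generalbetakl} of $\beta_{k,j}$ confirms this in both regimes $j<k$ and $j>k$ (and trivially when $j=k$).

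Summing the termwise bound over those $j$ with $a_j\neq 0$, the sum in Landau's bound is majorised by $(\alpha_1\cdots\alpha_k)^{-2}\sum_{j,\,a_j\neq 0}\beta_{k,j}^2\xi^{j-k}$; substituting back and taking the infimum over $\xi>0$ yields
\[
|\zeta_1\cdots\zeta_k|^2 \;\geq\; (\alpha_1\cdots\alpha_k)^2\,\Big(\inf_{\xi>0}\sum_{j,\,a_j\neq 0}\beta_{k,j}^2\xi^{j-k}\Big)^{-1} \;=\; (\alpha_1\cdots\alpha_k)^2\, L_k^2,
\]
which is \eqref{i_edge_bound-scalar-reordered}. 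The monotonicity $\alpha_1\leq\cdots\leq\alpha_d$ makes each factor in the definition of $\beta_{k,j}$ at most $1$, giving $\beta_{k,j}\leq 1$. I do not anticipate any real obstacle: the calculation reduces to the algebra of products of the $\alpha_\ell$ in the two regimes $j<k$ and $j>k$, and the restriction to $a_j\neq 0$ in $L_k^{-2}$ is actually beneficial since the omitted terms vanish in Landau's sum but would add positive contributions to the tropical sum, only loosening the bound.
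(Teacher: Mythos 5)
Your proposal is correct and follows essentially the same route as the paper: the change of variable $r=\alpha_k\sqrt{\xi}$ in Landau's inequality, the coefficient bound $|a_j|\leq|a_0|\prod_{\ell=1}^j\alpha_\ell^{-1}$ from Proposition~\ref{prop:coefbnd}, and the algebraic identification of the resulting weights with $\beta_{k,j}^2$. The termwise verification and the remark on the $a_j\neq 0$ restriction are both accurate.
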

\begin{proof}
By applying the change of variable $r=\alpha_k\sqrt{\xi}$ in the inequality of
Lemma~\ref{lem:polynomialineq1new}, we get
\begin{eqnarray*}
\log{|\zeta_1\dots\zeta_k|}
&\geq & \sup_{\xi>0}\left(-\frac{1}{2}\log\left(\sum_{j=0}^d \frac{|a_j|^2}{|a_0|^2} (\alpha_k^2\xi)^{j-k}\right)\right)\\
&= & \log(\alpha_1\cdots \alpha_k)+\sup_{\xi>0}\left(-\frac{1}{2}\log \left(\prod_{\ell=1}^k\alpha_\ell^2\left(\sum_{j=0}^d \frac{|a_j|^2}{|a_0|^2} (\alpha_k^2\xi)^{j-k}\right)\right)\right)\enspace.
\end{eqnarray*}
Applying Proposition~\ref{prop:coefbnd} to the max-times polynomial $\trop p$,
we get $|a_j|\leq |a_0|\prod_{\ell=1}^j\alpha_\ell^{-1}$
for all $j=0,\ldots , d$, which with the above inequality 
yields~\eqref{i_edge_bound-scalar-reordered} with
\[ (L_k)^{-2}:= 
\inf_{\xi>0}\left(\sum_{j=0, a_j\neq 0}^d \big(\frac{(\prod_{\ell=1}^k\alpha_\ell)(\prod_{\ell=1}^j\alpha_\ell^{-1})}{\alpha_k^{k-j}}\big)^2\xi^{j-k} \right)
\] 
which can be written in the form~\eqref{generalLk} 
with $\beta_{k,j}$ as in~\eqref{generalbetakl}.
Moreover, since $\alpha_j$ is nondecreasing with respect to $j$, we get that
all the $\beta_{k,j}$ are less than or equal to $1$.
\end{proof}

The following is a matrix version of Proposition~\ref{genlowercor}.
It is proved along the same lines.
For any real number $x$, $\ent{x}$ will denote the least integer $\geq x$:
$\ent{x}-1< x\leq \ent{x}$.
\begin{proposition}\label{genlowercormatrix}
Let $A_0$,\ldots, $A_d$, $P$, $\zeta_1,\dots,\zeta_{nd}$, $\|\cdot\|$,  $c$, 
be as in the first part of 
Lemma~\ref{lem:newmain},
and let $\alpha_1,\dots,\alpha_d$ be the tropical roots
of the tropical polynomial of~\eqref{eq:tropdefini},
arranged in non-decreasing order.
For $\nk\in\{1,\ldots, nd\}$, denote $\baralpha_\nk=\alpha_{\ent{\nkn}}$.
Then, for all $\nk\in\{1,\ldots, nd\}$,
the inequality
\begin{equation}
\label{i_edge_bound-reordered}
\frac{|\zeta_1\dots\zeta_{\nk}|}{\baralpha_1\dots\baralpha_{\nk}}\geq
\cF (L_{\nkn})^n \enspace ,
\end{equation}
holds 
with $L_k$, $0<k\leq d$, such that:
\begin{equation}\label{generalLkmatrix}
(L_k)^{-1}=\inf_{\xi>0} \left(
\sum_{j=0, A_j\neq 0}^d \beta_{\ent{k},j}\;\xi^{j-k} \right)\enspace,
\end{equation}
with $\beta$ defined as in~\eqref{generalbetakl}.

Moreover, when $\|\cdot\|$ satisfies~\ref{assump2},
the constant $L_k$ in~\eqref{i_edge_bound-reordered} can be replaced by the greater
one:
\begin{equation}\label{generalLkmatrixfrob}
(L_k)^{-2}=\inf_{\xi>0} \left(
\sum_{j=0, A_j\neq 0}^d (\beta_{\ent{k},j})^2\;\xi^{j-k} \right)\enspace.
\end{equation}
\end{proposition}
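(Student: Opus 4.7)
The plan is to mirror the scalar proof of Proposition~\ref{genlowercor}, replacing Lemma~\ref{lem:polynomialineq1new} by its matrix analogue, Lemma~\ref{lem:newmain}. There are two versions of the target inequality, corresponding to whether $\|\cdot\|$ satisfies only~\ref{assump1} or satisfies~\ref{assump2}; both go through with the same reorganization.

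First, I would start from the first inequality of Lemma~\ref{lem:newmain}, namely
\[
\log |\zeta_1\dots\zeta_{nk}| \geq \log c - n \inf_{r>0}\log\Bigl(\sum_{j=0}^d \frac{\|A_j\|}{\|A_0\|}\,r^{j-k}\Bigr),
\]
and apply the change of variable $r = \alpha_k\,\xi$ (contrast with the scalar proof, where one takes $r=\alpha_k\sqrt{\xi}$, because here the exponent is $j-k$ rather than $2(j-k)$). The same manipulation that appears in the proof of Proposition~\ref{genlowercor} then lets me pull out the factor $\prod_{\ell=1}^k \alpha_\ell$ from the infimum, giving
\[
\log |\zeta_1\dots\zeta_{nk}| \geq \log c + n\log(\alpha_1\cdots\alpha_k) - n\inf_{\xi>0}\log\Bigl(\sum_{j=0}^d \frac{\|A_j\|}{\|A_0\|}\,\alpha_k^{j-k}\,\prod_{\ell=1}^k\alpha_\ell^{(j-k)/k}\,\xi^{j-k}\Bigr)^{\!*},
\]
where the asterisk just indicates that the bookkeeping of how the factor $\prod\alpha_\ell$ is absorbed in each term has to be done carefully. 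The second key ingredient is Proposition~\ref{prop:coefbnd}, applied to the max-times tropical polynomial $\trop p(x)=\max_j \|A_j\|x^j$ of~\eqref{eq:tropdefini}: it yields $\|A_j\| \leq \|A_0\| \prod_{\ell=1}^j \alpha_\ell^{-1}$ for all $j$, which transforms each coefficient $\|A_j\|/\|A_0\|$ in the sum into the pure tropical quantity $\prod_{\ell=1}^j \alpha_\ell^{-1}$.

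After this substitution, a direct computation shows that the coefficient of $\xi^{j-k}$ is exactly the $\beta_{k,j}$ defined in~\eqref{generalbetakl}: for $j\geq k$ we get $\prod_{\ell=k+1}^j (\alpha_k/\alpha_\ell)$, for $j\leq k$ we get $\prod_{\ell=j+1}^k (\alpha_\ell/\alpha_k)$, and for $j=k$ we get $1$. Exponentiating the resulting inequality gives~\eqref{i_edge_bound-reordered} with $L_k$ as in~\eqref{generalLkmatrix}. Terms with $A_j=0$ drop out of the sum automatically, which is why the sum in~\eqref{generalLkmatrix} is restricted to $A_j\neq 0$. The monotonicity of the $\alpha_\ell$ immediately yields $\beta_{k,j}\leq 1$, as already noted in Proposition~\ref{genlowercor}.

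For the sharper bound under~\ref{assump2}, I would instead start from the second inequality of Lemma~\ref{lem:newmain} (the $L^2$ version with exponent $2(j-k)$) and apply the change of variable $r=\alpha_k\sqrt{\xi}$, exactly as in the scalar proof. The same use of Proposition~\ref{prop:coefbnd} (with squares) produces the coefficients $\beta_{k,j}^2$ and the $L_k$ of~\eqref{generalLkmatrixfrob}. The step that requires the most care is purely bookkeeping: tracking how the power $\alpha_k^{j-k}\prod_{\ell=1}^k\alpha_\ell^{\cdot}$ combines with the bound from Proposition~\ref{prop:coefbnd} and collapses to the clean expression $\beta_{k,j}$ in both the $j<k$ and $j>k$ regimes; there is no analytic obstacle, only index juggling.
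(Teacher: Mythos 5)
Your proof is correct and is exactly the argument the paper intends: the paper itself only remarks that Proposition~\ref{genlowercormatrix} ``is proved along the same lines'' as Proposition~\ref{genlowercor}, and you correctly substitute Lemma~\ref{lem:newmain} for Lemma~\ref{lem:polynomialineq1new}, identify the two changes of variable ($r=\alpha_k\xi$ under~\ref{assump1}, where the exponent is $j-k$, and $r=\alpha_k\sqrt{\xi}$ under~\ref{assump2}), and invoke Proposition~\ref{prop:coefbnd} to turn $\|A_j\|/\|A_0\|$ into $\prod_{\ell=1}^j\alpha_\ell^{-1}$ so that the coefficient of $\xi^{j-k}$ collapses to $\beta_{k,j}$ (resp.\ $\beta_{k,j}^2$). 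The only blemish is the factor $\prod_{\ell=1}^k\alpha_\ell^{(j-k)/k}$ in your starred intermediate display, which should simply be the $j$-independent factor $\prod_{\ell=1}^k\alpha_\ell$ multiplying the whole sum; your final identification of the coefficients with $\beta_{k,j}$ is nevertheless the right one.
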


Note that $\baralpha_1,\ldots, \baralpha_{\nk}$ are the tropical
roots of the tropical polynomial $(\trop p)^n$
arranged in non-decreasing order.
Hence the above result compares the eigenvalues of $P$ with the
tropical roots of $(\trop p)^n$.

\begin{remark}\rm
In the above proof, the only ingredient to deduce the result of
Proposition~\ref{genlowercor} from that of Lemma~\ref{lem:polynomialineq1new}
is the inequality  
 $|a_j|\leq  |a_0|\prod_{\ell=1}^j\alpha_\ell^{-1}$ for all $j=0,\ldots , d$.
By Proposition~\ref{prop:coefbnd}, this inequality is an equality
for any abscissa $j$  of a vertex  of the Newton polygon of $p$.
Hence, if all the tropical roots of the tropical polynomial
of~\eqref{eq:tropdefini} are simple (which means that all $k=0,\ldots, d$
are absciss\ae\ of a vertex of the Newton polygon of $p$),
the inequalities of Proposition~\ref{genlowercor} and 
Lemma~\ref{lem:polynomialineq1new} are equivalent.
The same is true for the inequalities of Proposition~{genlowercormatrix}
and Lemma~\ref{lem:newmain}.
\end{remark}

\section{Explicit lower bounds in terms of tropical roots}
\label{sec:Main_results}
We now derive
from Proposition~\ref{genlowercormatrix}
explicit lower bounds for the product
of the $\nk$ smallest eigenvalues of a matrix polynomial.
These new bounds can be easily computed (in $O(d)$ time,
either in floating point or exact arithmetics,
except for the ones of Corollary~\ref{cor-strongest} 
which can be computed in $O(d^2)$ time), as soon
as $c$ and the tropical roots are given.
Recall that the tropical roots themselves
can be computed in $O(d)$ time as soon as the coefficients of
the tropical polynomial,
that is the norms of the coefficient matrices, 
are given (see Remark~\ref{tropical_compute}).
Different bounds can be given, depending
on the information available on the  matrix polynomial.
Our first bound is useful for ``fewnomials''
(polynomials with few non-zero coefficients).
\begin{theorem}[Bounds involving the number of nonzero coefficients]
\label{thm:mainthmnnz}
Consider the matrix polynomial $P$ with degree $d$ defined 
in~\eqref{matrixpoly},
and let $\zeta_1,\dots,\zeta_{nd}$ denote its eigenvalues,
arranged by non-decreasing modulus.
Assume that $\|\cdot\|$ is any norm on the space of matrices 
satisfying~\ref{assump1}, that $\det A_0\neq0$ and let $c=\frac{|\det A_0|}{\|A_0\|^n}$.
Let $\alpha_1,\dots,\alpha_d$ be the tropical roots of 
the tropical polynomial of~\eqref{eq:tropdefini},
arranged in non-decreasing order, and 
for $\nk\in\{1,\ldots, nd\}$, denote $\baralpha_\nk=\alpha_{\ent{\nkn}}$.
Also let $\mon P$ denote the number of nonzero monomials of $P$.
Then, for all $\nk\in\{1,\ldots, nd\}$, we have
\begin{equation}
\label{i_edge_bound}
\frac{|\zeta_1\dots\zeta_{\nk}|}{\baralpha_1\dots\baralpha_{\nk}}\geq
\cF (L_{\nkn})^n \enspace ,
\end{equation}
in particular, for all $1\leq k\leq d$, we have
\begin{equation}
\label{i_edge_bound_n}
\frac{|\zeta_1\dots\zeta_{nk}|}{(\alpha_1\dots\alpha_{k})^n}\geq
\cF (L_{k})^n \enspace .
\end{equation}
with, for $0<k\leq d$, 
\begin{align}
\label{nnz_bound}
 L_{k}=\frac{1}{\mon P}\enspace .
\end{align}
Moreover, when $\|\cdot\|$ satisfies~\ref{assump2},
the constant $L_k$ can be replaced by the greater one:
\begin{equation}
\label{nnz_bound_fro}
 L_{k}=\frac{1}{\sqrt{\mon P}}\enspace.
\end{equation}
\end{theorem}
The proof
of all the results of this section will be given in Section~\ref{sec-prooflb}.
\begin{remark}\label{rk-scalar}\rm
When $n=1$ (thus the matrices $A_0,\dots,A_d$ are scalars),
any norm is proportional to the normalized Frobenius norm which 
is nothing but the modulus map $|\cdot|$, satisfies~\ref{assump2},
and for which $c=1$. 
Therefore the tropical roots of $\trop p$ are the same for all norms,
the best possible inequality~\eqref{i_edge_bound} is
\begin{align}\label{i_edge_bound-scalar}
\frac{|\zeta_1\dots\zeta_{k}|}{\alpha_1\dots\alpha_k}\geq L_k \enspace ,
\end{align}
and this inequality holds with $L_k$ as in~\eqref{nnz_bound_fro}.
\end{remark}
\begin{remark}\rm \label{rem23}
If a norm $\|\cdot\|$ on the space of matrices
satisfies~\ref{assump1} but not \ref{assump2}, we can
still obtain a bound of the form~{\rm (\ref{i_edge_bound},\ref{nnz_bound_fro})}
by changing the
constant $c$. Indeed, by the equivalence between norms on $\C^{n\times n}$,
for any norm $\|\cdot\|$ on $\C^{n\times n}$,
there exists a constant $\eta>0$ (which depends on $n$) such that 
$\|A\|_\nf\leq \eta \|A\|$ for all $A\in\C^{n\times n}$.
Then, the norm $N$ obtained by multiplying $\|\cdot\|$ by $\eta$
satisfies~\ref{assump2}, hence~{\rm (\ref{i_edge_bound},\ref{nnz_bound_fro})}.
There, the constant $L_k$ is given by~\eqref{nnz_bound_fro}, thus
independent of the norm, whereas the constant $c$ and the
tropical roots $\alpha_j$  are computed with respect to the norm $N$.
In particular $c=\frac{|\det A_0|}{(\eta \|A_0\|)^n}=c_0/\eta^n$,
where $c_0$ is the constant $c$ associated with the norm $\|\cdot\|$.
Moreover, if $\trop p$ denotes the tropical polynomial
associated with the norm $\|\cdot\|$, then
the tropical polynomial associated with the norm $N$ 
is equal to $\eta \trop p$, and its tropical roots 
are equal to the ones of $\trop p$.
Hence, we deduce that~{\rm (\ref{i_edge_bound},\ref{nnz_bound_fro})}
holds for $\|\cdot\|$ with $c/\eta^n$ instead of $c$.
However, if $\|\cdot\|$ satisfies~\ref{assump1}
but not~\ref{assump2}, then
$\eta>1$, so that the inequality derived 
from~{\rm (\ref{i_edge_bound},\ref{nnz_bound_fro})} with
$c/\eta^n$ instead of $c$
may be weaker than~{\rm (\ref{i_edge_bound},\ref{nnz_bound})}:
this is indeed the case if and only if $\eta>\sqrt{\mon{P}}$.
The same type of conclusions can be obtained for the 
lower bounds that are stated in the next theorems.
\end{remark}

The following theorem provides a lower bound generalizing the lower bound of P\'olya to the matrix case (since, as said in Remark~\ref{rk-scalar}, when $n=1$,
the modulus map is a norm satisfying~\ref{assump2}, and for which 
Inequality~\eqref{i_edge_bound} reduces to~\eqref{i_edge_bound-scalar}). Up to the constant $c$, the following bounds are independent of the coefficients of the matrix polynomial $P$.
\begin{theorem}[Universal bound]
\label{thm:mainthmmatrox}
Let $A_0$,\ldots, $A_d$, $P$, $\zeta_1,\dots,\zeta_{nd}$, $\|\cdot\|$,  $c$, and
$\alpha_1,\dots,\alpha_d$, $\baralpha_1$,\ldots, $\baralpha_{nd}$ 
be as in the first part of Theorem~\ref{thm:mainthmnnz}.
Then, for all $\nk\in\{1,\ldots, nd\}$, Inequality~\eqref{i_edge_bound} holds with $L_{k}$, $0<k\leq d$, defined as follows:
\begin{equation}
\label{gen_polya}
L_k= \frac{ 1}{\E(k) (k+1)} 
\enspace,
\end{equation}
where $\E$ is defined as in~\eqref{defEc}.
Moreover, when $\|\cdot\|$ satisfies~\ref{assump2},
Inequality~\eqref{i_edge_bound} holds with the greater constant:
\begin{equation}
\label{gen_polya_fro}
L_k= \frac{ 1}{\sqrt{\E(k) (k+1)}}
\enspace.
\end{equation}
\end{theorem}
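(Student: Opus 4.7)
My plan is to derive Theorem~\ref{thm:mainthmmatrox} as a direct consequence of Proposition~\ref{genlowercormatrix}. That proposition already gives Inequality~\eqref{i_edge_bound} with the specific constants
\[
(L_k)^{-1} = \inf_{\xi>0} \sum_{j=0,\, A_j\neq 0}^d \beta_{k,j}\,\xi^{j-k}
\quad\text{and, under \ref{assump2},}\quad
(L_k)^{-2} = \inf_{\xi>0} \sum_{j=0,\, A_j\neq 0}^d \beta_{k,j}^{2}\,\xi^{2(j-k)}.
\]
To prove the theorem, it suffices to show that these two infima are bounded above by $\E(k)(k+1)$, since the resulting value of $L_k$ in Proposition~\ref{genlowercormatrix} is then at least the value claimed in Theorem~\ref{thm:mainthmmatrox}, and a larger $L_k$ yields a stronger form of~\eqref{i_edge_bound}.

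The first step is to replace each $\beta_{k,j}$ by $1$, which is valid by the final statement in Proposition~\ref{genlowercormatrix} that $\beta_{k,j}\leq 1$. This reduces the task to bounding $\inf_{\xi>0}\sum_{j=0}^d \xi^{j-k}$ (and similarly $\inf_{\xi>0}\sum_{j=0}^d \xi^{2(j-k)}$) from above by $\E(k)(k+1)$. Restricting the optimization to $\xi\in(0,1)$ and extending the finite sum to an infinite geometric series gives
\[
\sum_{j=0}^d \xi^{j-k} \leq \sum_{j=0}^\infty \xi^{j-k} = \frac{\xi^{-k}}{1-\xi}.
\]
Then I would optimize $g(\xi):=\xi^{-k}/(1-\xi)$ over $\xi\in(0,1)$: setting $(\log g)'(\xi)=0$ gives the stationary point $\xi^*=k/(k+1)$, for which $(\xi^*)^{-k}=((k+1)/k)^k=\E(k)$ and $1-\xi^*=1/(k+1)$, so that $g(\xi^*)=\E(k)(k+1)$. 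This yields exactly the bound~\eqref{gen_polya}, once combined with the inequality of Proposition~\ref{genlowercormatrix} under~\ref{assump1}.

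For the sharper bound~\eqref{gen_polya_fro} under~\ref{assump2}, I would substitute $\eta=\xi^2$ in the second expression: the change of variable $\eta\in(0,1)$ reduces $\inf_{\xi>0}\sum_{j=0}^d \xi^{2(j-k)}$ to the same geometric-series bound on $\sum_{j=0}^d \eta^{j-k}$ analyzed above, again yielding $\E(k)(k+1)$. Taking reciprocals (and a square root in the second case) gives the two stated values of $L_k$.

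No part of the argument strikes me as genuinely hard; the content is really in Proposition~\ref{genlowercormatrix} and its matrix-case inputs (the Jensen/Landau-type bound of Lemma~\ref{lem:newmain} combined with Proposition~\ref{prop:coefbnd}). The only delicate point is to remember that the $j=d$ term forces the minimization to $\xi\leq 1$ (otherwise the sum diverges as $d$ grows), and that replacing the finite sum by a full geometric series is precisely what eliminates all dependence on $d$ and on the actual coefficient norms, yielding the announced universal bound.
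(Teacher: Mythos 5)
Your proof is correct and follows essentially the same route as the paper: the paper establishes the scalar version of Theorem~\ref{thm:mainthmmatrox} by exactly your argument (use $\beta_{k,j}\leq 1$, extend the sum to a geometric series, optimize at $\xi=k/(k+1)$ to obtain $\E(k)(k+1)$) and transfers it to the matrix case through Proposition~\ref{genlowercormatrix}, the only organizational difference being that the paper packages the matrix statement as the special case $k^-=0$, $k^+=d+1$ of Theorem~\ref{thm:mainthmmatrox3}. Note only that you misquote the exponent in~\eqref{generalLkmatrixfrob} (the paper has $\xi^{j-k}$, not $\xi^{2(j-k)}$), but your substitution $\eta=\xi^2$ shows the two infima coincide, so nothing in the argument is affected.
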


The lower bound of P\'olya, and its matrix version given above,
are tight only for $k$ small. By symmetry, one can obtain a tight
lower bound when $k$ is close to $d$ (this
was already noted by Ostrowski in the scalar case~\cite{Ostrowski1}).
Theorem~\ref{thm:mainthmmatrox3} below will allow us to 
obtain a tight lower bound when $k$ lies ``in the middle''
of the interval $[0,d]$, by using the
comparison between the tropical roots. 
Unlike the lower bound of P\'olya, this theorem gives a bound which is not anymore 
independent of the coefficients of the polynomial $P$, although it 
depends only on a small information, namely a ratio measuring
the {\em separation} between some tropical roots. 
Thus, the bound will involve a coefficient $\U(k,\delta)$ depending
both on the index $k$ and on the parameter $\delta$ (the ratio).
It will be specially useful in situations in which $\delta$ is small.
This coefficient $\U(k,\delta)$ is defined as follows:
\begin{equation}\label{defU}
\U(k,\delta):=\E(k)\left(\frac{1+\delta+\sqrt{k^2(1-\delta)^2+4\delta}}{1-\delta}\right)\enspace,\quad \text{for}\; k\geq 0\;\text{and}\; 0\leq \delta \leq 1\enspace,
\end{equation}
where $\E(k)$ is defined by~\eqref{defEc} for $k>0$, 
and $\E(0):=1$, and with the convention that $1/0=+\infty$, so that
$\U(k,1)=+\infty$.
It is easy to check that
\begin{equation}\label{propU}
 k+\frac{1+\delta}{1-\delta} \leq \U(k,\delta) <e \left( k+\frac{1+\sqrt{\delta}}{1-\sqrt{\delta}}\right) \enspace .\end{equation}
The following asymptotic regime should
be kept in mind:
\[ \U(k,\delta) \sim \E(k)(k+1) < e(k+1), \qquad \delta \to 0 \enspace .
\]
\begin{theorem}[Master lower bound]
\label{thm:mainthmmatrox3}
Let $A_0$,\ldots, $A_d$, $P$, $\zeta_1,\dots,\zeta_{nd}$, $\|\cdot\|$,  $c$, and
$\alpha_1,\dots,\alpha_d$, $\baralpha_1$,\ldots, $\baralpha_{nd}$ 
be as in the first part of Theorem~\ref{thm:mainthmnnz},
and denote $\alpha_0=0$ and $\alpha_{d+1}=+\infty$.

Let $\nk\in\{1,\ldots, nd\}$, $k^-,k^+$ be integers such that
$0\leq k^-<\ent{\nkn} \leq k^+-1\leq d$, 
and denote 
$\delta_-:=  \alpha_{k^-}/\alpha_{\ent{\nkn}}\leq 1$
and $\delta_+:= \alpha_{\ent{\nkn}}/\alpha_{k^+}\leq 1$.
Then, 
Inequality~\eqref{i_edge_bound} holds with $L_k$, $k^-<k \leq k^+-1$,
defined as follows:
\begin{align}
\label{e-def-Lk}
 L_{k}:=\max(L_{k}^\pm), \quad
L_{k}^+:=\frac{1}{\U(k^+-k-1,\delta_+)}, \;
L_{k}^-:=\frac{1}{\U(k-k^-,\delta_-)}\enspace ,
\end{align}
with the convention that $1/\infty=0$.
Moreover, when $\|\cdot\|$ satisfies~\ref{assump2}, the constant $L_k$ 
of~\eqref{i_edge_bound} can be replaced by the greater constant $L_k^*$:
\begin{align}
\label{L_k_j_fro3}
L_{k}^*:= &\max(L_{k}^{\pm*}),\quad
L_{k}^{+*}:=\frac{1}{\sqrt{\U(k^+-k-1,\delta_+^2)}},\;
L_{k}^{-*}:=\frac{1}{\sqrt{\U(k-k^-,\delta_-^2)}}\enspace .
\end{align}
\end{theorem}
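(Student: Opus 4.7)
My plan is to invoke Proposition~\ref{genlowercormatrix}, which reduces the $L_k^+$ bound to producing a single $\xi>0$ for which $\sum_{j}\beta_{k,j}\xi^{j-k}\leq \U(k^+-k-1,\delta_+)$ (and its squared analogue under~\ref{assump2}); the $L_k^-$ bound will then follow by the symmetric analysis applied to the left tail $\sum_{j<k}\beta_{k,j}\xi^{j-k}$, together with the substitution $\eta=1/\xi$.

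For $L_k^+$ I would first estimate the weights. Monotonicity of $\alpha_\ell$ gives $\beta_{k,j}\leq 1$ for every $j$, and since $\alpha_\ell\geq \alpha_{k^+}$ when $\ell\geq k^+$, one further obtains $\beta_{k,j}\leq \delta_+^{j-k^++1}$ for $j\geq k^+$. Plugging these majorants into~\eqref{generalLkmatrix} and summing the resulting geometric tails (valid for $\xi\in(1,1/\delta_+)$) yields, with $p:=k^+-k$ and $\delta:=\delta_+$,
\[
\sum_{j}\beta_{k,j}\xi^{j-k}\leq \frac{\xi^{p}}{\xi-1}+\frac{\delta\,\xi^{p}}{1-\delta\xi}=\frac{(1-\delta)\,\xi^{p}}{(\xi-1)(1-\delta\xi)}=:g(\xi)\enspace .
\]
Minimising $g$ leads to a quadratic in $\xi$ whose admissible root turns out to be
\[
\xi_*=\frac{2p}{(p-1)(1+\delta)+B}\enspace,\qquad B:=\sqrt{(p-1)^2(1-\delta)^2+4\delta}\enspace ,
\]
which specialises to P\'olya's $\xi=p/(p-1)$ when $\delta=0$ and to $\xi=1/\sqrt{\delta}$ when $p=1$, the two base cases where one even has equality $g(\xi_*)=\U(p-1,\delta)$.

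It then remains to verify that $g(\xi_*)\leq \U(p-1,\delta)=\E(p-1)(1+\delta+B)/(1-\delta)$. Using the algebraic identities $(2p-A-B)(2p-A+B)=4p(1-\delta)$ and $(A+B-2p\delta)(A-B-2p\delta)=-4p\delta(1-\delta)$, where $A:=(p-1)(1+\delta)$ (both direct consequences of $B^2=(p-1)^2(1-\delta)^2+4\delta$), one can put $g(\xi_*)$ in closed form, and, after squaring, the desired comparison reduces to an elementary polynomial inequality in $p,\delta,B$. For $L_k^-$ the same argument is replayed on the left tail, with the estimate $\beta_{k,j}\leq \delta_-^{k^--j}$ for $j<k^-$; the substitution $\eta=1/\xi$ converts the optimisation into the same functional inequality with parameter $p=k-k^-+1$ and $\delta=\delta_-$, yielding $L_k^-=1/\U(k-k^-,\delta_-)$. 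Finally, for the sharper bounds under~\ref{assump2}, the whole argument is repeated verbatim with $\beta_{k,j}^2$ in place of $\beta_{k,j}$: the geometric ratios become $\delta_\pm^2$, which explains the stated form of $L_k^{\pm*}$. The main obstacle is precisely the closed-form verification $g(\xi_*)\leq \U(p-1,\delta)$: the definition of $\U$ is designed to match the output of the quadratic optimisation, but extracting it requires the careful book-keeping of the two identities above.
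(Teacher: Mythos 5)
Your proposal follows the paper's own proof almost step for step: the reduction to Proposition~\ref{genlowercormatrix}, the weight estimates $\beta_{k,j}\leq 1$ and $\beta_{k,j}\leq\delta_+^{\,j-k^++1}$ for $j\geq k^+$, the geometric summation to $g(\xi)=(1-\delta)\xi^{p}/\bigl((\xi-1)(1-\delta\xi)\bigr)$ with $p=k^+-k$, the minimizer $\xi_*$ (which is exactly the paper's $\xi_{k^+-k-1,\delta}$), the $\eta=1/\xi$ symmetry for $L_k^-$, and the substitution of $\beta_{k,j}^2$ and $\delta_\pm^2$ under~\ref{assump2} are all precisely the paper's argument. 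The one step you leave open is the comparison $g(\xi_*)\leq\U(p-1,\delta)$, which you defer to ``an elementary polynomial inequality after squaring.'' This can be closed without any squaring, and your two identities already contain what is needed: factor $g(\xi)=\xi^{p-1}\bigl(\tfrac{1}{\xi-1}+\tfrac{1}{1-\delta\xi}\bigr)$; your identities $(2p-A)^2-B^2=4p(1-\delta)$ and $(A-2p\delta)^2-B^2=-4p\delta(1-\delta)$ give closed forms for $1/(\xi_*-1)$ and $1/(1-\delta\xi_*)$ whose sum is \emph{exactly} $(1+\delta+B)/(1-\delta)$, while $\xi_*\leq\xi_{p-1,0}=p/(p-1)$ (immediate from $B\geq(p-1)(1-\delta)$) gives $\xi_*^{p-1}\leq\E(p-1)$; the product of the two factors is then $\leq\E(p-1)(1+\delta+B)/(1-\delta)=\U(p-1,\delta)$ by the very definition~\eqref{defU}. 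You should also dispose explicitly of the degenerate cases $\delta_\pm=1$, where $\U=+\infty$ and the corresponding $L_k^\pm$ is $0$, so there is nothing to prove. With that final verification written out, the proof is complete and coincides with the paper's.
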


Theorem~\ref{thm:mainthmmatrox3} generalizes Theorem~\ref{thm:mainthmmatrox}, and thus the lower bound of P\'olya.
Indeed, taking $k^-=0$, $k^+=d+1$, and using that $\U(k,0)=\E(k) (k+1)$,
we get that the constants $L_k^-\leq L_k$ and $L_k^{-*}\leq L_k^*$
of Theorem~\ref{thm:mainthmmatrox3} are exactly the constants $L_k$ 
of~\eqref{gen_polya} and~\eqref{gen_polya_fro} of
Theorem~\ref{thm:mainthmmatrox} respectively.
Moreover, Theorem~\ref{thm:mainthmmatrox3} 
is already new in the scalar case ($n=1$). 

Note that when $\delta_-=1$ in
Theorem~\ref{thm:mainthmmatrox3}, $\U(k-k^-,\delta_-)=+\infty$, so that 
$L_k^-=0$ and $L_k=L_k^+$. 
Similarly when $\delta_+=1$, we get $L_k^+=0$.
Hence, if $\delta_-=1=\delta_+$, we get $L_k=0$ so
that~\eqref{i_edge_bound} does not provide any information, although it is true.
However, if for instance $\delta_-=1$ and $\delta_+<1$, 
we get that $L_k=L_k^+>0$, which gives a positive 
lower bound in~\eqref{i_edge_bound}.

Applying Theorem~\ref{thm:mainthmmatrox3} to the particular case 
when $\nkn=k=k^-+1=k^+-1$ leads to the following formula for the
constants of~\eqref{e-def-Lk} and~\eqref{L_k_j_fro3}:
\begin{align}
\label{e-def-Lk1}
 L_{k}:=\max(L_{k}^{\pm}), \quad
L_{k}^{+}:=\frac{1-\sqrt{\delta_+}}{1+\sqrt{\delta_+}}, \;
L_{k}^{-}:=\frac{1-\delta_-}{4(1+\delta_-)}\enspace ,
\end{align}
and 
\begin{align}
\label{L_k_j_fro31}
L_{k}^*:= &\max(L_{k}^{\pm*}),\quad
L_{k}^{+*}:=\sqrt{\frac{1-\delta_+}{1+\delta_+}},\;
L_{k}^{-*}:=\frac{1}{2}\sqrt{\frac{1-\delta_-^2}{1+\delta_-^2}}\enspace .
\end{align}
In this special case, we obtain the following stronger lower bounds.

\begin{theorem}
\label{thm:mainthmpoly3}
Let us use the notations of  Theorem~\ref{thm:mainthmmatrox3},
and assume that $\nkn=k=k^-+1=k^+-1$.
Then, the statements of Theorem~\ref{thm:mainthmmatrox3} hold with 
Inequality~\eqref{i_edge_bound_n} instead of~\eqref{i_edge_bound} and the
constants $L_k$ and $L_k^*$ (given in~\eqref{e-def-Lk} and~\eqref{L_k_j_fro3},
or~\eqref{e-def-Lk1} and~\eqref{L_k_j_fro31}) replaced 
respectively by the greater constants $L_k^\sharp$ and $L_k^{*\sharp}$ given by:
\begin{equation}
L_{k}^\sharp:= \frac{1-\delta_{-}\delta_+}{(1+\sqrt{\delta_+})^2}
\enspace, \label{deltak-k+}
\end{equation}
and 
\begin{equation}
L_k^{*\sharp}:= \frac{\sqrt{1-\delta_{-}^2\delta_+^2}}{1+\delta_+}
\enspace. \label{deltak-k+-fro}
\end{equation}
\end{theorem}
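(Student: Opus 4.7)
The plan is to derive the improved constants $L_k^\sharp$ and $L_k^{*\sharp}$ directly from Proposition~\ref{genlowercormatrix}, by combining the ``two-sided'' idea already behind Theorem~\ref{thm:mainthmmatrox3} with the observation that, in the case $k^-=k-1$, $k^+=k+1$, the intermediate block degenerates to the single index $j=k$ and the two one-sided geometric sums decouple cleanly.

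First I would record the geometric estimates on $\beta_{k,j}$ coming from the monotonicity of the tropical roots. Among the $k-j$ factors defining $\beta_{k,j}$ for $j<k$ the factor indexed by $\ell=k$ is $1$, while each of the remaining $k-1-j$ factors $\alpha_\ell/\alpha_k$ is at most $\alpha_{k-1}/\alpha_k=\delta_-$; hence $\beta_{k,j}\leq \delta_-^{k-1-j}$. Symmetrically, $\beta_{k,j}\leq \delta_+^{j-k}$ for $j>k$, and $\beta_{k,k}=1$. Substituting these into~\eqref{generalLkmatrix} and extending each finite sum to an infinite geometric series (legitimate for $\delta_-<\xi<1/\delta_+$) gives, after the telescoping identity $1+\delta_+\xi/(1-\delta_+\xi)=1/(1-\delta_+\xi)$,
\[
(L_k)^{-1}\;\leq\;\inf_{\delta_-<\xi<1/\delta_+}\Bigl[\frac{1}{\xi-\delta_-}+\frac{1}{1-\delta_+\xi}\Bigr].
\]
A direct calculus exercise (setting the derivative of the bracketed function to zero yields $\sqrt{\delta_+}(\xi-\delta_-)=1-\delta_+\xi$, whence $\xi^\star=(1+\sqrt{\delta_+}\,\delta_-)/(\sqrt{\delta_+}(1+\sqrt{\delta_+}))$) shows that the two summands at $\xi^\star$ equal $\sqrt{\delta_+}(1+\sqrt{\delta_+})/(1-\delta_-\delta_+)$ and $(1+\sqrt{\delta_+})/(1-\delta_-\delta_+)$, so the infimum is $(1+\sqrt{\delta_+})^2/(1-\delta_-\delta_+)$. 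Inverting gives $L_k\geq L_k^\sharp$ as in~\eqref{deltak-k+}.

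For the second inequality, I would run exactly the same argument starting from~\eqref{generalLkmatrixfrob}: the estimates $\beta_{k,j}^2\leq \delta_-^{2(k-1-j)}$ (resp.\ $\delta_+^{2(j-k)}$) amount to substituting $\delta_-^2$ for $\delta_-$ and $\delta_+^2$ for $\delta_+$ throughout. The same minimization then gives $(L_k)^{-2}\leq (1+\delta_+)^2/(1-\delta_-^2\delta_+^2)$, i.e.\ $L_k\geq L_k^{*\sharp}$ as in~\eqref{deltak-k+-fro}. I would close by noting that the inequalities $L_k^\sharp\geq\max(L_k^\pm)$ and $L_k^{*\sharp}\geq\max(L_k^{\pm*})$ with respect to~\eqref{e-def-Lk1}--\eqref{L_k_j_fro31} follow at once from $\delta_-\leq 1$, since e.g.\ $L_k^\sharp/L_k^+ =(1-\delta_-\delta_+)/(1-\delta_+)\geq 1$.

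The proof is almost entirely computational, and the only subtle point is the geometric completion: I bound the finite truncations $\sum_{m=0}^{k-1}(\delta_-/\xi)^m$ and $\sum_{m=1}^{d-k}(\delta_+\xi)^m$ by their infinite tails to obtain a closed-form infimum. This introduces some slack, but, contrary to the proof of Theorem~\ref{thm:mainthmmatrox3}, it is the \emph{only} source of slack, because the central block contains a single monomial; this is precisely why the resulting constant is strictly larger than the $\U$-based bounds of the general theorem. The main ``obstacle'', if any, is therefore just to carry out the minimization cleanly and to verify that the critical point $\xi^\star$ lies in the admissible interval $(\delta_-,1/\delta_+)$, which holds as soon as $\delta_-\delta_+<1$ and reduces to trivial boundary cases otherwise.
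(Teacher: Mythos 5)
Your proposal is correct and takes essentially the same route as the paper: both start from Proposition~\ref{genlowercormatrix} (resp.\ Proposition~\ref{genlowercor}), bound $\beta_{k,j}$ by $\delta_-^{k-1-j}$ for $j<k$ and $\delta_+^{j-k}$ for $j>k$, complete the geometric series to reach $\frac{1}{\xi-\delta_-}+\frac{1}{1-\delta_+\xi}$ (which is exactly the paper's function $g$ in this case, after the poles at $\xi=1$ cancel), and minimize at the same critical point. The only difference is organizational: the paper establishes the $*$-version first and deduces \eqref{deltak-k+} via the square/square-root substitution of Section~\ref{sec:matrix_poly}, whereas you carry out the two minimizations in parallel.
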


We next indicate how the indices $k^+$ and $k^-$
should be chosen, for each $\nk\in\{1,\ldots, nd\}$,
in order to get the best lower bound $L_{\nkn}$. 

Let us use the notations of Theorem~\ref{thm:mainthmmatrox3},
so that $\alpha_1,\dots,\alpha_d$ are the tropical roots of 
the tropical polynomial $\trop p$ of~\eqref{eq:tropdefini},
$\alpha_0=0$ and $\alpha_{d+1}=+\infty$.
Let $k_0=0,k_1,\dots,k_q=d$ be the sequence of absciss\ae\ of the vertices
of the Newton polygon of $\trop p(x)$, as shown in Figure~\ref{newtonpolygon}.
For $j=1,\ldots,q$, we have,
$\alpha_{k_{j-1}+1}=\dots=\alpha_{k_{j}}<\alpha_{{k_j}+1}$. 
We also denote by
\begin{align}
\label{e-def-delta}
 \delta_j=\frac{\alpha_{k_j}}{\alpha_{k_{j}+1}}\enspace,
\end{align}
for $j=0,\ldots,q$, the parameters measuring the {\em separation} between the tropical roots,
in particular $\delta_0=\delta_q=0$.

\begin{figure}[htbp]
\begin{center}
\begin{picture}(0,0)%
\includegraphics{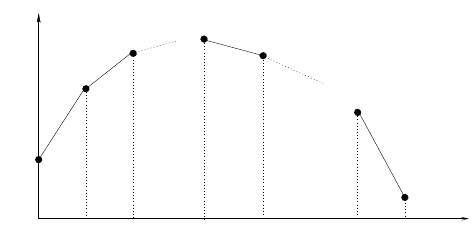}%
\end{picture}%
\setlength{\unitlength}{1657sp}%
\begingroup\makeatletter\ifx\SetFigFont\undefined%
\gdef\SetFigFont#1#2#3#4#5{%
  \reset@font\fontsize{#1}{#2pt}%
  \fontfamily{#3}\fontseries{#4}\fontshape{#5}%
  \selectfont}%
\fi\endgroup%
\begin{picture}(8937,4566)(1741,-6574)
\put(1756,-6436){\makebox(0,0)[lb]{\smash{{\SetFigFont{9}{10.8}{\rmdefault}{\mddefault}{\updefault}{\color[rgb]{0,0,0}$k_0=0$}%
}}}}
\put(3241,-6451){\makebox(0,0)[lb]{\smash{{\SetFigFont{9}{10.8}{\rmdefault}{\mddefault}{\updefault}{\color[rgb]{0,0,0}$k_1$}%
}}}}
\put(2836,-4696){\makebox(0,0)[lb]{\smash{{\SetFigFont{9}{10.8}{\rmdefault}{\mddefault}{\updefault}{\color[rgb]{0,0,0}$-\log\alpha_1$}%
}}}}
\put(4231,-6451){\makebox(0,0)[lb]{\smash{{\SetFigFont{9}{10.8}{\rmdefault}{\mddefault}{\updefault}{\color[rgb]{0,0,0}$k_2$}%
}}}}
\put(5536,-6451){\makebox(0,0)[lb]{\smash{{\SetFigFont{9}{10.8}{\rmdefault}{\mddefault}{\updefault}{\color[rgb]{0,0,0}$k_{j-1}$}%
}}}}
\put(6751,-6451){\makebox(0,0)[lb]{\smash{{\SetFigFont{9}{10.8}{\rmdefault}{\mddefault}{\updefault}{\color[rgb]{0,0,0}$k_{j}$}%
}}}}
\put(8461,-6451){\makebox(0,0)[lb]{\smash{{\SetFigFont{9}{10.8}{\rmdefault}{\mddefault}{\updefault}{\color[rgb]{0,0,0}$k_{q-1}$}%
}}}}
\put(9091,-4741){\makebox(0,0)[lb]{\smash{{\SetFigFont{9}{10.8}{\rmdefault}{\mddefault}{\updefault}{\color[rgb]{0,0,0}$-\log\alpha_{d}$}%
}}}}
\put(6211,-2761){\makebox(0,0)[lb]{\smash{{\SetFigFont{9}{10.8}{\rmdefault}{\mddefault}{\updefault}{\color[rgb]{0,0,0}$-\log\alpha_{k_j}$}%
}}}}
\put(2701,-2311){\makebox(0,0)[lb]{\smash{{\SetFigFont{9}{10.8}{\rmdefault}{\mddefault}{\updefault}{\color[rgb]{0,0,0}$\log\|A_k\|$}%
}}}}
\put(9541,-6451){\makebox(0,0)[lb]{\smash{{\SetFigFont{9}{10.8}{\rmdefault}{\mddefault}{\updefault}{\color[rgb]{0,0,0}$k_{q}=d$}%
}}}}
\put(10216,-5866){\makebox(0,0)[lb]{\smash{{\SetFigFont{9}{10.8}{\rmdefault}{\mddefault}{\updefault}{\color[rgb]{0,0,0}$k$}%
}}}}
\end{picture}%

\end{center}
\caption{Newton polygon corresponding to $\trop p(x)$.}
\label{newtonpolygon}
\end{figure}

\begin{proposition}\label{prop-optim}
Each of the constants $L_k$ and $L_k^{*}$ appearing in Theorem~\ref{thm:mainthmmatrox3},
is maximized by choosing 
$k^-=k_r$ for some $0\leq r\leq q$ such that $k_r<k$
and $k^+=k_s+1$ for  some $0\leq s\leq q$ such that $k_s\geq k$.
\end{proposition}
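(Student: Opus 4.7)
The plan is to exploit the fact that $L_k = \max(L_k^+, L_k^-)$, where $L_k^+$ depends only on $k^+$ (once $k$ is fixed) and $L_k^-$ depends only on $k^-$. This decouples the joint optimization over $(k^-,k^+)$ into two independent one-variable problems, and similarly for $L_k^* = \max(L_k^{+*}, L_k^{-*})$. Thus it suffices to show that the optimal $k^+$ has the form $k_s+1$ with $k_s\geq k$, and the optimal $k^-$ has the form $k_r$ with $k_r<k$.

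The main ingredient is the elementary monotonicity fact: for every fixed $\delta\in[0,1]$, the map $m\mapsto \U(m,\delta)$ is non-decreasing on $\mathbb{N}$. The case $\delta=1$ is trivial by the convention $\U(m,1)=+\infty$; for $\delta<1$, this follows directly from formula~\eqref{defU}, since $\E(m)=(1+1/m)^m$ is an increasing sequence and $\sqrt{m^2(1-\delta)^2+4\delta}$ is non-decreasing in $m$.

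Next I would use the plateau structure of the tropical roots. By the definition of the vertex absciss\ae\ $k_0,\ldots,k_q$, the sequence $\alpha_{k_{j-1}+1}=\cdots=\alpha_{k_j}$ is constant on each plateau. Fixing a plateau index $j$ and letting $k^+$ vary in $[k_{j-1}+1,k_j]$, the parameter $\delta_+=\alpha_k/\alpha_{k^+}$ remains constant while $k^+-k-1$ strictly increases, so by the monotonicity above $L_k^+=1/\U(k^+-k-1,\delta_+)$ is non-increasing in $k^+$ on the plateau. Therefore its maximum on this plateau is attained at the left endpoint $k^+=k_{j-1}+1$, which has exactly the desired form $k_s+1$ with $s=j-1$. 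The constraint $k_s\geq k$ follows from the admissibility condition $k^+>k$: if $k$ itself lies in plateau $j^*$, so that $k_{j^*-1}<k\leq k_{j^*}$, then any $k^+$ in the same plateau yields $\delta_+=1$ and $L_k^+=0$, so the non-trivial optimum is achieved only for $k^+\geq k_{j^*}+1$, that is $s\geq j^*$, hence $k_s\geq k_{j^*}\geq k$.

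A symmetric argument applies to $L_k^-$: on each plateau, $\delta_-=\alpha_{k^-}/\alpha_k$ is constant while $k-k^-$ decreases as $k^-$ grows, so $L_k^- = 1/\U(k-k^-,\delta_-)$ is non-decreasing in $k^-$ on the plateau; its maximum is attained at the right endpoint $k^-=k_j=k_r$ with $r=j$, and the constraint $k_r<k$ arises from the requirement that $k^-$ lie in a plateau strictly to the left of the one containing $k$ (otherwise $\delta_-=1$ and $L_k^-=0$). Finally, for the starred constants $L_k^{\pm *}$ the identical argument goes through verbatim with $\delta_\pm$ replaced by $\delta_\pm^2$, since squaring is a monotone map on $[0,1]$ and therefore preserves the plateau structure of $k^\pm\mapsto \delta_\pm^2$. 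The only non-trivial step is the monotonicity of $\U$ in its first argument, which I expect to be the mildest of obstacles, being a direct inspection of~\eqref{defU}.
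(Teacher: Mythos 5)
Your proposal is correct and follows essentially the same route as the paper: exploit that $\delta_\pm$ is constant while $k^\pm$ ranges over a plateau $[k_{j-1}+1,k_j]$ of the tropical roots, and invoke the monotonicity of $\U(\cdot,\delta)$ in its first argument to push $k^-$ to the right endpoint $k_r$ and $k^+$ to the left endpoint $k_{s}+1$. The paper's proof is terser (it asserts the monotonicity of $\U$ without the one-line verification and does not spell out the decoupling or the $\delta_\pm=1$ edge cases), but the argument is the same.
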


This proposition shows that, to apply Theorem~\ref{thm:mainthmmatrox3},
we may always require $k^-$ and $k^+-1$ to be absciss\ae\ of
vertices of the Newton polygon.
Optimizing the choice of $k^\pm$ ,
we readily arrive at the following corollary.

\begin{corollary}\label{cor-strongest}
Let us use the notations of  Theorem~\ref{thm:mainthmmatrox3}, and
let $k_0=0,k_1,\dots,k_q=d$ be the sequence of absciss\ae\ of \sloppy
the vertices of the Newton polygon of $\trop p(x)$, 
as shown in Figure~\ref{newtonpolygon}.
Then, the statements of  Theorem~\ref{thm:mainthmmatrox3} hold with the
constants $L_k$, $L_k^\pm$, $L_k^*$ and $L_k^{\pm *}$
 (given in~\eqref{e-def-Lk} and~\eqref{L_k_j_fro3}) replaced 
respectively by the following optimal ones:
\begin{subequations}
\label{L_k_j}
\begin{align}
L_{k}^{\text{opt}}:=& \max(L_{k}^{\pm ,\text{opt}}), \\
L_{k}^{+,\text{opt}}:=&\max_{j:\; k_j \geq k} \frac{1}{\U(k_j-k,\frac{\alpha_{\ent{k}}}{\alpha_{k_j+1}})},\\
L_{k}^{-,\text{opt}}:=&\max_{j:\; k_{j-1}< k}\frac{1}{\U(k-k_{j-1},\frac{\alpha_{k_{j-1}}}{\alpha_{\ent{k}}})}\enspace ,
\end{align}
\end{subequations}
and
\begin{subequations}
\label{L_k_j_fro}
\begin{align}
L_{k}^{*,\text{opt}}:=&\max(L_{k}^{\pm *,\text{opt}}), \\
L_{k}^{+*,\text{opt}}:=&\max_{j:\; k_j\geq k}\frac{1}{\sqrt{\U(k_j-k,(\frac{\alpha_{\ent{k}}}{\alpha_{k_j+1}})^2)}},\\
L_{k}^{-*,\text{opt}}:=&\max_{j:\; k_{j-1}< k}\frac{1}{\sqrt{\U(k-k_{j-1},
(\frac{\alpha_{k_{j-1}}}{\alpha_{\ent{k}}})^2)}}\enspace .
\end{align}
\end{subequations}
\end{corollary}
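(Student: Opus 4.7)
The plan is to deduce Corollary~\ref{cor-strongest} from Theorem~\ref{thm:mainthmmatrox3} by optimizing the choice of the auxiliary indices $k^\pm$, with the essential reduction provided by Proposition~\ref{prop-optim}. Since the left-hand side of the target inequality~\eqref{i_edge_bound} does not depend on $(k^-,k^+)$, I would begin by observing that whenever one has a family of valid lower bounds indexed by admissible pairs $(k^-,k^+)$ with $0\leq k^-<k<k^+\leq d+1$, the pointwise supremum over such pairs is again a valid lower bound. Moreover, because the constant produced by Theorem~\ref{thm:mainthmmatrox3} has the form $\max(L_k^+(k^+),L_k^-(k^-))$, in which $L_k^+$ depends only on $k^+$ and $L_k^-$ only on $k^-$, this supremum separates as $\max(\sup_{k^+}L_k^+(k^+),\sup_{k^-}L_k^-(k^-))$. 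The same observation applies verbatim to the Frobenius-type constants $L_k^{\pm*}$ of~\eqref{L_k_j_fro3}.

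Next I would invoke Proposition~\ref{prop-optim}, which asserts precisely that each of these suprema is attained at an index drawn from the vertex absciss\ae\ of the Newton polygon: an optimal $k^+$ has the form $k_s+1$ for some $s$ with $k_s\geq k$, and an optimal $k^-$ equals $k_r$ for some $r$ with $k_r<k$. This reduces the optimization to the finite index sets $\{k_s+1:k_s\geq k\}$ and $\{k_r:k_r<k\}$. Admissibility for these choices is automatic, since $k_0=0\leq k^-<k$ and $k<k^+\leq k_q+1=d+1$.

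The remaining step is a direct substitution into the formulas of Theorem~\ref{thm:mainthmmatrox3}. Taking $k^+=k_j+1$ yields $k^+-k-1=k_j-k$ and $\delta_+=\alpha_k/\alpha_{k_j+1}$, hence $L_k^+=1/\U(k_j-k,\alpha_k/\alpha_{k_j+1})$; taking $k^-=k_{j-1}$ yields $k-k^-=k-k_{j-1}$ and $\delta_-=\alpha_{k_{j-1}}/\alpha_k$, hence $L_k^-=1/\U(k-k_{j-1},\alpha_{k_{j-1}}/\alpha_k)$. Maximizing over $j$ on each side produces exactly the formulas~\eqref{L_k_j}, and the identical argument with $\delta_\pm$ replaced by $\delta_\pm^2$ and $\U(\cdot,\cdot)$ by $\sqrt{\U(\cdot,\cdot)}$ produces~\eqref{L_k_j_fro}. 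The conceptual work has been concentrated in Theorem~\ref{thm:mainthmmatrox3} and Proposition~\ref{prop-optim}, so I do not anticipate any genuine obstacle for the present corollary: it is essentially a bookkeeping step asserting that the pointwise maximum of finitely many valid bounds is itself a valid bound.
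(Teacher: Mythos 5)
Your proposal is correct and follows exactly the route the paper takes: the paper dismisses this corollary as a "straightforward consequence of Theorem~\ref{thm:mainthmmatrox3} and Proposition~\ref{prop-optim}", and your argument simply spells out the same steps (the maximum of finitely many valid lower bounds is a valid lower bound, Proposition~\ref{prop-optim} restricts the search to vertex absciss\ae, and direct substitution of $k^+=k_j+1$, $k^-=k_{j-1}$ yields the stated formulas). No gap; nothing further is needed.
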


However, a simpler choice of $k^\pm$ consists in taking the nearest 
vertices of the Newton polygon in~\eqref{L_k_j} and~\eqref{L_k_j_fro},
which lead to the following corollary.

\begin{corollary}\label{cor-prox}
Let us use the notations of  Theorem~\ref{thm:mainthmmatrox3},
let  $k_0=0,k_1,\dots,k_q=d$ be the sequence of absciss\ae\ of 
the vertices of the Newton polygon of $\trop p(x)$, 
as shown in Figure~\ref{newtonpolygon}, and 
let $\delta_0,\ldots, \delta_q$ be defined by~\eqref{e-def-delta}.
For $\nk\in\{1,\ldots, nd\}$ and $k=\nkn$,
let us consider the unique $j\in\{1,\ldots,q\}$ such that
$k_{j-1}< k\leq k_j $,
so that $\alpha_{\ent{k}}=\alpha_{k_{j-1}+1}=\alpha_{k_j}$.
Then, the statements of  Theorem~\ref{thm:mainthmmatrox3} hold with the
constants $L_k$, $L_k^\pm$, $L_k^*$ and $L_k^{\pm *}$
 (given in~\eqref{e-def-Lk} and~\eqref{L_k_j_fro3}) replaced 
respectively by the following ones:
\begin{subequations}
\label{L_k_j_prox}
\begin{gather}
L_{k}^{\text{prox}}:= \max(L_{k}^{\pm ,\text{prox}}), \\
L_{k}^{+,\text{prox}}:=\frac{1}{\U(k_j-k,\delta_j)},\quad
L_{k}^{-,\text{prox}}:=\frac{1}{\U(k-k_{j-1},\delta_{j-1})}\enspace ,
\end{gather}
\end{subequations}
and
\begin{subequations}
\label{L_k_j_fro_prox}
\begin{gather}
L_{k}^{*,\text{prox}}:=\max(L_{k}^{\pm *,\text{prox}}), \\
L_{k}^{+*,\text{prox}}:=\frac{1}{\sqrt{\U(k_j-k,\delta_j^2)}},\quad
L_{k}^{-*,\text{prox}}:=\frac{1}{\sqrt{\U(k-k_{j-1},\delta_{j-1}^2)}}\enspace .
\end{gather}
\end{subequations}
Moreover, in the particular case where $k=k_j$, we have 
\begin{align*}
L_{k}^{\text{prox}} \geq L_{k}^{+,\text{prox}}=\frac{1-\sqrt{\delta_{j}}}{1+\sqrt{\delta_{j}}},
\qquad
L_{k}^{*,\text{prox}} \geq L_{k}^{+*,\text{prox}}=\sqrt{\frac{1-\delta_{j}}{1+\delta_{j}}}\enspace.
\end{align*}
\end{corollary}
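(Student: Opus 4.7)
The plan is to deduce Corollary~\ref{cor-prox} as a direct specialization of Theorem~\ref{thm:mainthmmatrox3}, corresponding to the choice in which $k^-$ and $k^+-1$ are the two absciss\ae\ of vertices of the Newton polygon immediately adjacent to $k$. Concretely, for the unique $j\in\{1,\ldots,q\}$ with $k_{j-1}<k\le k_j$, I would set $k^-:=k_{j-1}$ and $k^+:=k_j+1$. These indices satisfy $0\leq k^-<k<k^+\leq d+1$, as required in Theorem~\ref{thm:mainthmmatrox3}, using the convention $\alpha_{d+1}=+\infty$ when $k^+=d+1$ (i.e., $j=q$), and $\alpha_0=0$ when $k^-=0$ (i.e., $j=1$).

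Next, I would exploit the fact that, by the definition of the vertex absciss\ae\ and of the numbers $\delta_j$ in~\eqref{e-def-delta}, the tropical roots are constant on the interval $[k_{j-1}+1,k_j]$ and jump at the endpoints: $\alpha_{k_{j-1}+1}=\cdots=\alpha_{k_j}$, so that $\alpha_k=\alpha_{k_j}$. Therefore, the ratios appearing in Theorem~\ref{thm:mainthmmatrox3} become
\begin{equation*}
\delta_-=\frac{\alpha_{k^-}}{\alpha_k}=\frac{\alpha_{k_{j-1}}}{\alpha_{k_j}}=\delta_{j-1},\qquad
\delta_+=\frac{\alpha_k}{\alpha_{k^+}}=\frac{\alpha_{k_j}}{\alpha_{k_j+1}}=\delta_j,
\end{equation*}
while the dimensions of the intervals reduce to $k-k^-=k-k_{j-1}$ and $k^+-k-1=k_j-k$. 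Plugging these values into the formulas~\eqref{e-def-Lk} and~\eqref{L_k_j_fro3} of Theorem~\ref{thm:mainthmmatrox3} yields exactly the constants $L_k^{\pm,\text{prox}}$ and $L_k^{\pm*,\text{prox}}$ of~\eqref{L_k_j_prox} and~\eqref{L_k_j_fro_prox}, and hence the corresponding lower bounds on $|\zeta_1\cdots\zeta_{nk}|/(\alpha_1\cdots\alpha_k)^n$.

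Finally, for the special case $k=k_j$, only the $(+)$-branch contributes a simple closed form because $k_j-k=0$. Here I would just evaluate $\U(0,\cdot)$ from its definition~\eqref{defU}, using $\E(0)=1$:
\begin{equation*}
\U(0,\delta)=\frac{1+\delta+2\sqrt{\delta}}{1-\delta}=\frac{(1+\sqrt{\delta})^2}{(1-\sqrt{\delta})(1+\sqrt{\delta})}=\frac{1+\sqrt{\delta}}{1-\sqrt{\delta}}.
\end{equation*}
Substituting $\delta=\delta_j$ (resp.\ $\delta=\delta_j^2$) in the formulas for $L_k^{+,\text{prox}}$ (resp.\ $L_k^{+*,\text{prox}}$) then yields the announced expressions $(1-\sqrt{\delta_j})/(1+\sqrt{\delta_j})$ and $\sqrt{(1-\delta_j)/(1+\delta_j)}$. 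The inequalities $L_k^{\text{prox}}\geq L_k^{+,\text{prox}}$ and $L_k^{*,\text{prox}}\geq L_k^{+*,\text{prox}}$ are immediate from the definitions~\eqref{L_k_j_prox} and~\eqref{L_k_j_fro_prox} as maxima.

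There is essentially no difficulty in this proof beyond bookkeeping; the only point requiring care is the treatment of the boundary cases $j=1$ and $j=q$, where $k^-=0$ or $k^+=d+1$, so that $\delta_{j-1}$ or $\delta_j$ equals $0$. These are precisely the values made consistent by the conventions $\alpha_0=0$, $\alpha_{d+1}=+\infty$ and $\delta_0=\delta_q=0$ already fixed in the statement, so Theorem~\ref{thm:mainthmmatrox3} applies without modification.
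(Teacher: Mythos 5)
Your proof is correct and follows the same route the paper intends: the authors simply remark that Corollaries~\ref{cor-strongest} and~\ref{cor-prox} are straightforward consequences of Theorem~\ref{thm:mainthmmatrox3}, and your specialization $k^-=k_{j-1}$, $k^+=k_j+1$ (together with the identifications $\delta_-=\delta_{j-1}$, $\delta_+=\delta_j$ and the direct evaluation $\U(0,\delta)=(1+\sqrt\delta)/(1-\sqrt\delta)$) supplies exactly the bookkeeping the paper leaves implicit.
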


\begin{remark}\rm
When all the ratios $\delta_1,\dots,\delta_{q-1}$ are small, the maxima in~\eqref{L_k_j} and~\eqref{L_k_j_fro} are attained by taking $j$ as in
Corollary~\ref{cor-prox}, that is $L_{k}^{\text{opt}}=L_{k}^{\text{prox}}$
and $L_{k}^{*,\text{opt}}=L_{k}^{*,\text{prox}}$ for all $0< k\leq d$.
\end{remark}

\begin{remark}\rm
Since $\U(k,\delta)$ is increasing in $k$ and $\delta$,
the maximizing $j$ in the definition of $L_{k}^{+*,\text{opt}}$ 
arises from a compromise between keeping $k_j\geq k$ close to $k$
and $\delta_j$ small. In particular, when $k$ belongs
to an edge of the Newton polygon such that several consecutive edges
have almost the same slope than this edge, the maximizing
$j$ may be the one corresponding to the first vertex
at which the slope changes significantly, i.e., the first
one such that $\delta_j$ is small. Similar considerations
apply to $L_{k}^{-*,\text{opt}}$ 
\end{remark}

\section{Proof of the lower bounds}\label{sec-prooflb}
\subsection{Proof of the lower bounds for the roots of scalar polynomials}
\label{sec:scalar_poly}
In this section, we prove the main 
lower bounds of Section~\ref{sec:Main_results}
for scalar polynomials, since the arguments are more transparent in this case.
The generalization to the matrix case
will be given in the next section.

From Remark~\ref{rk-scalar}, in the scalar case, all lower bounds
reduce to~\eqref{i_edge_bound-scalar} with the constant $L_k$
obtained under Assumption~\ref{assump2},
that is~\eqref{nnz_bound_fro} in Theorem~\ref{thm:mainthmnnz},
\eqref{gen_polya_fro} in Theorem~\ref{thm:mainthmmatrox},
\eqref{L_k_j_fro3} in Theorem~\ref{thm:mainthmmatrox3},
and~\eqref{deltak-k+-fro} in Theorem~\ref{thm:mainthmpoly3}.

\par{\it Proof of the scalar version of Theorem~\ref{thm:mainthmmatrox}}. \ignorespaces
We note first that P\'olya's inequality is an immediate consequence of
Proposition~\ref{genlowercor}. Indeed, using the property that
all the $\beta_{k,j}$ are less than or equal to $1$, we obtain:
\[(L_k)^{-2}\leq \sum_{j=0}^d \xi^{j-k} 
\leq \sum_{j=0}^\infty \xi^{j-k} 
= \frac{1}{\xi^{k}(1-\xi)}\enspace,
\]
for all $\xi>0$.
The minimum of the right hand side of the previous inequality
for $0<\xi<1$, is attained for
$\xi=k/(k+1)$, from which we deduce~\eqref{eq:polyabound},
which is also the scalar version of Theorem~\ref{thm:mainthmmatrox}.

We can now deduce similarly, from Proposition~\ref{genlowercor},
the scalar versions of the lower bounds of Theorems~\ref{thm:mainthmnnz},
\ref{thm:mainthmmatrox3}, and~\ref{thm:mainthmpoly3}.
Since in the scalar case, we are reduced
to show~\eqref{i_edge_bound-scalar} for some constants $L_k$,
and that this inequality is precisely the statement of 
Proposition~\ref{genlowercor},
we only need to show that these constants $L_k$ are lower bounds of
the constant $L_k$ of~\eqref{generalLk}.

\par{\it Proof of the scalar version of Theorem~\ref{thm:mainthmnnz}}. \ignorespaces
Using the property that $\beta_{k,j}\leq 1$ for all
$k,j$, we obtain that the constant $L_k$ of~\eqref{generalLk}
satisfies $(L_k)^{-2}\leq \sum_{j=0, a_j\neq 0}^d \xi^{j-k} $ for all $\xi>0$.
When $\xi=1$, the right hand side of this inequality is equal to
the number of non-zero coefficients of $p$, which shows that 
the constant $L_k$ of~\eqref{generalLk} is
lower bounded by the constant $L_k$ of~\eqref{nnz_bound_fro},
which implies~\eqref{i_edge_bound-scalar} with this lower bound $L_k$.
\endproof

\par{\it Proof of the scalar version of Theorem~\ref{thm:mainthmmatrox3}}. \ignorespaces
Assume that $0\leq k^-<k\leq k^+-1\leq d$ are integers.
Denote $\delta_-=\alpha_{k^-}/\alpha_k$ and $\delta_+=\alpha_k/\alpha_{k^+}$.
Then, for all $1\leq \ell\leq  k$,  $\alpha_{\ell}\leq \alpha_k$,
and for all $1\leq \ell\leq  k^-$, 
$\alpha_{\ell}\leq \alpha_{k^-}= \alpha_k\delta_-$.
This implies that
$\beta_{k,j}\leq \delta_-^{k^--j}$,   for all $j\leq k^-$.
Similarly,
for all $\ell\geq k$, $\alpha_{\ell}\geq \alpha_{k}$,
and for all $\ell\geq k^+$, $\alpha_{\ell}\geq \alpha_{k^+}= \alpha_k/\delta_+$,
hence, for all $j\geq k^+$,
$\beta_{k,j}\leq \delta_+^{j-k^++1}$.
Since we also have $\beta_{k,j}\leq 1$ for all $j,k$,
we obtain that 
the constant $L_k$ of~\eqref{generalLk}
satisfies, for all $\delta_-^2<\xi<\delta_+^{-2}$, $\xi\neq 1$,
\begin{eqnarray*}
 (L_k)^{-2}&\leq &\left(\sum_{j=0}^{k^-} \delta_-^{2(k^--j)} \xi^{j-k}\right) +
\left( \sum_{j=k^-+1}^{k^+-1} \xi^{j-k} \right)
+\left( \sum_{j=k^+}^d \delta_+^{2(j-k^++1)}\xi^{j-k}\right)
\\
&\leq & \xi^{k^--k+1} \frac{1}{\xi-\delta_-^2} 
+ \frac{\xi^{k^--k+1}-\xi^{k^+-k}}{1-\xi}
+\xi^{k^+-k-1}\frac{\delta_+^2\xi}{1-\delta_+^2\xi} \enspace.
\end{eqnarray*}
The later inequality can be written as
\begin{equation}\label{ineq_gen}
 (L_k)^{-2}\leq g(\xi),\quad \forall \delta_-^2<\xi<\delta_+^{-2},\; \xi\neq 1,
\end{equation} with 
\begin{eqnarray*}
g(\xi)&:=& g_-(\xi)+g_+(\xi)\enspace,\\
g_-(\xi)&:=& \xi^{k^--k+1} \left(\frac{1}{\xi-\delta_-^2}
+\frac{1}{1-\xi}\right)\enspace,\\
g_+(\xi)&:=& 
\xi^{k^+-k-1} \left( \frac{1}{\xi-1} +\frac{1}{1-\delta_+^2\xi}\right)\enspace.
\end{eqnarray*}
Note that~\eqref{ineq_gen} also holds when $k^-=0$ or $k^+=d+1$,
since then $\delta_-=0$ or $\delta_+=0$ respectively.
When $\delta_-=1$ and $\delta_+<1$, the conditions on $\xi$
in~\eqref{ineq_gen} are equivalent to $1<\xi< \delta_+^{-2}$, 
whereas when $\delta_-=\delta_+=1$, these conditions are never 
satisfied, but in this case the
constant $L_k$ of~\eqref{L_k_j_fro3} is equal to $0$
so there is nothing to prove.

The functions $g_-$ and $g_+$ satisfy 
$g_-(\xi)= g_{k-k^-,\delta_-^2}(\xi^{-1})$ and
$g_+(\xi)=g_{k^+-k-1,\delta_+^2}(\xi)$
where for $k\geq 0$ and $0\leq \delta\leq 1$,
 $g_{k,\delta}$ is defined as a function of 
$\xi\in (0, \delta^{-1})\setminus \{1\}$ by:
\[ g_{k,\delta}(\xi):=\xi^{k} \left( \frac{1}{\xi-1} +\frac{1}{1-\delta\xi}\right)= \xi^{k+1} \frac{1-\delta}{(\xi-1)(1-\delta\xi)} \enspace.\]
We have $g_{k,\delta}(\xi)\leq 0$ for $0<\xi<1$, hence 
$g_-(\xi)\leq 0$ for all
$\xi> 1$, and $g_+(\xi)\leq 0$ for all $\xi<1$.
When $0\leq \delta<1$, the minimum of $g_{k,\delta}$ on $(1,\delta^{-1})$ is attained at 
\[ \xi_{k,\delta}:= 
\begin{cases}
\frac{k(\delta+1)-\sqrt{k^2(\delta+1)^2-4\delta(k^2-1)}}{2 (k-1) \delta}
&\text{when}\; k\neq 1,\; \delta\neq 0\enspace,\\
\frac{k+1}{k(1+\delta)}&\text{otherwisewhen}\; \delta=0\enspace.
\end{cases}\]
The last formula gives $\xi_{k,\delta}=\infty$ when $\delta=0$ and
$k=0$, which is the point of infimum of $g_{k,\delta}$ for
 $1<\xi<\infty=\delta^{-1}$, since  $g_{k,\delta}$ is decreasing.
It also gives $\xi_{k,1}=1$ for all $k\geq 0$. Hence extending $g_{k,\delta}$,
$g_-$ and $g_+$ by $+\infty$ at point $1$, we get that the
infimum of $g_{k,\delta}$  on $(1,\delta^{-1})$ equals $g_{k,\delta}(\xi_{k,\delta})$,
and denoting $\xi_-=(\xi_{k-k^-,\delta_-^2})^{-1}$ and $\xi_+=\xi_{k^+-k-1,\delta_+^2}$,
we obtain 
\begin{equation}\label{firstlk}
 (L_k)^{-2}\leq \min( g(\xi_-),g(\xi_+))\leq \min( g_-(\xi_-),g_+(\xi_+))
\enspace.\end{equation}
In order to simplify this bound, we need to find good estimates of 
$\xi_{k,\delta}$.

When $k\neq 1$ and $\delta\neq 0$, we have:
\[ %
\xi_{k,\delta} =
\frac{k(\delta+1)-\sqrt{k^2(1-\delta)^2+4\delta}}{2 (k-1) \delta}%
= \frac{2(k+1)}{k(\delta+1)+\sqrt{k^2(1-\delta)^2+4\delta}}\enspace.
\]%
Using the property that $\delta\geq 0$ in the last formula, we get that
$\xi_{k,\delta} \leq \frac{k+1}{k}=\xi_{k,0}$.
Moreover, this inequality also holds for $k=1$.
In particular $(\xi_{k,\delta})^k\leq \E(k)$, for all $k\geq 0$ and $\delta\geq 0$ (taking the convention that $\xi^0=1$ for all $\xi\in (1,\infty]$).
We also have, for $k\geq 0$ and $0\leq \delta<1$,
\[ \frac{1}{\xi_{k,\delta}-1} +\frac{1}{1-\delta\xi_{k,\delta}}
=\frac{1+\delta+\sqrt{k^2(1-\delta)^2+4\delta}}{1-\delta}
\leq k+\frac{1+\sqrt{\delta}}{1-\sqrt{\delta}}\enspace.\]
This yields
\[ g_{k,\delta}(\xi_{k,\delta})\leq \U(k,\delta)
\quad\forall k\geq 0,\;0\leq \delta\leq 1,\]
with $\U$  as in~\eqref{defU}.
Indeed, this inequality holds for $\delta<1$ by the above arguments.
It also holds for $\delta=1$ since $\U(k,1)=+\infty$.
This implies in particular that 
$ g_+(\xi_+) \leq  \U(k^+-k-1,\delta_+^2) $ and 
$g_-(\xi_-)  \leq \U(k-k^-, \delta_-^2)$.
Combining these two inequalities with~\eqref{firstlk}, we 
obtain that the constant $L_k$ of~\eqref{generalLk} is
lower bounded by the constant $L_k^*$ of~\eqref{L_k_j_fro3},
which implies~\eqref{i_edge_bound-scalar} with this lower bound $L_k^*$
instead of $L_k$.
\endproof

\begin{remark}\rm 
In the previous proof, 
obtaining the minimum of $g$ instead of $g_-$ and $g_+$
would have led to a better 
 lower bound for $L_k$ than the one  of Theorem~\ref{thm:mainthmmatrox3}.
However, such a bound is more difficult to estimate
for general values of $k^+-k^-$.
We can use for instance the first inequality in~\eqref{firstlk},
which will give better estimates in some particular cases.
For instance, 
when $k=0$, we get that $g_{k,\delta}(\xi_{k,\delta})= \U(k,\delta)=
(1+\sqrt{\delta})/(1-\sqrt{\delta})$.
In particular, when $k=k^+-1$, the bound
$ (L_k)^{-2}\leq g_+(\xi_+)$ gives the lower bound $L_k^+$ 
of~\eqref{L_k_j_fro31}, with $\delta=\delta_+$.
However~\eqref{firstlk} gives the slightly better bound:
\[ (L_k)^{-2}\leq g(\xi_+)=\frac{1+\delta_+}{1-\delta_+}
-\frac{1-\delta_-^2}{(1-\delta_+\delta_-^2)(1-\delta_+)}
\delta_+^{k^+-k^-}\enspace.\]
\end{remark}

\par{\it Proof of the scalar version of Theorem~\ref{thm:mainthmpoly3}}. \ignorespaces
Let us use the notation of the previous proof.
When $k=k^+-1=k^-+1$, Inequality~\eqref{ineq_gen} is true for all
$\xi\in (\delta_-^2,\delta_+^{-2})$ since $g$ can be well defined 
at $1$ (by continuity for instance).
When $\delta_-\delta_+<1$, the interval $(\delta_-^2,\delta_+^{-2})$
is nonempty, the map $g$ is convex there 
and its minimum is achieved at the 
point $\xi=(1+\delta_{-}^2\delta_+)/(\delta_+(1+\delta_+))$.
Then~\eqref{ineq_gen} yields
$(L_k)^{-2}\leq  (1+\delta_+)^2/(1-\delta_{-}^2\delta_+^2)$,
which implies that the constant $L_k$ of~\eqref{generalLk} is
lower bounded by the constant $L_k^{*\sharp}$ of~\eqref{deltak-k+-fro},
which implies~\eqref{i_edge_bound-scalar} with this lower bound $L_k^{*\sharp}$
instead of $L_k$.
Moreover, since the minimum of $g$ is less or equal to the right hand side 
of~\eqref{firstlk}, the proof of the scalar version of
 Theorem~\ref{thm:mainthmmatrox3} implies that 
$L_k^{*\sharp}\leq L_k^{*}$.
\endproof

\subsection{Proofs of the lower bounds for the eigenvalues
of matrix polynomials}
\label{sec:matrix_poly}
In this section, we prove the lower bounds which were stated in Section~\ref{sec:Main_results} for matrix polynomials, and proved in Section~\ref{sec:scalar_poly} in the special scalar case. 

\par{\it Proof of Theorems~\ref{thm:mainthmnnz}, \ref{thm:mainthmmatrox},
\ref{thm:mainthmmatrox3}, and~\ref{thm:mainthmpoly3}}. \ignorespaces
In Proposition~\ref{genlowercormatrix},
the formula~\eqref{generalLkmatrixfrob} of  $L_k$
coincides with~\eqref{generalLk},
except that the condition $a_j\neq 0$ is replaced by $A_j\neq 0$
and $\beta_{k,j}$ is replaced by $\beta_{\ent{k},j}$.
For $0<k\leq d$, and for integers $k^-$ and
$k^+$, the inequality $0\leq k^-<k\leq k^+-1\leq d$ 
is equivalent to $0\leq k^-<\ent{k}\leq k^+-1\leq d$.
Then, for all $0<k\leq d$, the inequalities obtained for $\beta_{\ent{k},j}$
in the proofs of  Theorems~\ref{thm:mainthmnnz}, 
\ref{thm:mainthmmatrox3}, and~\ref{thm:mainthmpoly3} remain true
and depend only on  $k^-$ and $k^+$ and not on $\ent{k}$.
Hence, the lower bounds of the constant $L_k$ of~\eqref{generalLk}
obtained in these proofs remain true for the constant  $L_k$ 
of~\eqref{generalLkmatrixfrob}. Combined with the assertion that
Inequality~\eqref{i_edge_bound} holds with this constant,
they now provide the lower bounds of Theorems~\ref{thm:mainthmnnz},
\ref{thm:mainthmmatrox3}, and~\ref{thm:mainthmpoly3} respectively,
in the case where $\|\cdot\|$ satisfies~\ref{assump2},
that is Inequality~\eqref{i_edge_bound} with $L_k$ replaced by 
the constant $L_k$ of~\eqref{nnz_bound_fro},
the constant $L_k^*$ of~\eqref{L_k_j_fro3}, and the constant $L_k^{*\sharp}$ 
of~\eqref{deltak-k+-fro}, respectively.

In the case of a norm satisfying only~\ref{assump1},
the formula~\eqref{generalLkmatrixfrob} of  $L_k$ is replaced 
by~\eqref{generalLkmatrix}, which means that everything behave as if 
$L_k$ were replaced by its square and the numbers $\alpha_k$ were replaced by 
their square roots in~\eqref{generalLkmatrixfrob}.
Hence the assertions of Theorems~\ref{thm:mainthmnnz}, 
\ref{thm:mainthmmatrox3}, and~\ref{thm:mainthmpoly3} 
in the case of Assumption~\ref{assump2}, are still true 
under Assumption~\ref{assump1}, up to this transformation of
the constant $L_k$ of~\eqref{nnz_bound_fro},
the constant $L_k^*$ of~\eqref{L_k_j_fro3}, and the constant $L_k^{*\sharp}$ 
of~\eqref{deltak-k+-fro}, respectively. 

The constant $L_k$ of~\eqref{nnz_bound_fro}
does not depend on the values of the constants $\alpha_k$,
hence the above transformation leads to the square of $L_k$, which is
exactly the constant $L_k$ of~\eqref{nnz_bound}, which finishes the proof of
Theorem~\ref{thm:mainthmnnz}.
The constant  $L_k^*$ of~\eqref{L_k_j_fro3}
is a function of $\delta_-=\frac{\alpha_{k^-}}{\alpha_{\ent{k}}}$
and $\delta_+=\frac{\alpha_{\ent{k}}}{\alpha_{k^+}}$.
Since replacing all numbers $\alpha_i$, $i=0,\ldots, d+1$,
by their square roots, reduces to replace the
numbers $\delta_-$ and $\delta_+$ by their square roots too,
the above transformation of the constant  $L_k^*$ of~\eqref{L_k_j_fro3}
consists in taking its square and replacing 
$\delta_-$ and $\delta_+$ by their square roots, which leads to the
constant  $L_k$ of~\eqref{e-def-Lk}, which finishes the the proof of
Theorem~\ref{thm:mainthmmatrox3}.
Similarly, the above transformation of the constant $L_k^{*\sharp}$ 
of~\eqref{deltak-k+-fro} leads to the constant $L_k^{\sharp}$ 
of~\eqref{deltak-k+}, which finishes the proof of
Theorem~\ref{thm:mainthmpoly3}.

Now Theorem~\ref{thm:mainthmmatrox} is an
immediate corollary of Theorem~\ref{thm:mainthmmatrox3}.
Indeed as said in Section~\ref{sec:Main_results}, taking $k^-=0$, $k^+=d+1$
in Theorem~\ref{thm:mainthmmatrox3}, and using $L_k^-\leq L_k$, 
$L_k^{-*}\leq L_k^*$
and $\U(k,0)=\E(k) (k+1)$, we get Theorem~\ref{thm:mainthmmatrox}.
\endproof

Corollaries~\ref{cor-strongest}
and \ref{cor-prox} are straightforward consequences
of Theorem~\ref{thm:mainthmmatrox3} and Proposition~\ref{prop-optim}.
We only need to prove the latter proposition.

\par{\it Proof of Proposition~\ref{prop-optim}}. \ignorespaces
Let us use the notations of  Theorem~\ref{thm:mainthmmatrox3}, and
let $k_0=0,k_1,\dots,k_q=d$ be the sequence of absciss\ae\ of
the vertices of the Newton polygon of $\trop p(x)$, 
as shown in Figure~\ref{newtonpolygon}.
Then, the ratio $\delta_-$ does not change when $k^-$
moves inside an interval $[k_{r-1}+1,k_{r}]$.
Since $\U(\cdot,\delta)$ is a nondecreasing function,
in order to maximize $L_k^-$ or $L_k^{-*}$ and thus $L_k$,
with $\delta_-$ constant (and $\delta_+$ and $k^+$ constant),
we need to minimize $k-k^-$, which implies that
$k^-=k_r$ for some $0\leq r\leq q$ such that $k_r<k$.
Similarly, $k^+=k_s+1$ for  some $0\leq s\leq q$ such that $k_s\geq k$.
\endproof

\begin{remark}\rm\label{comparenorms}
All the lower bounds in Theorems~\ref{thm:mainthmnnz},~\ref{thm:mainthmmatrox},
\ref{thm:mainthmmatrox3}, and~\ref{thm:mainthmpoly3} for
$\nk=nk$ are equivalent to inequalities of the form
\begin{equation}\label{generallower}
(|\zeta_1\dots \zeta_{nk}|^{-1} |\det A_0|)^{1/n} L_k
\leq  \|A_0\|\alpha_1^{-1}\dots\alpha_k^{-1}\enspace.
\end{equation} 
By definition of the tropical roots, $\|A_0\|\alpha_1^{-1}\dots\alpha_k^{-1}=\exp \hat{p}_k$, where $\hat{p}_k$ is the value in $k$ of the concave hull 
of the map $j\in\{0,\ldots, d\}\mapsto \log \|A_j\|$.
Hence, if $\|\cdot\|$ and $\|\cdot\|'$ are two norms on the space of matrices
such that $\|A\|\leq \|A\|'$ for all $A\in\C^{n\times n}$, then 
the right hand side in~\eqref{generallower} is smaller for $\|\cdot\|$
than for $\|\cdot\|'$. 
Moreover, in~\eqref{nnz_bound},~\eqref{nnz_bound_fro},~\eqref{gen_polya},
and~\eqref{gen_polya_fro}, $L_k$ depends only on $P$ or $k$ and $n$, but not
on the norm $\|\cdot\|$, so that~\eqref{generallower}
is necessarily a tighter inequality for  $\|\cdot\|$ than for $\|\cdot\|'$. 
In particular if~\ref{assump2} holds with $Q$ and $Q'$, then
the lower bounds~\eqref{nnz_bound_fro} and~\eqref{gen_polya_fro}
for $\|\cdot\|$ are weaker 
than the corresponding ones for $A\mapsto \|Q A Q'\|_\nf$.
 For the 
lower bounds~\eqref{e-def-Lk}, \eqref{L_k_j_fro3}, \eqref{deltak-k+},
and~\eqref{deltak-k+-fro} 
of Theorems~\ref{thm:mainthmmatrox3} and~\ref{thm:mainthmpoly3},
and the ones of Corollaries~\ref{cor-strongest} and \ref{cor-prox},
 based on the separation between 
the tropical roots, the comparison is not so simple, because
changing the norm changes the separation between the tropical
roots.
\end{remark}
\section{Upper bound}\label{sec-newupperbound}
\subsection{Statement of the upper bound and corollary}\label{sec-upperbound}
The previous results provide lower bounds
for the product of the $\nk$ smallest eigenvalues,
in terms of tropical roots.
We next state a reverse inequality.
\begin{theorem}[Upper bound]
\label{thm:newthem}
Let $\|\cdot\|$ be any norm on the space of matrices $\C^{n\times n}$.
For all $i=1,\dots, n$, we denote 
by $A^{(i)}$ the $i$-th column of $A$,  
by $\eta_i$ the least positive constant
such that $\|A^{(i)}\|_2\leq \eta_i\|A\|$ for all $A\in \C^{n\times n}$
where $\|\cdot\|_2$ is the Euclidean norm of $\C^n$,
and we set $\eta:=\eta_1\cdots\eta_n$.
Let $A_0$,\ldots, $A_d$, $P$, $\zeta_1,\dots,\zeta_{nd}$, 
$c$, $\trop p$, 
$\alpha_1,\dots,\alpha_{d}$, be 
as in Theorem~\ref{thm:mainthmmatrox}, 
let $k_0=0,k_1,\dots,k_q=d$ be the sequence of all
absciss\ae\ of the vertices of the Newton polygon of $\trop p(x)$,
and define $\delta_0,\ldots,\delta_q$ as in~\eqref{e-def-delta}.
Denote by $C_{n,d,k}$ the number of maps $\phi:\{1,\ldots, n\}\to \{0,\ldots, d\}$ such that $\sum_{j=1}^n \phi(j)=k$, so that $C_{n,d,k}\leq (\min(d,k,nd-k)+1)^{n-1}$.

Then, for every $j=1,\ldots , q$, if
\begin{align}
c_j:=\frac{|\det A_{k_j}|}{\|A_{k_j}\|^n}-(C_{n,d,nk_j}-1)\eta \delta_{j}>0\enspace,
\label{assum-nonsig}
\end{align}
we have
\[
\frac{|\zeta_1\dots\zeta_{n{k_j}}|}{(\alpha_1\dots\alpha_{k_j})^n}
\leq
 \frac{c}{c_j}\binomial{nd}{n{k_j}}\enspace.
\]
\end{theorem}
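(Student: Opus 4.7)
The plan is to apply the scalar Ostrowski inequality~\eqref{eq:ostrowski} to the polynomial $\tilde p(\lambda):=\det P(\lambda)$, which has degree $nd$ and whose roots are exactly $\zeta_1,\dots,\zeta_{nd}$. To make this work, I must (i) bound from below the coefficient $\tilde a_{nk_j}$ of $\lambda^{nk_j}$ in $\tilde p$ and (ii) compare the product of the first $nk_j$ tropical roots of $\tilde p$ with $(\alpha_1\cdots\alpha_{k_j})^n$.

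First I would expand $\det P(\lambda)$ by multilinearity of the determinant in columns. Since the $i$-th column of $P(\lambda)$ is $\sum_j A_j^{(i)}\lambda^j$, we obtain
\[
\det P(\lambda)=\sum_{\phi:\{1,\dots,n\}\to\{0,\dots,d\}} \lambda^{\phi(1)+\cdots+\phi(n)}\, D_\phi\enspace,\quad D_\phi:=\det\bigl(A_{\phi(1)}^{(1)},\dots,A_{\phi(n)}^{(n)}\bigr)\enspace.
\]
The coefficient of $\lambda^{nk_j}$ splits as the diagonal term $\det A_{k_j}$ (coming from the constant map $\phi\equiv k_j$) plus a sum of $C_{n,d,nk_j}-1$ off-diagonal determinants. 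Hadamard's inequality together with the definition of $\eta$ gives $|D_\phi|\leq \eta\prod_i\|A_{\phi(i)}\|$, and Proposition~\ref{prop:coefbnd}, valid since $k_j$ is the abscissa of a vertex of the Newton polygon, bounds each $\|A_\ell\|$ in terms of $\|A_{k_j}\|$ and the $\alpha_m$'s. Using $\alpha_m\leq \alpha_{k_j}$ for $m\leq k_j$ and $\alpha_m\geq \alpha_{k_j+1}$ for $m>k_j$, a telescoping yields
\[
\prod_{i=1}^n \|A_{\phi(i)}\|\;\leq\;\|A_{k_j}\|^n \delta_j^{s}\enspace,\quad s:=\sum_{i:\phi(i)>k_j}(\phi(i)-k_j)=\sum_{i:\phi(i)<k_j}(k_j-\phi(i))\enspace,
\]
the equality of the two expressions of $s$ being forced by $\sum_i\phi(i)=nk_j$. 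For any non-constant $\phi$ one has $s\geq 1$, hence $\delta_j^s\leq \delta_j$. Summing over the $C_{n,d,nk_j}-1$ non-constant $\phi$ and applying the reverse triangle inequality gives $|\tilde a_{nk_j}|\geq c_j\|A_{k_j}\|^n>0$.

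Third, I would apply~\eqref{eq:ostrowski} to the polynomial $\tilde p$. Let $\tilde\alpha_1\leq\cdots\leq\tilde\alpha_{nd}$ denote its tropical roots. Since $(0,\log|\det A_0|)$ is the leftmost vertex of the Newton polygon of $\tilde p$, the standard identity relating the product of the first tropical roots to the concave hull of $k\mapsto \log|\tilde a_k|$ gives $\tilde\alpha_1\cdots\tilde\alpha_{nk_j}\leq |\det A_0|/|\tilde a_{nk_j}|$. Combining with the previous lower bound for $|\tilde a_{nk_j}|$,
\[
|\zeta_1\cdots\zeta_{nk_j}|\;\leq\;\binom{nd}{nk_j}\tilde\alpha_1\cdots\tilde\alpha_{nk_j}\;\leq\;\binom{nd}{nk_j}\frac{|\det A_0|}{c_j\|A_{k_j}\|^n}\enspace.
\]
Finally, a further application of Proposition~\ref{prop:coefbnd} (with the left-boundary vertex $j=0$) gives $\|A_0\|\leq \|A_{k_j}\|\alpha_1\cdots\alpha_{k_j}$; dividing the previous display by $(\alpha_1\cdots\alpha_{k_j})^n$ and using $|\det A_0|=c\|A_0\|^n$ yields exactly the claimed bound $\binom{nd}{nk_j}c/c_j$.

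I expect the main technical obstacle to be the telescoping estimate $\prod_i\|A_{\phi(i)}\|\leq \|A_{k_j}\|^n\delta_j^s$. It crucially exploits that $k_j$ is a vertex of the Newton polygon (so that Proposition~\ref{prop:coefbnd} supplies matching one-sided bounds) and that the constraint $\sum_i\phi(i)=nk_j$ balances upward and downward deviations, causing the exponents of $\alpha_{k_j}$ and $\alpha_{k_j+1}$ to collapse into a single power of the separation $\delta_j$. Everything else is a routine combination of Ostrowski's scalar bound with Hadamard's inequality.
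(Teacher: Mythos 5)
Your proposal is correct and follows exactly the paper's route: multilinear expansion of $\det P$, Hadamard's inequality plus Proposition~\ref{prop:coefbnd} to bound $|\tilde p_{nk_j}|$ from below by $c_j\|A_{k_j}\|^n$ (this is the paper's Lemma~\ref{lem:uppbound2}), Ostrowski's inequality~\eqref{eq:ostrowski} applied to $\det P$, the first part of Proposition~\ref{prop:coefbnd} applied to $\trop\tilde p$, and the vertex identity $\|A_0\|/\|A_{k_j}\|=\alpha_1\cdots\alpha_{k_j}$. The only minor imprecision is your parenthetical ``with the left-boundary vertex $j=0$'' — the bound $\|A_0\|\leq\|A_{k_j}\|\alpha_1\cdots\alpha_{k_j}$ comes from the second family of inequalities in Proposition~\ref{prop:coefbnd} taking $k_j$ as the vertex and evaluating at argument $0$, not from treating $0$ as the vertex; nonetheless the inequality you state is true and suffices.
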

Using the results of Section~\ref{sec:Main_results}
and Theorem~\ref{thm:newthem}, we are able to bound  from above and below 
the ratio between the modulus of an eigenvalue
and its corresponding tropical root, as follows.
\begin{corollary}\label{cor-bounds}
Let us use the notations of Theorems~\ref{thm:mainthmnnz}
and~\ref{thm:newthem}, and
assume that~\eqref{i_edge_bound} holds for all $\nk\in\{1,\ldots, nd\}$.
Then, under the assumptions of Theorem~\ref{thm:newthem},
we have, for all $1\leq j \leq q$,
such that $c_j>0$ and $c_{j-1}>0$, and all integer $\nk$ such that
$k_{j-1}<\nkn\leq k_{j}$
\begin{align}
\frac{c_{j-1}}{\binomial{nd}{n{k_{j-1}}}} (L_{k_{j-1}+\frac{1}{n}})^n
\leq \frac{|\zeta_{nk_{j-1}+1}|}{\alpha_{k_j}}
\leq \frac{|\zeta_{\nk}|}{\alpha_{k_j}}
\leq \frac{|\zeta_{nk_j}|}{\alpha_{k_j}}
\leq \frac{\binomial{nd}{n{k_{j}}}}{c_{j}} (L_{k_{j}-\frac{1}{n}})^{-n} \enspace.
\label{e-location}
\end{align}
In particular, if $n\geq 2$ and~\ref{assump2} holds, we have:
\begin{align}
\frac{c_{j-1}}{\binomial{nd}{n{k_{j-1}}}
\U(\frac{1}{n},\delta_{j-1}^2)^{n/2}}
\leq \frac{|\zeta_{\nk}|}{\alpha_{k_j}}
\leq \frac{\binomial{nd}{n{k_{j}}}\U(\frac{1}{n},\delta_j^2)^{n/2}}{c_{j}} 
\enspace.
\end{align}
\end{corollary}
Note that the bounds in the above corollary
do not depend on the constant $c$,
except if $j=1$ (in which case $c_{j-1}=c_0=c$).
Hence, by an argument of continuity, the assumption that $\det A_0\neq 0$,
which is present there (since the notations and assumptions of 
Theorem~\ref{thm:newthem} include the ones of the first 
part of Theorem~\ref{thm:mainthmnnz}), can be dispensed with, except when $j=1$.

\begin{remark}\rm \label{rem-tight-cond}
Since $1\geq \frac{|\det A_{k_j}|}{\|A_{k_j}\|^n}\geq (\kappa(A_{k_j}))^{-n}$,
where $\kappa$ denotes the condition number of a matrix 
with respect to the norm $\|\cdot\|$,
we get that Condition~\eqref{assum-nonsig} of Theorem~\ref{thm:newthem} holds
if the matrix $A_{k_j}$ is nonsingular and has a sufficiently small
condition number, and if the tropical
roots $\alpha_{k_j}$ and $\alpha_{k_{j+1}}$ are sufficiently
separated, so that $\delta_j = \alpha_{k_j}/\alpha_{k_{j+1}}\ll 1$.
Similarly, 
the tightness of the bounds in Corollary~\ref{cor-bounds} depends
on the parameters 
$\delta_{{j-1}},\delta_{j}$ and on the condition number of the matrices
$A_{k_{j-1}},A_{k_j}$. 
\end{remark}

\begin{remark}\label{rem-pellet}\rm
The bounds of Corollary~\ref{cor-bounds} require the $\delta_j$ to be 
very small. In this case, a different approach would be to use
the extensions of Pellet's theorem to the case of matrix 
polynomials~\cite{binisharify,melman,SharifyTisseur}.

Recall that in the scalar case, Pellet's theorem shows that for $k\not\in\{0,d\}$, if 
the polynomial $q(x)=|a_k|x^k - \sum_{i\neq k,\; 0\leq i\leq d}|a_i|x^i$ 
 has precisely two real positive roots $r(a)<R(a)$
(we know by Descartes' rule of sign it has $0$ or $2$ real positive roots),
then $p$ has precisely $k$ roots in the region $|x|\leq r(a)$ and $d-k$ roots 
in the region $|x|\geq R(a)$. 
Moreover, in~\cite{bini96}, it is shown that in  that case,
$k$ is the abscissa of a vertex of the Newton polygon of $p$,
and that tropical roots of $p$ are outside the interval $(\log r(a),\log R(a))$.
If Pellet's theorem can be applied to every abscissa $k$ of a vertex
of the Newton polygon of $p$, then each moduli of a root of $p$ can
be estimated using the corresponding tropical root.
This is in particular possible if $\delta_j<1/9$ for all
$j=1,\ldots, q$, by a result of Gaubert and Sharify (see~\cite[Theorem~3.3.3]{meisam2011}).

Using an extension of Pellet's theorem to the case of matrix polynomials, one may obtain a similar localisation of the eigenvalues when all the $\delta_j$ 
and condition numbers of the matrices $A_{k_j}$ are small enough,
 see for instance~\cite{binisharify} for the case where the coefficient matrices are unitary up to some constants,
and~\cite{SharifyTisseur} for the general case.

However,  it may happen that Pellet's theorem cannot be applied for any
vertex of the Newton polygon except the first and last one.
In this case, the log-majorization type inequalities
of Section~\ref{sec:Main_results} do provide 
a better information. Note that there is some duality
between Pellet type estimates and log-majorization
estimates: the former hold conditionnally, and give exclusion
annuli, whereas the latter hold inconditionnally, and allow
one to infer inclusion regions for the eigenvalues. 
For instance, it does not seem that tight constants
like the one in~\eqref{eq:polyabound} are achievable by Pellet type approaches.
\end{remark}

\subsection{Proof of the upper bound}\label{sec-proof-upperbound}
To prove Theorem~\ref{thm:newthem}, we first prove the following lemma which 
provides a lower bound for the modulus of the coefficients of 
the polynomial $\det(P(\lambda))$.
\begin{lemma}
\label{lem:uppbound2}
Let $A_0$,\ldots, $A_d$, $P$, $\|\cdot\|$, $\eta$,
$\alpha_1,\dots,\alpha_d$,
$q$, $\delta_0,\ldots,\delta_q$, and
 $C_{n,d,k}$ be as in Theorem~\ref{thm:newthem}, and denote by 
$\detp$ the polynomial: 
\begin{equation}
\label{detpolynom}
\detp(\lambda)=\det P(\lambda)=\sum_{l=0}^{nd}\detp_l\lambda^l \enspace.
\end{equation}
Then, for $j=0,\ldots, q$, we have $\|A_{k_j}\|>0$ and 
\[
|\detp_{nk_j}|\geq |\det A_{k_j}|-(C_{n,d,nk_j}-1)\eta \|A_{k_j}\|^n\delta_{j}\enspace.
\]
\end{lemma}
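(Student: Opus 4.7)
The plan is to expand $\det P(\lambda)$ by multilinearity in columns and then bound the off-diagonal contributions using the tropical data at the vertex $k_j$.

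First, I would write $P(\lambda)^{(i)} = \sum_{j=0}^{d} A_j^{(i)} \lambda^j$ for each column of $P(\lambda)$ and expand the determinant:
\begin{equation*}
\det P(\lambda) = \sum_{\phi\colon \{1,\ldots,n\}\to\{0,\ldots,d\}} \lambda^{\phi(1)+\cdots+\phi(n)} \det\bigl(A_{\phi(1)}^{(1)}, \ldots, A_{\phi(n)}^{(n)}\bigr).
\end{equation*}
Extracting the coefficient of $\lambda^{nk_j}$, I would isolate the ``diagonal'' term corresponding to $\phi \equiv k_j$, which contributes exactly $\det A_{k_j}$, and treat all remaining $C_{n,d,nk_j}-1$ tuples as perturbations to be bounded.

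For each such non-constant $\phi$ with $\sum_i \phi(i) = nk_j$, I would apply Hadamard's inequality together with the definition of $\eta_i$ to get
\begin{equation*}
\bigl|\det\bigl(A_{\phi(1)}^{(1)},\ldots,A_{\phi(n)}^{(n)}\bigr)\bigr| \leq \prod_{i=1}^n \|A_{\phi(i)}^{(i)}\|_2 \leq \eta \prod_{i=1}^n \|A_{\phi(i)}\|.
\end{equation*}
I would then invoke Proposition~\ref{prop:coefbnd} at the vertex abscissa $k_j$, which first yields $\|A_{k_j}\|>0$ (whence the initial assertion of the lemma), and second gives, for $\phi(i)\geq k_j$, $\|A_{\phi(i)}\|/\|A_{k_j}\| \leq \prod_{\ell=k_j+1}^{\phi(i)}\alpha_\ell^{-1} \leq \alpha_{k_j+1}^{-(\phi(i)-k_j)}$, and for $\phi(i)\leq k_j$, $\|A_{\phi(i)}\|/\|A_{k_j}\| \leq \prod_{\ell=\phi(i)+1}^{k_j}\alpha_\ell \leq \alpha_{k_j}^{k_j-\phi(i)}$, where the last majorations in each case come from the ordering $\alpha_1\leq\cdots\leq\alpha_d$.

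Multiplying these column bounds yields $\prod_i \|A_{\phi(i)}\|/\|A_{k_j}\| \leq \alpha_{k_j}^{S_-}\alpha_{k_j+1}^{-S_+}$, where $S_+ = \sum_{i:\phi(i)>k_j}(\phi(i)-k_j)$ and $S_- = \sum_{i:\phi(i)<k_j}(k_j-\phi(i))$. The constraint $\sum_i \phi(i) = nk_j$ forces $S_+ = S_-$, so the product simplifies to $\delta_j^{S_+}$; and since $\phi$ is non-constant, one must have $S_+\geq 1$ (otherwise all $\phi(i)\leq k_j$ would sum to strictly less than $nk_j$). Thus each perturbation is bounded by $\eta\|A_{k_j}\|^n\delta_j$. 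Summing and using the reverse triangle inequality $|\detp_{nk_j}| \geq |\det A_{k_j}| - |\detp_{nk_j}-\det A_{k_j}|$ delivers the stated inequality, with the boundary cases $j=0$ and $j=q$ handled automatically by the conventions $\delta_0=\delta_q=0$ and $C_{n,d,0}=C_{n,d,nd}=1$.

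The main obstacle is the clean passage from the individual column bounds to the single factor $\delta_j$: this hinges on the cancellation $S_+=S_-$ and on the fact that $k_j$ is a \emph{vertex} of the Newton polygon, so that Proposition~\ref{prop:coefbnd} applies with the choice ``$k=k_j$'' and the uniform majorants $\alpha_{k_j}$ and $\alpha_{k_j+1}$ can be pulled out on either side of the vertex.
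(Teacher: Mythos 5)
Your proposal is correct and follows essentially the same route as the paper's proof: multilinear expansion of the determinant over maps $\phi$, isolation of the constant map $\phi\equiv k_j$, Hadamard's inequality with the constants $\eta_i$, and the vertex bounds of Proposition~\ref{prop:coefbnd} combined with the constraint $\sum_i\phi(i)=nk_j$ to reduce each off-diagonal term to $\eta\|A_{k_j}\|^n\delta_j^{S_+}$ with $S_+\geq 1$. The only cosmetic difference is that the paper folds the two one-sided majorants into the single estimate $\|A_m\|\leq\|A_{k_j}\|\alpha_{k_j}^{k_j-m}\delta_j^{(m-k_j)_+}$ before multiplying, whereas you keep $\alpha_{k_j}^{S_-}\alpha_{k_j+1}^{-S_+}$ and cancel via $S_+=S_-$; these are the same computation.
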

\begin{proof}
Let $k=0,\ldots , d$. Using the multilinearity of the determinant we get
\[
\detp(\lambda)=\det \sum_{j=0}^dA_j\lambda^j
=\sum_{\phi}\det(A_{\phi(1)}^{(1)},A_{\phi(2)}^{(2)},\dots,A_{\phi(n)}^{(n)})\lambda^{\sum_{m=1}^n\phi(m)}\enspace,
\]
where the sum is taken over all maps $\phi:\{1,\dots,n\}\to\{0,\dots,d\}$.
Denoting by $\Phi_k$ the set of all such maps that satisfy
$\sum_{m=1}^n\phi(m)=nk$, we obtain that the $nk$-th
 coefficient of the polynomial $\detp$ is equal to:
\begin{eqnarray*}
\detp_{nk}
&=&
\sum_{\phi\in \Phi_k}\det(A_{\phi(1)}^{(1)},A_{\phi(2)}^{(2)},\dots,A_{\phi(n)}^{(n)})\\
&=& \det A_k +\sum_{\phi\in\Phi_k,\; \phi\not\equiv k}\det(A_{\phi(1)}^{(1)},A_{\phi(2)}^{(2)},\dots,A_{\phi(n)}^{(n)})\enspace.
\label{eq:uppbrprv}
\end{eqnarray*}
Using Hadamard's inequality together with the definition of
$\eta$ yields
\begin{eqnarray*}
|\det (A_{\phi(1)}^{(1)},A_{\phi(2)}^{(2)},\dots,A_{\phi(n)}^{(n)})|
&\leq& \|A_{\phi(1)}^{(1)}\|_2\dots\|A_{\phi(n)}^{(n)}\|_2\leq \eta \|A_{\phi(1)}\|\dots\|A_{\phi(n)}\|\enspace.
\end{eqnarray*}
Assume now that $k=k_j$ for some $j=0,\ldots, q$. Then, $k$ is the 
abscissa of a vertex of the Newton polygon of the tropical polynomial $\trop p$
defined  in~\eqref{eq:tropdefini}.
By Proposition~\ref{prop:coefbnd}
applied to $\trop p$, we get that $\|A_k\|>0$,
$\|A_m\|\leq \|A_k\|\prod_{\ell=m+1}^k\alpha_{\ell}\leq
\|A_k\| \alpha_k^{k-m}$ for all $m\leq k$
and $\|A_m\|\leq \|A_k\|\prod_{\ell=k+1}^m\alpha_{\ell}^{-1}\leq\|A_k\|
\alpha_{k+1}^{k-m}\leq  \|A_k\| \alpha_k^{k-m}\delta_j^{m-k}$ for all $ m> k$.
Hence
\begin{eqnarray*}
\|A_{\phi(1)}\|\dots\|A_{\phi(n)}\| &
\leq &\|A_{k}\|^n\alpha_k^{\sum_{m=1}^n(k-\phi(m))}
\delta_j^{\sum_{m=1,\phi(m)>k}^n(\phi(m)-k)}\\
&=&\|A_{k}\|^n\delta_j^{\sum_{m=1,\phi(m)>k}^n(\phi(m)-k)}\enspace,\end{eqnarray*}
 when $\sum_{m=1}^n\phi(m)=nk$.
When in addition $\phi\not\equiv k$, there exists $m=1,\ldots, n$ such that
$\phi(m)> k$, thus $\sum_{m=1,\phi(m)>k}^n(\phi(m)-k)\geq 1$, which yields
\[
\|A_{\phi(1)}\|\dots\|A_{\phi(n)}\|\leq \|A_{k}\|^n\delta_j\enspace.
\]
From all the above inequalities, we deduce that
\[
|\detp_{nk}|\geq |\det A_k|-\sum_{\phi\in\Phi_k,\; \phi\not\equiv k}\eta \|A_k\|^n\delta_j
\enspace.
\]
Since  by definition $C_{n,d,nk}$ is the cardinality of $\Phi_k$,
and there exists exactly one element of $\Phi_k$ such that
$\phi\equiv k$, we obtain the inequality of the lemma for $k=k_j$.
\end{proof}

\par{\it Proof of Theorem~\ref{thm:newthem}}. \ignorespaces
Consider the polynomial $\detp$ of~\eqref{detpolynom}
and let $\gamma_1,\dots,\gamma_{nd}$ be the tropical roots of the tropical 
polynomial $\trop \detp(x)=\max_{0\leq l\leq nd}|\detp_l|x^l$
arranged in non-decreasing order. Let $j=0,\ldots, q$ and  denote 
$k=k_j$. 
From~\eqref{eq:ostrowski}, we have
\begin{equation}\label{ostrovn}
|\zeta_1\dots\zeta_{n{k}}|\leq \binomial{nd}{n{k}} \gamma_1\dots\gamma_{n{k}}\enspace,
\end{equation}
By the first part of Proposition~\ref{prop:coefbnd} applied to the 
tropical polynomial $\trop \detp$, we get 
$|\detp_{nk}| \gamma_1\dots\gamma_{n{k}} \leq |\detp_0|$.
Applying the result of Lemma~\ref{lem:uppbound2},
and using the assumption of Theorem~\ref{thm:newthem} on $c_j$,
we get that $\detp_{nk}\geq c_j \|A_{k}\|^n>0$.
Since $\detp_0=|\det A_0|$, we deduce that
$\gamma_1\dots\gamma_{n{k}}\leq |\det A_0|/(c_j \|A_{k}\|^n)$.
Gathering this with~\eqref{ostrovn}, we get
\[
|\zeta_1\dots\zeta_{n{k}}|\leq \frac{\binomial{nd}{n{k}}}{c_j}
 \frac{|\det A_0|}{\|A_{0}\|^n}\frac{\|A_{0}\|^n}{\|A_{k}\|^n} \enspace.
\]
Since $k=k_j$ is the abscissa of a vertex of the Newton polygon of 
the tropical polynomial $\trop p$, we get from
Proposition~\ref{prop:coefbnd} applied to $\trop p$ that
$\frac{\|A_0\|}{\|A_{k}\|}=\alpha_1\dots\alpha_{k}$,
hence the previous inequality shows Theorem~\ref{thm:newthem}.
\endproof

\section{Numerical examples illustrating the lower bounds}
\label{sec:examples}
In this section, we illustrate the bounds by numerical examples.
We shall see in particular
that the lower bounds
stated in Section~\ref{sec:Main_results} can be tight when the tropical
roots are well separated and when the input matrices are well conditioned,
according to Remark~\ref{rem-tight-cond}.
All the numerical results of the present paper
were obtained using Scilab 4.1.2.
Note that similar 
results are obtained using  Matlab, specially the polyeig function.

\subsection{Illustration of the lower bounds for the roots of scalar polynomials}
\

In the following examples, for each scalar polynomial,
using the notations of Section~\ref{sec:Main_results},
we shall show in tables or plots
the following quantities with respect
to $k=1,\ldots, d$ for comparison:
- the ratio $\frac{|\zeta_1\dots \zeta_{k}|}{\alpha_1\dots\alpha_k}$,
denoted \col{ratio};
- the lower bound of P\'olya given in~\eqref{eq:polyabound},
which coincides with the constant $L_k$ of~\eqref{gen_polya_fro},
denoted \col{p\'olya};
- the lower bound $L_k$ of~\eqref{nnz_bound_fro} in Theorem~\ref{thm:mainthmnnz}, denoted \col{fewnom};
- the lower bound 
$L_k^{*,\text{prox}}$ of~\eqref{L_k_j_fro_prox} in Corollary~\ref{cor-prox},
based on the separation between the tropical roots,
denoted \col{separ}.
Also, in tables, the absciss\ae\ of the vertices of the Newton polygon of 
$\trop p$ are indicated by the symbol~$^*$.

\begin{example} \rm 
Consider the following scalar polynomial
\begin{equation}
\label{eq:polyexp}
p(x)=1-\exp(1)x-\exp(6)x^2+\exp(4)x^4+\exp(9)x^8+\exp(5)x^{10}+\exp(1)x^{16}\enspace.
\end{equation}
The log-exp transformation of its tropical polynomial $\trop p$ is the
tropical polynomial $f$ of Example~\ref{ex1}. The graph
of $f$ and the associated Newton polygon were
shown in Figure~\ref{fig:newtongraph}.
Hence, the tropical roots of $\trop p$ are the exponentials
of $-3$, $-1/2$, and $1$, with multiplicities $2$, $6$ and $8$, respectively.
Table~\ref{tbl:scalarpoly} shows the ratios and lower bounds,
as explained at the beginning of the section.
\begin{table}[htbp]
{\scriptsize
\begin{center}
\begin{tabular}{|l|l|c|l|l|}
\hline
$k$ & \col{ratio} & \col{fewnom} & \col{p\'olya} & \col{separ} \\
\hline
1   &  0.93475  &  0.37796  &  0.5      &  0.5      \\  
2$^*$   &  1.00034  &  \myquote  &  0.3849   &  0.92102  \\  
3   &  0.98782  &  \myquote  &  0.32476  &  0.49664  \\  
4   &  0.97926  &  \myquote  &  0.28622  &  0.38360  \\  
5   &  0.98348  &  \myquote  &  0.25880  &  0.32403  \\  
6   &  0.98771  &  \myquote  &  0.23802  &  0.37508  \\  
7   &  0.99383  &  \myquote  &  0.22156  &  0.47570  \\  
8$^*$   &  1.00000  &  \myquote  &  0.20810  &  0.79696  \\  
9   &  0.98811  &  \myquote  &  0.19683  &  0.47570  \\  
10  &  0.97636  &  \myquote  &  0.18721  &  0.37508  \\  
11  &  0.96476  &  \myquote  &  0.17888  &  0.31917  \\  
12  &  0.95329  &  \myquote  &  0.17158  &  0.28622  \\  
13  &  0.96476  &  \myquote  &  0.16510  &  0.32476  \\  
14  &  0.97636  &  \myquote  &  0.15930  &  0.3849   \\  
15  &  0.98811  &  \myquote  &  0.15407  &  0.5      \\  
16$^*$  &  1        &  \myquote  &  0.14933  &  1        \\  
\hline
\end{tabular}
\end{center}
}
\caption{Comparison of the lower bounds for the scalar polynomial $p$ of~\eqref{eq:polyexp}. }
\label{tbl:scalarpoly}
\end{table}

\end{example}

\begin{example} \rm 
Consider now the following scalar polynomial
\begin{align}
p(x)=&1-\exp(3)x-\exp(6)x^2-\exp(7)x^4-\exp(8)x^6+\exp(9)x^8\nonumber \\
&+\exp(7)x^{10}-\exp(4)x^{13}-\exp(3)x^{14}+\exp(1)x^{16}\enspace.\label{eq:polyexp2}
\end{align}
The log-exp transformation of its tropical polynomial $\trop p$ is again the
tropical polynomial $f$ of Example~\ref{ex1}, but now there are some points of
the graph of the map $k\mapsto \log |p_k|$, where 
$p_k$ is the $k$th coefficient of $p$, that are on the edges of 
its Newton polygon, that is its concave hull.
Table~\ref{tbl:scalarpoly2} shows the ratios and lower bounds for $p$.
\begin{table}[htbp]
{\scriptsize
\begin{center}
\begin{tabular}{|l|l|c|l|l|}
\hline
$k$ & \col{ratio} & \col{fewnom} & \col{p\'olya} & \col{separ} \\
\hline
1   &  0.61759  &  0.31623  &  0.5      &  0.5      \\  
2$^*$   &  0.98684  &  \myquote  &  0.3849   &  0.92102  \\  
3   &  0.84948  &  \myquote  &  0.32476  &  0.49664  \\  
4   &  0.73124  &  \myquote  &  0.28622  &  0.38360  \\  
5   &  0.63280  &  \myquote  &  0.25880  &  0.32403  \\  
6   &  0.54760  &  \myquote  &  0.23802  &  0.37508  \\  
7   &  0.72118  &  \myquote  &  0.22156  &  0.47570  \\  
8$^*$   &  0.95684  &  \myquote  &  0.20810  &  0.79696  \\  
9   &  0.76959  &  \myquote  &  0.19683  &  0.47570  \\  
10  &  0.61899  &  \myquote  &  0.18721  &  0.37508  \\  
11  &  0.57724  &  \myquote  &  0.17888  &  0.31917  \\  
12  &  0.53832  &  \myquote  &  0.17158  &  0.28622  \\  
13  &  0.62117  &  \myquote  &  0.16510  &  0.32476  \\  
14  &  0.71677  &  \myquote  &  0.15930  &  0.3849   \\  
15  &  0.84662  &  \myquote  &  0.15407  &  0.5      \\  
16$^*$  &  1        &  \myquote  &  0.14933  &  1        \\  
\hline
\end{tabular}
\end{center}
}
\caption{Comparison of the lower bounds for the scalar polynomial $p$ of~\eqref{eq:polyexp2}.}%
\label{tbl:scalarpoly2}
\end{table}
In Figure~\ref{fig_exemplescalar}, we plot in the same graph 
the values of the ratios and lower bounds for the 
 scalar polynomials $p$ of~\eqref{eq:polyexp} and~\eqref{eq:polyexp2}.
This indicates that the lower bound $L_k$ of Corollary~\ref{cor-prox},
based on the separation between tropical roots,
may become tighter when the polynomial has non zero coefficients 
between the vertices of the Newton polygon.
\begin{figure}[htbp]
\centering
\includegraphics[scale=0.4]{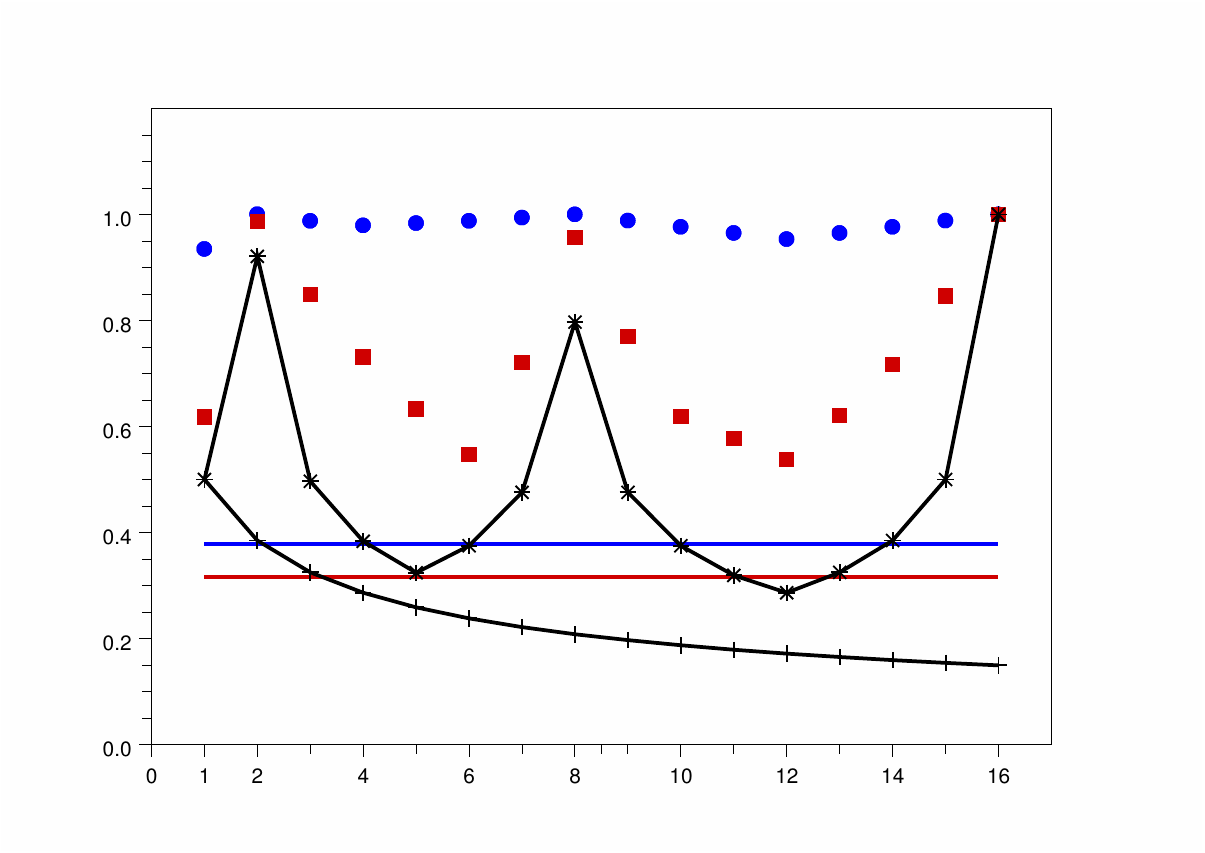}
\caption{Plot of \col{ratio} as a function of $k$ for the scalar polynomials of~\eqref{eq:polyexp}
(blue circles) and~\eqref{eq:polyexp2} (red squares),
together with the lower bound \col{fewnom} (blue and red lines resp.),
\col{p\'olya} (plus signs) and \col{separ} (stars signs).}
\label{fig_exemplescalar}
\end{figure}
\end{example}

\subsection{Illustration of lower bounds for the eigenvalues of matrix polynomials}
\label{sec:numexp}
Note that to compute the eigenvalues of the following examples 
we had to use the tropical scaling algorithm that we recall in the next section.

In the following examples, we are comparing the value of the ratio 
$\frac{|\zeta_1\dots \zeta_{nk}|}{(\alpha_1\dots\alpha_k)^n}$,
$k=1,\ldots, d$,
with the lower bounds~\eqref{i_edge_bound} 
or~\eqref{i_edge_bound_n} of Section~\ref{sec:Main_results} 
for some matrix polynomials $P$.
In view of Remark~\ref{comparenorms}, we computed only
the lower bounds obtained under~\ref{assump2} for
the normalized Frobenius norm $\|\cdot\|_\nf$,
and the lower bounds obtained under~\ref{assump1} for 
the normalized Schatten $1$-norm (or normalized trace norm) 
$\|\cdot\|_{*1}$ (which satisfies~\ref{assump1} but not \ref{assump2}, 
see Property~\ref{prop-schatten}).

Then, using the notations of Section~\ref{sec:Main_results},
we shall show in tables or plots the following quantities with respect
to $k=1,\ldots, d$ for comparison:
- the ratio $\frac{|\zeta_1\dots \zeta_{nk}|}{(\alpha_1\dots\alpha_k)^n}$,
denoted \col{ratio};
- the universal lower bound
generalizing the lower bound of P\'olya to the matrix case, 
given by $c L_k^n$ (see~\eqref{i_edge_bound_n})
with $L_k$  of~\eqref{gen_polya} or~\eqref{gen_polya_fro},
denoted \col{g-p\'olya};
- the lower bound involving the number of monomials, given by
$c L_k^n$ with  $L_k$ of~\eqref{nnz_bound} or~\eqref{nnz_bound_fro},
denoted \col{fewnom};
- the lower bound based on the 
separation between the tropical roots, given by
$c (L_k^{\text{prox}})^n$ or $c (L_k^{*,\text{prox}})^n$ with 
$L_k^{\text{prox}}$ and $L_k^{*,\text{prox}}$ of~\eqref{L_k_j_prox}
and~\eqref{L_k_j_fro_prox} respectively,
denoted \col{separ}.
Also, in tables, the absciss\ae\ of the vertices of the Newton polygon of 
$\trop p$ are indicated by the symbol~$^*$.

\begin{example}
\label{matx_exp_1} \rm 
Consider the following matrix polynomial
\[
P_1(\lambda)=10^{-7}U_0+10^2\lambda^2U_2+10^{7}\lambda^4U_4+10\lambda^7U_7+10^{-8}\lambda^9U_9\enspace,
\]
where all the matrices $U_j$, $j\in \{0,2,4,7,9\}$, are unitary of dimension $3$.  Here, we shall only consider the normalized Frobenius norm since
for a unitary matrix $U$, we have $\|U\|_\nf=\|U\|_{*1}=1$,
so the tropical polynomial associated with $P_1$  are the same for both norms,
and the lower bounds under~\ref{assump1} are the same, 
and are thus weaker than the lower bounds under~\ref{assump2}
for the normalized Frobenius norm.
The Newton polygon of the tropical polynomial corresponding to $P_1$
(for the normalized Frobenius norm) is shown in Figure~\ref{new_exp_mtx_1},
\begin{figure}[htbp]
\centering
\includegraphics[scale=0.4]{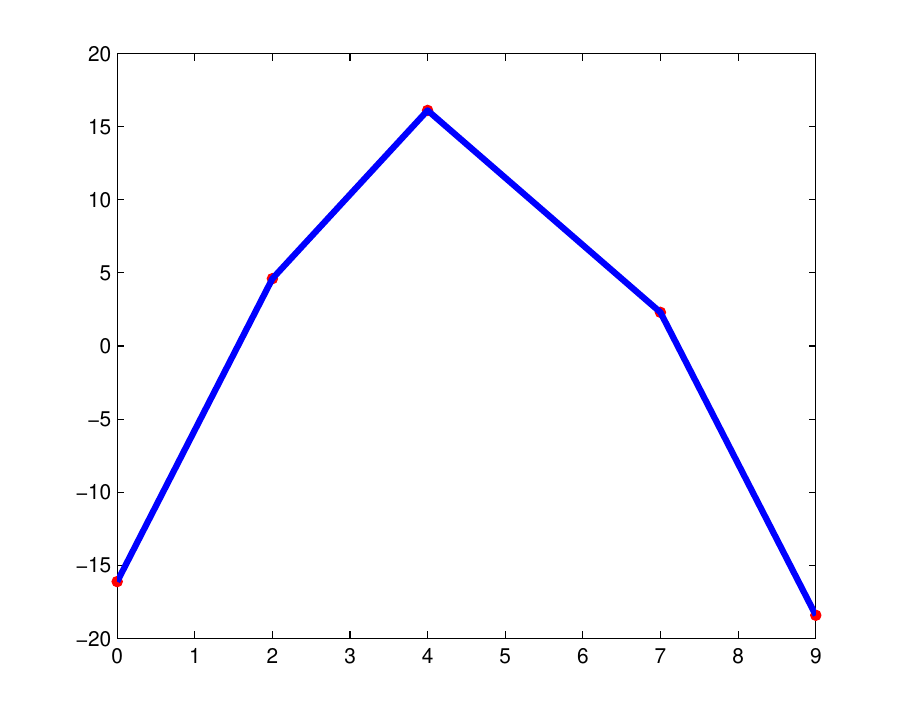}
\caption{The Newton polygon of the tropical polynomial corresponding to $P_1$.}
\label{new_exp_mtx_1}
\end{figure}
and its tropical roots are $10^{-9/2},\; 10^{-5/2},\; 10^2$, and $10^{9/2}$ with
multiplicities $2,\; 2,\; 3$, and $2$, respectively.
Figure~\ref{tbl_mtx1} shows the lower bounds for all values of $1\leq k \leq 9$,
and the maximum of the ratios for a sample of $1000$ random choices
of the unitary matrices $U_j$.
The ratios slightly change for different random choices of 
the unitary matrices $U_j$, but with a difference smaller than $6\, 10^{-4}$.
Note that when $1\leq k\leq 9$ increases, the generalized lower 
bound of P\'olya decreases,
and should be replaced by its symmetrized version with $k$ replaced by
$d-k$.
However the bounds using the separation between tropical roots 
are the best ones, and they are tight at the vertices of the Newton polygon
of the tropical polynomial associated with $P_1$.
 \begin{figure}[htbp]%
\begin{center}
\begin{minipage}[l]{0.51\textwidth}{\scriptsize
\hfill 
\begin{tabular}{|l|l||c|l|l|}
\hline
k & \col{ratio} & \col{fewnom} &  \col{g-p\'olya} & \col{separ} \\
\hline
1  &  1.00015  &  0.08944  &  0.125    &  0.125    \\  
2$^*$  &  1.00029  &  \myquote  &  0.05702  &  0.97044  \\  
3  &  1.00015  &  \myquote  &  0.03425  &  0.125    \\  
4$^*$  &  1        &  \myquote  &  0.02345  &  0.99991  \\  
5  &  1.00000  &  \myquote  &  0.01733  &  0.125    \\  
6  &  1.00000  &  \myquote  &  0.01348  &  0.12500  \\  
7$^*$  &  1        &  \myquote  &  0.01088  &  0.99056  \\  
8  &  1        &  \myquote  &  0.00901  &  0.125    \\  
9$^*$  &  1        &  \myquote  &  0.00763  &  1        \\  
\hline
\end{tabular}}
\end{minipage}
\begin{minipage}[r]{0.47\textwidth}
\hfill
\begin{picture}(0,0)%
\includegraphics{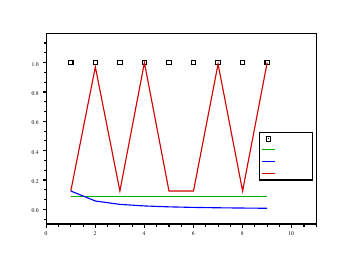}%
\end{picture}%
\setlength{\unitlength}{1184sp}%
\begingroup\makeatletter\ifx\SetFigFont\undefined%
\gdef\SetFigFont#1#2#3#4#5{%
  \reset@font\fontsize{#1}{#2pt}%
  \fontfamily{#3}\fontseries{#4}\fontshape{#5}%
  \selectfont}%
\fi\endgroup%
\begin{picture}(9645,6829)(-32,-5957)
\put(7383,-2920){\makebox(0,0)[lb]{\smash{{\SetFigFont{5}{6.0}{\rmdefault}{\mddefault}{\updefault}{\color[rgb]{0.000,0.000,0.000} \col{ratio}}%
}}}}
\put(7383,-3208){\makebox(0,0)[lb]{\smash{{\SetFigFont{5}{6.0}{\rmdefault}{\mddefault}{\updefault}{\color[rgb]{0.000,0.000,0.000} \col{fewnom}}%
}}}}
\put(7383,-3548){\makebox(0,0)[lb]{\smash{{\SetFigFont{5}{6.0}{\rmdefault}{\mddefault}{\updefault}{\color[rgb]{0.000,0.000,0.000} \col{g-p\'olya}}%
}}}}
\put(7383,-3836){\makebox(0,0)[lb]{\smash{{\SetFigFont{5}{6.0}{\rmdefault}{\mddefault}{\updefault}{\color[rgb]{0.000,0.000,0.000} \col{separ}}%
}}}}
\end{picture}%
\end{minipage}
\end{center}
\vspace{-1em}
\caption{Comparison of the bounds for $P_1$ using the normalized
Frobenius norm.}
\label{tbl_mtx1}
\end{figure}
\end{example}

\begin{example} \rm
\label{matx_exp_2}
In the following example, we perturb the previous polynomial by adding
coefficients to  $P_1$, in such a way that the Newton polygon 
of the tropical polynomial associated with $P_1$ does not change,
either for the normalized Frobenius norm or at least for
the normalized Schatten $1$-norm.
Here we fix the unitary matrices to be either the identity matrix or
its opposite, and we consider the following matrices:
\[
B=b \begin{pmatrix} 10^{-15} & 1 & 1\\ 0 & 1 & 1\\ 1 & 2 & 3 \end{pmatrix},
\quad C=\frac{\sqrt{3}}{14}\begin{pmatrix} 1 & 2 & 3\\ 2 & 4 & 6\\ 3 & 6 & 9 \end{pmatrix}\enspace
\]
where $b$ is chosen so that $\|B\|_\nf=1$, and $C$ is also such that
$\|C\|_\nf$=1.
The matrix $C$ is singular, since it has rank $1$,
and it satisfies $\|C\|_{*1}=1/\sqrt{3}$.
The matrix $B$ is ill-conditioned, 
so that $\|B\|_{*1}<\|B\|_{\nf}=1$ too.

We shall consider the two polynomials:
\begin{align*}
P_2(\lambda)=&10^{-7}U_0+10^{-5/2}\lambda C_1+10^2\lambda^2U_2+10^{9/2}\lambda^3C_1+10^{7}\lambda^4U_4+10^5\lambda^5 C_1\\
&+10^3\lambda^6B_1+10\lambda^7U_7+10^{-7/2}\lambda^8 B_1+10^{-8}\lambda^9U_9\\
P_3(\lambda)=&10^{-7}U_0+10^{-5/2}\lambda C_2+10^2\lambda^2U_2+10^{9/2}\lambda^3C_2+10^{7}\lambda^4U_4+10^5\lambda^5 C_2\\
&+10^3\lambda^6B_2+10\lambda^7U_7+10^{-7/2}\lambda^8 B_2+10^{-8}\lambda^9U_9\enspace,
\end{align*}
with $U_0=U_4=U_7=-I$, $U_2=U_9=I$, $B_1=B$, $C_1=C$,
$B_2=\|B\|_{*1}^{-1} B$, $C_2=\sqrt{3} C$.
Since for any unitary matrix $U$ with dimension $3$,
 $\|U\|_{*1}=\|U\|_\nf=1$, 
and since for any complex matrix $A$ of dimension $3$,
we have $\|A\|_{*1}\leq \|A\|_\nf$,
the tropical polynomial associated with $P_2$ for either the normalized Frobenius
norm or the Schatten $1$-norm coincides with the one associated with $P_1$.
Hence, as for $P_1$, the bounds for $P_2$ with the normalized
Schatten $1$-norm are necessarily weaker than the ones 
with the normalized Frobenius norm.
Moreover, all the bounds presented in 
Table~\ref{tbl_mtx1} for $P_1$ remain the same for $P_2$ 
except the bounds based on the number of nonzero coefficients.
We present all these bounds in Figure~\ref{tbl_mtx5}
together with the new ratios. Since these ratios are different from
the ones of Table~\ref{tbl_mtx1}, one can see that the lower bounds
based on the separation between tropical roots may be tight.

\begin{figure}[htbp] %
\begin{center}
\begin{minipage}[c]{0.51\textwidth}{\scriptsize
\hfill 
\begin{tabular}{|l|l||c|l|l|}
\hline
k & \col{ratio} & \col{fewnom} &  \col{g-p\'olya} & \col{separ} \\
\hline
1  &  0.45628  &  0.03162  &  0.125    &  0.125    \\  
2$^*$  &  1.03298  &  \myquote  &  0.05702  &  0.97044  \\  
3  &  0.45627  &  \myquote  &  0.03425  &  0.125    \\  
4$^*$  &  1.00009  &  \myquote  &  0.02345  &  0.99991  \\  
5  &  0.39258  &  \myquote  &  0.01733  &  0.125    \\  
6  &  0.39258  &  \myquote  &  0.01348  &  0.12500  \\  
7$^*$  &  1.00867  &  \myquote  &  0.01088  &  0.99056  \\  
8  &  0.47428  &  \myquote  &  0.00901  &  0.125    \\  
9$^*$  &  1        &  \myquote  &  0.00763  &  1        \\  
\hline
\end{tabular}}
\end{minipage}
\begin{minipage}[c]{0.47\textwidth}
\hfill
\begin{picture}(0,0)%
\includegraphics{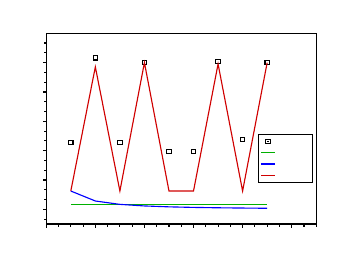}%
\end{picture}%
\setlength{\unitlength}{1184sp}%
\begingroup\makeatletter\ifx\SetFigFont\undefined%
\gdef\SetFigFont#1#2#3#4#5{%
  \reset@font\fontsize{#1}{#2pt}%
  \fontfamily{#3}\fontseries{#4}\fontshape{#5}%
  \selectfont}%
\fi\endgroup%
\begin{picture}(9645,6829)(-32,-5957)
\put(1156,-5410){\makebox(0,0)[lb]{\smash{{\SetFigFont{5}{6.0}{\sfdefault}{\mddefault}{\updefault}{\color[rgb]{0.000,0.000,0.000}0}%
}}}}
\put(2465,-5410){\makebox(0,0)[lb]{\smash{{\SetFigFont{5}{6.0}{\sfdefault}{\mddefault}{\updefault}{\color[rgb]{0.000,0.000,0.000}2}%
}}}}
\put(3774,-5410){\makebox(0,0)[lb]{\smash{{\SetFigFont{5}{6.0}{\sfdefault}{\mddefault}{\updefault}{\color[rgb]{0.000,0.000,0.000}4}%
}}}}
\put(5083,-5410){\makebox(0,0)[lb]{\smash{{\SetFigFont{5}{6.0}{\sfdefault}{\mddefault}{\updefault}{\color[rgb]{0.000,0.000,0.000}6}%
}}}}
\put(6392,-5410){\makebox(0,0)[lb]{\smash{{\SetFigFont{5}{6.0}{\sfdefault}{\mddefault}{\updefault}{\color[rgb]{0.000,0.000,0.000}8}%
}}}}
\put(7655,-5410){\makebox(0,0)[lb]{\smash{{\SetFigFont{5}{6.0}{\sfdefault}{\mddefault}{\updefault}{\color[rgb]{0.000,0.000,0.000}10}%
}}}}
\put(811,-4802){\makebox(0,0)[lb]{\smash{{\SetFigFont{5}{6.0}{\sfdefault}{\mddefault}{\updefault}{\color[rgb]{0.000,0.000,0.000}0.0}%
}}}}
\put(811,-4019){\makebox(0,0)[lb]{\smash{{\SetFigFont{5}{6.0}{\sfdefault}{\mddefault}{\updefault}{\color[rgb]{0.000,0.000,0.000}0.2}%
}}}}
\put(811,-3236){\makebox(0,0)[lb]{\smash{{\SetFigFont{5}{6.0}{\sfdefault}{\mddefault}{\updefault}{\color[rgb]{0.000,0.000,0.000}0.4}%
}}}}
\put(811,-2453){\makebox(0,0)[lb]{\smash{{\SetFigFont{5}{6.0}{\sfdefault}{\mddefault}{\updefault}{\color[rgb]{0.000,0.000,0.000}0.6}%
}}}}
\put(811,-1671){\makebox(0,0)[lb]{\smash{{\SetFigFont{5}{6.0}{\sfdefault}{\mddefault}{\updefault}{\color[rgb]{0.000,0.000,0.000}0.8}%
}}}}
\put(811,-888){\makebox(0,0)[lb]{\smash{{\SetFigFont{5}{6.0}{\sfdefault}{\mddefault}{\updefault}{\color[rgb]{0.000,0.000,0.000}1.0}%
}}}}
\put(7367,-2988){\makebox(0,0)[lb]{\smash{{\SetFigFont{5}{6.0}{\sfdefault}{\mddefault}{\updefault}{\color[rgb]{0.000,0.000,0.000} \col{ratio}}%
}}}}
\put(7367,-3276){\makebox(0,0)[lb]{\smash{{\SetFigFont{5}{6.0}{\sfdefault}{\mddefault}{\updefault}{\color[rgb]{0.000,0.000,0.000} \col{fewnom}}%
}}}}
\put(7367,-3616){\makebox(0,0)[lb]{\smash{{\SetFigFont{5}{6.0}{\sfdefault}{\mddefault}{\updefault}{\color[rgb]{0.000,0.000,0.000} \col{g-p\'olya}}%
}}}}
\put(7367,-3904){\makebox(0,0)[lb]{\smash{{\SetFigFont{5}{6.0}{\sfdefault}{\mddefault}{\updefault}{\color[rgb]{0.000,0.000,0.000} \col{separ}}%
}}}}
\end{picture}%
\end{minipage}
\end{center}
\vspace{-1em}
\caption{Comparison of the bounds for $P_2$ using the normalized
Frobenius norm.}
\label{tbl_mtx5}
\end{figure}

Let us consider now the polynomial $P_3$.
The normalized Schatten $1$-norm of the coefficients of $P_3$ coincide
with the normalized Frobenius norm of the coefficients of $P_2$, hence
the tropical polynomial associated with $P_3$ for the normalized
Schatten $1$-norm coincides with the one associated with $P_1$
for the normalized Frobenius norm, so that the tropical roots are
the same.
However, since the Schatten $1$-norm satisfies~\ref{assump1}
but not~\ref{assump2}, we can only get the bounds based on~\ref{assump1}
which are necessarily lower than the one presented above for $P_1$
with the normalized Frobenius norm.
Finally, the tropical polynomial associated
with $P_3$ for the normalized Frobenius norm differs from the one
associated with $P_1$, so that the tropical roots and the bounds differ too.
All tropical roots of this new tropical polynomial
have multiplicity $1$, which means that
all indices are absciss\ae\ of vertices of its Newton polygon.
We present in Figure~\ref{tbl_mtx6}, the ratios and the lower bounds for 
both normalized Frobenius and  Schatten $1$-norms.
Here the ratios and lower bounds for the normalized Schatten $1$-norm
 are lower than the ones for the normalized Frobenius norm.
However, one can see that the results for the  normalized Frobenius norm
are still better than the ones for the normalized Schatten $1$-norm.

\begin{figure}[htbp] %
\centering
{\scriptsize
\begin{tabular}{|l||l||c|l|l||l||c|l|l|}
\hline
&\multicolumn{4}{|c||}{norm.\ Frobenius norm} &\multicolumn{4}{|c|}{norm.\ Schatten $1$-norm}\\
\hline
k & \col{ratio} & \col{fewnom} &  \col{g-p\'olya} & \col{separ} 
& \col{ratio} & \col{fewnom} &  \col{g-p\'olya} & \col{separ} \\
\hline
1  &  1.57223  &  0.03162  &  0.125    &  0.35355  &  0.30258  &  0.001  &  0.01563  &  0.01563  \\  
2$^*$  &  1.11466  &  \myquote  &  0.05702  &  0.91391  &  1.11466  &  \myquote  &  0.00325  &  0.54771  \\  
3  &  1.57223  &  \myquote  &  0.03425  &  0.35355  &  0.30258  &  \myquote  &  0.00117  &  0.01562  \\  
4$^*$   &  1.00028  &  \myquote  &  0.02345  &  0.99972  &  1.00028  &  \myquote  &  0.00055  &  0.96682  \\  
5  &  1.45041  &  \myquote  &  0.01733  &  0.18345  &  0.27913  &  \myquote  &  0.00030  &  0.01562  \\  
6  &  1.01082  &  \myquote  &  0.01348  &  0.05989  &  0.27913  &  \myquote  &  0.00018  &  0.01533  \\  
7$^*$   &  1.02104  &  \myquote  &  0.01088  &  0.97788  &  1.02104  &  \myquote  &  0.00012  &  0.71337  \\  
8  &  1.26686  &  \myquote  &  0.00901  &  0.25721  &  0.34983  &  \myquote  &  0.00008  &  0.01563  \\  
9$^*$   &  1        &  \myquote  &  0.00763  &  1        &  1        &  \myquote  &  0.00006  &  1        \\  
\hline
\end{tabular}}

\begin{picture}(0,0)%
\includegraphics{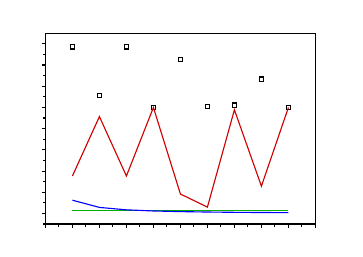}%
\end{picture}%
\setlength{\unitlength}{1184sp}%
\begingroup\makeatletter\ifx\SetFigFont\undefined%
\gdef\SetFigFont#1#2#3#4#5{%
  \reset@font\fontsize{#1}{#2pt}%
  \fontfamily{#3}\fontseries{#4}\fontshape{#5}%
  \selectfont}%
\fi\endgroup%
\begin{picture}(9624,6808)(-11,-5957)
\put(1156,-5410){\makebox(0,0)[lb]{\smash{{\SetFigFont{5}{6.0}{\sfdefault}{\mddefault}{\updefault}{\color[rgb]{0.000,0.000,0.000}0}%
}}}}
\put(1876,-5410){\makebox(0,0)[lb]{\smash{{\SetFigFont{5}{6.0}{\sfdefault}{\mddefault}{\updefault}{\color[rgb]{0.000,0.000,0.000}1}%
}}}}
\put(2596,-5410){\makebox(0,0)[lb]{\smash{{\SetFigFont{5}{6.0}{\sfdefault}{\mddefault}{\updefault}{\color[rgb]{0.000,0.000,0.000}2}%
}}}}
\put(3316,-5410){\makebox(0,0)[lb]{\smash{{\SetFigFont{5}{6.0}{\sfdefault}{\mddefault}{\updefault}{\color[rgb]{0.000,0.000,0.000}3}%
}}}}
\put(4036,-5410){\makebox(0,0)[lb]{\smash{{\SetFigFont{5}{6.0}{\sfdefault}{\mddefault}{\updefault}{\color[rgb]{0.000,0.000,0.000}4}%
}}}}
\put(4756,-5410){\makebox(0,0)[lb]{\smash{{\SetFigFont{5}{6.0}{\sfdefault}{\mddefault}{\updefault}{\color[rgb]{0.000,0.000,0.000}5}%
}}}}
\put(5476,-5410){\makebox(0,0)[lb]{\smash{{\SetFigFont{5}{6.0}{\sfdefault}{\mddefault}{\updefault}{\color[rgb]{0.000,0.000,0.000}6}%
}}}}
\put(6196,-5410){\makebox(0,0)[lb]{\smash{{\SetFigFont{5}{6.0}{\sfdefault}{\mddefault}{\updefault}{\color[rgb]{0.000,0.000,0.000}7}%
}}}}
\put(6916,-5410){\makebox(0,0)[lb]{\smash{{\SetFigFont{5}{6.0}{\sfdefault}{\mddefault}{\updefault}{\color[rgb]{0.000,0.000,0.000}8}%
}}}}
\put(7636,-5410){\makebox(0,0)[lb]{\smash{{\SetFigFont{5}{6.0}{\sfdefault}{\mddefault}{\updefault}{\color[rgb]{0.000,0.000,0.000}9}%
}}}}
\put(8310,-5410){\makebox(0,0)[lb]{\smash{{\SetFigFont{5}{6.0}{\sfdefault}{\mddefault}{\updefault}{\color[rgb]{0.000,0.000,0.000}10}%
}}}}
\put(811,-4910){\makebox(0,0)[lb]{\smash{{\SetFigFont{5}{6.0}{\sfdefault}{\mddefault}{\updefault}{\color[rgb]{0.000,0.000,0.000}0.0}%
}}}}
\put(811,-4345){\makebox(0,0)[lb]{\smash{{\SetFigFont{5}{6.0}{\sfdefault}{\mddefault}{\updefault}{\color[rgb]{0.000,0.000,0.000}0.2}%
}}}}
\put(811,-3780){\makebox(0,0)[lb]{\smash{{\SetFigFont{5}{6.0}{\sfdefault}{\mddefault}{\updefault}{\color[rgb]{0.000,0.000,0.000}0.4}%
}}}}
\put(811,-3214){\makebox(0,0)[lb]{\smash{{\SetFigFont{5}{6.0}{\sfdefault}{\mddefault}{\updefault}{\color[rgb]{0.000,0.000,0.000}0.6}%
}}}}
\put(811,-2649){\makebox(0,0)[lb]{\smash{{\SetFigFont{5}{6.0}{\sfdefault}{\mddefault}{\updefault}{\color[rgb]{0.000,0.000,0.000}0.8}%
}}}}
\put(811,-2084){\makebox(0,0)[lb]{\smash{{\SetFigFont{5}{6.0}{\sfdefault}{\mddefault}{\updefault}{\color[rgb]{0.000,0.000,0.000}1.0}%
}}}}
\put(811,-1518){\makebox(0,0)[lb]{\smash{{\SetFigFont{5}{6.0}{\sfdefault}{\mddefault}{\updefault}{\color[rgb]{0.000,0.000,0.000}1.2}%
}}}}
\put(811,-953){\makebox(0,0)[lb]{\smash{{\SetFigFont{5}{6.0}{\sfdefault}{\mddefault}{\updefault}{\color[rgb]{0.000,0.000,0.000}1.4}%
}}}}
\put(811,-388){\makebox(0,0)[lb]{\smash{{\SetFigFont{5}{6.0}{\sfdefault}{\mddefault}{\updefault}{\color[rgb]{0.000,0.000,0.000}1.6}%
}}}}
\end{picture}%
\hspace{-1em}
\begin{picture}(0,0)%
\includegraphics{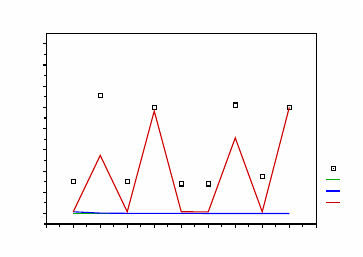}%
\end{picture}%
\setlength{\unitlength}{1184sp}%
\begingroup\makeatletter\ifx\SetFigFont\undefined%
\gdef\SetFigFont#1#2#3#4#5{%
  \reset@font\fontsize{#1}{#2pt}%
  \fontfamily{#3}\fontseries{#4}\fontshape{#5}%
  \selectfont}%
\fi\endgroup%
\begin{picture}(9645,6829)(-32,-5957)
\put(1156,-5410){\makebox(0,0)[lb]{\smash{{\SetFigFont{5}{6.0}{\sfdefault}{\mddefault}{\updefault}{\color[rgb]{0.000,0.000,0.000}0}%
}}}}
\put(1876,-5410){\makebox(0,0)[lb]{\smash{{\SetFigFont{5}{6.0}{\sfdefault}{\mddefault}{\updefault}{\color[rgb]{0.000,0.000,0.000}1}%
}}}}
\put(2596,-5410){\makebox(0,0)[lb]{\smash{{\SetFigFont{5}{6.0}{\sfdefault}{\mddefault}{\updefault}{\color[rgb]{0.000,0.000,0.000}2}%
}}}}
\put(3316,-5410){\makebox(0,0)[lb]{\smash{{\SetFigFont{5}{6.0}{\sfdefault}{\mddefault}{\updefault}{\color[rgb]{0.000,0.000,0.000}3}%
}}}}
\put(4036,-5410){\makebox(0,0)[lb]{\smash{{\SetFigFont{5}{6.0}{\sfdefault}{\mddefault}{\updefault}{\color[rgb]{0.000,0.000,0.000}4}%
}}}}
\put(4756,-5410){\makebox(0,0)[lb]{\smash{{\SetFigFont{5}{6.0}{\sfdefault}{\mddefault}{\updefault}{\color[rgb]{0.000,0.000,0.000}5}%
}}}}
\put(5476,-5410){\makebox(0,0)[lb]{\smash{{\SetFigFont{5}{6.0}{\sfdefault}{\mddefault}{\updefault}{\color[rgb]{0.000,0.000,0.000}6}%
}}}}
\put(6196,-5410){\makebox(0,0)[lb]{\smash{{\SetFigFont{5}{6.0}{\sfdefault}{\mddefault}{\updefault}{\color[rgb]{0.000,0.000,0.000}7}%
}}}}
\put(6916,-5410){\makebox(0,0)[lb]{\smash{{\SetFigFont{5}{6.0}{\sfdefault}{\mddefault}{\updefault}{\color[rgb]{0.000,0.000,0.000}8}%
}}}}
\put(7636,-5410){\makebox(0,0)[lb]{\smash{{\SetFigFont{5}{6.0}{\sfdefault}{\mddefault}{\updefault}{\color[rgb]{0.000,0.000,0.000}9}%
}}}}
\put(8310,-5410){\makebox(0,0)[lb]{\smash{{\SetFigFont{5}{6.0}{\sfdefault}{\mddefault}{\updefault}{\color[rgb]{0.000,0.000,0.000}10}%
}}}}
\put(811,-4910){\makebox(0,0)[lb]{\smash{{\SetFigFont{5}{6.0}{\sfdefault}{\mddefault}{\updefault}{\color[rgb]{0.000,0.000,0.000}0.0}%
}}}}
\put(811,-4345){\makebox(0,0)[lb]{\smash{{\SetFigFont{5}{6.0}{\sfdefault}{\mddefault}{\updefault}{\color[rgb]{0.000,0.000,0.000}0.2}%
}}}}
\put(811,-3780){\makebox(0,0)[lb]{\smash{{\SetFigFont{5}{6.0}{\sfdefault}{\mddefault}{\updefault}{\color[rgb]{0.000,0.000,0.000}0.4}%
}}}}
\put(811,-3214){\makebox(0,0)[lb]{\smash{{\SetFigFont{5}{6.0}{\sfdefault}{\mddefault}{\updefault}{\color[rgb]{0.000,0.000,0.000}0.6}%
}}}}
\put(811,-2649){\makebox(0,0)[lb]{\smash{{\SetFigFont{5}{6.0}{\sfdefault}{\mddefault}{\updefault}{\color[rgb]{0.000,0.000,0.000}0.8}%
}}}}
\put(811,-2084){\makebox(0,0)[lb]{\smash{{\SetFigFont{5}{6.0}{\sfdefault}{\mddefault}{\updefault}{\color[rgb]{0.000,0.000,0.000}1.0}%
}}}}
\put(811,-1518){\makebox(0,0)[lb]{\smash{{\SetFigFont{5}{6.0}{\sfdefault}{\mddefault}{\updefault}{\color[rgb]{0.000,0.000,0.000}1.2}%
}}}}
\put(811,-953){\makebox(0,0)[lb]{\smash{{\SetFigFont{5}{6.0}{\sfdefault}{\mddefault}{\updefault}{\color[rgb]{0.000,0.000,0.000}1.4}%
}}}}
\put(811,-388){\makebox(0,0)[lb]{\smash{{\SetFigFont{5}{6.0}{\sfdefault}{\mddefault}{\updefault}{\color[rgb]{0.000,0.000,0.000}1.6}%
}}}}
\put(9106,-3706){\makebox(0,0)[lb]{\smash{{\SetFigFont{5}{6.0}{\sfdefault}{\mddefault}{\updefault}{\color[rgb]{0.000,0.000,0.000} \col{ratio}}%
}}}}
\put(9106,-3994){\makebox(0,0)[lb]{\smash{{\SetFigFont{5}{6.0}{\sfdefault}{\mddefault}{\updefault}{\color[rgb]{0.000,0.000,0.000} \col{fewnom}}%
}}}}
\put(9106,-4334){\makebox(0,0)[lb]{\smash{{\SetFigFont{5}{6.0}{\sfdefault}{\mddefault}{\updefault}{\color[rgb]{0.000,0.000,0.000} \col{g-p\'olya}}%
}}}}
\put(9106,-4622){\makebox(0,0)[lb]{\smash{{\SetFigFont{5}{6.0}{\sfdefault}{\mddefault}{\updefault}{\color[rgb]{0.000,0.000,0.000} \col{separ}}%
}}}}
\end{picture}%
\caption{Comparison of the bounds for $P_3$ using the normalized
Frobenius (on left) and Schatten $1$-norms (on right).
The symbol~$^*$ corresponds to 
the absciss\ae\ of the vertices of the Newton polygon of the tropical polynomial
associated with the normalized Frobenius norm.}
\label{tbl_mtx6}
\end{figure}

\end{example}

\begin{example}
\label{matx_exp_3}
\rm
In order to obtain a polynomial for which the lower bounds for the normalized 
Schatten $1$-norm are better than the ones 
for the normalized Frobenius norm, one need to take matrices $A_i$ such that
$\|A_i\|_\nf/\|A_i\|_{*1}$ is as large as possible. This means that
the dimension $n$ is large and that some of the singular values of $A_i$
are zero, or at least that the ratio between
the maximal and minimal singular values is large, which implies that $A_i$ is 
singular or at least has a large condition number.
Moreover some parameters of the lower bounds need to be 
smaller than $\|A_i\|_\nf/\|A_i\|_{*1}$, which is possible for instance for
the lower bounds based on the separation between tropical roots, 
when these tropical roots have a small multiplicity and 
are well separated.
We shall show here such examples.

We consider a matrix polynomial with degree $3$ and dimension
$n=10$:
\[
P_4(\lambda)=A_0+A_1\lambda+A_2\lambda^2+A_3 \lambda^3 \enspace,
\]
where all the matrices $A_i,i=0,\ldots,3$ are diagonal matrices:
$A_i=\diag(d_i)$ with $d_i\in\C^n$.
We choose $A_0=A_3=I$, 
so that $c=1$, $\alpha_1\alpha_2\alpha_3=1$
(for norms such that $\|I\|=1$) and  for $k=3$ the ratio
$\frac{|\zeta_1\dots \zeta_{nk}|}{(\alpha_1\dots\alpha_k)^n}$
is equal to $1$, which is equal to the lower bound based on separation
of tropical roots.
We choose the diagonal elements of the matrices $A_1,A_2$ as follows:
\[
d_1:= \frac{a_1}{1+b/n} (1, \ldots, 1, 1+b)\enspace,\quad 
d_2:= \frac{a_2}{1+b/n} (1+b, 1, \ldots, 1)\enspace,
\]
with $a_1,a_2,b> 0$.
Then 
\begin{gather*} \|A_1\|_{*1}= \frac{\|d_1\|_1}{n}=a_1 \enspace,\quad
\|A_2\|_{*1}= \frac{\|d_2\|_1}{n}=a_2 \enspace,\\
\|A_1\|_\nf=\frac{\|d_1\|_2}{\sqrt{n}}=a_1 f_b\enspace,\quad
\|A_2\|_\nf=\frac{\|d_2\|_2}{\sqrt{n}}=a_2 f_b \enspace,
\end{gather*}
with
\[ f_b:=\frac{\sqrt{1+\frac{2b+b^2}{n}}}{1+b/n}>1\enspace.\]
The constant $f_b$ is large when $b$ is large, that is when the 
condition numbers of $A_1$ and $A_2$ are large.

When $a_1=a_2>1$, the tropical polynomial associated with $P_4$
for the normalized Schatten $1$-norm (resp.\ the normalized Frobenius norm)
has $3$ different tropical roots equal to $1/a_1$, $1$ and $a_1$
(resp.\ $1/(a_1 f_b)$, $1$ and $a_1 f_b$).
We present in Tables~\ref{tbl_mtx7} and~\ref{tbl_mtx8}, 
the ratios and the lower bounds for 
both normalized Frobenius and  Schatten $1$-norms,
when $a_1=10^4$, and  $b=2$ and $b=10$ respectively.
When $b=10$, the matrices have a large condition number, so that the
ratios $\frac{|\zeta_1\dots \zeta_{nk}|}{(\alpha_1\dots\alpha_k)^n}$
for $k=1$ or $2$ are very large. When either $b=2$ or $b=10$,
we see that the lower bounds based on the separation between the tropical
roots are the best ones, and that they are nearer the ratios
($\frac{|\zeta_1\dots \zeta_{nk}|}{(\alpha_1\dots\alpha_k)^n}$) in the
case of the Schatten $1$-norm than in the case of the Frobenius norm,
although they are far in the case where $b=10$.

\begin{table}[htp]
\centering
{\scriptsize
\begin{tabular}{|l||l||c|l|l||l||c|l|l|}
\hline
&\multicolumn{4}{|c||}{norm.\ Frobenius norm} &\multicolumn{4}{|c|}{norm.\ Schatten $1$-norm}\\
\hline
k & \col{ratio} & \col{fewnom} &  \col{g-p\'olya} & \col{separ} 
& \col{ratio} & \col{fewnom} &  \col{g-p\'olya} & \col{separ} \\
\hline
1$^*$  &  6.30697  &  0.00098  &  0.00098  &  0.99911  &  2.06667  &  9.5D-07  &  9.5D-07  &  0.81873  \\  
2$^*$  &  6.30697  &  0.00098  &  0.00007  &  0.99911  &  2.06667  &  9.5D-07  &  5.1D-09  &  0.81873  \\  
3$^*$  &  1        &  0.00098  &  0.00001  &  1        &  1        &  9.5D-07  &  1.7D-10  &  1        \\  
\hline
\end{tabular}
\caption{Comparison of the bounds for $P_4$ with $a_1=a_2=10^4$ and $b=2$.}
\label{tbl_mtx7}
}
\end{table}
\begin{table}[htp]
\centering
{\scriptsize
\begin{tabular}{|l||l||c|l|l||l||c|l|l|}
\hline
&\multicolumn{4}{|c||}{norm.\ Frobenius norm} &\multicolumn{4}{|c|}{norm.\ Schatten $1$-norm}\\
\hline
k & \col{ratio} & \col{fewnom} &  \col{g-p\'olya} & \col{separ} 
& \col{ratio} & \col{fewnom} &  \col{g-p\'olya} & \col{separ} \\
\hline
1$^*$  &  33882.7  &  0.00098  &  0.00098  &  0.99945  &  93.4462  &  9.5D-07  &  9.5D-07  &  0.81873  \\  
2$^*$  &  33882.7  &  0.00098  &  0.00007  &  0.99945  &  93.4462  &  9.5D-07  &  5.1D-09  &  0.81873  \\  
3$^*$  &  1        &  0.00098  &  0.00001  &  1        &  1        &  9.5D-07  &  1.7D-10  &  1        \\  
\hline
\end{tabular}
\caption{Comparison of the bounds for $P_4$ with $a_1=a_2=10^4$ and $b=10$.}
\label{tbl_mtx8}
}
\end{table}

\begin{table}[htp]
\centering
{\scriptsize
\begin{tabular}{|l||l||c|l|l||l||c|l|l|}
\hline
&\multicolumn{4}{|c||}{norm.\ Frobenius norm} &\multicolumn{4}{|c|}{norm.\ Schatten $1$-norm}\\
\hline
k & \col{ratio} & \col{fewnom} &  \col{g-p\'olya} & \col{separ} 
& \col{ratio} & \col{fewnom} &  \col{g-p\'olya} & \col{separ} \\
\hline
 1$^*$  &  1.1D+21  &  0.00098  &  0.00098  &  0.99968  &  1.2D+16  &  9.5D-07  &  9.5D-07  &  0.81873  \\
2$^*$  &  1.1D+21  &  0.00098  &  0.00007  &  0.99968  &  1.2D+16  &  9.5D-07  &  5.1D-09  &  0.81873  \\
3$^*$  &  1        &  0.00098  &  0.00001  &  1        &  1        &  9.5D-07  &  1.7D-10  &  1        \\ 
\hline
\end{tabular}
\caption{Comparison of the bounds for $P_4$ with $a_1=a_2=10^4$ and $b=10^3$.}
\label{tbl_p4_1000}
}
\end{table}

When $a_1>1$ and $a_2=\sqrt{a_1/f_b}$ the tropical polynomial associated with $P_4$
for the normalized Schatten $1$-norm (resp.\ the normalized Frobenius norm)
has only $2$ different tropical roots equal to $1/a_1$ and
$\sqrt{a_1}$ (resp.\ $1/(a_1 f_b)$, $1$ and $\sqrt{a_1 f_b}$)
with respective multiplicities $1$ and $2$.
We present in Tables~\ref{tbl_mtx9} and~\ref{tbl_mtx10}, 
the ratios and the lower bounds for 
both normalized Frobenius and  Schatten $1$-norms,
when $a_1=10^4$, and  $b=2$ and $b=10$ respectively.
When $b=10$, the matrices have a large condition number, so that for $k=1$ or $2$ the
distances (ratios) between the ratio
$\frac{|\zeta_1\dots \zeta_{nk}|}{(\alpha_1\dots\alpha_k)^n}$ 
and its lower bounds are very large, although for $k=2$ this ratio
is smaller than in the case where $a_1=a_2=10^4$ above.
When either $b=2$ or $b=10$,
we see that the lower bounds based on the separation between the tropical
roots are the best ones.
For $k=2$ which is the abscissa of a vertex of the Newton polygon,
the lower bound obtained in the
case of the Schatten $1$-norm is nearer the ratio
$\frac{|\zeta_1\dots \zeta_{nk}|}{(\alpha_1\dots\alpha_k)^n}$ 
than the one obtained in the case of the Frobenius norm,
whereas for $k=3$ the contrary holds.

\begin{table}[htp]
\centering
{\scriptsize
\begin{tabular}{|l||l||c|l|l||l||c|l|l|}
\hline
&\multicolumn{4}{|c||}{norm.\ Frobenius norm} &\multicolumn{4}{|c|}{norm.\ Schatten $1$-norm}\\
\hline
k & \col{ratio} & \col{fewnom} &  \col{g-p\'olya} & \col{separ} 
& \col{ratio} & \col{fewnom} &  \col{g-p\'olya} & \col{separ} \\
\hline
1$^*$  &  6.29864  &  0.00098  &  0.00098  &  0.99999  &  2.06394  &  9.5D-07  &  9.5D-07  &  0.98020  \\  
2  &  0.68418  &  0.00098  &  0.00007  &  0.00098  &  0.39165  &  9.5D-07  &  5.1D-09  &  9.5D-07  \\  
3$^*$  &  1        &  0.00098  &  0.00001  &  1        &  1        &  9.5D-07  &  1.7D-10  &  1        \\  
\hline
\end{tabular}
\caption{Comparison of the bounds for $P_4$ with $a_1=10^4$,
$a_2=\sqrt{a_1/f_b}$, and $b=2$.}
\label{tbl_mtx9}
}
\end{table}
\begin{table}[htp]
\centering
{\scriptsize
\begin{tabular}{|l||l||c|l|l||l||c|l|l|}
\hline
&\multicolumn{4}{|c||}{norm.\ Frobenius norm} &\multicolumn{4}{|c|}{norm.\ Schatten $1$-norm}\\
\hline
k & \col{ratio} & \col{fewnom} &  \col{g-p\'olya} & \col{separ} 
& \col{ratio} & \col{fewnom} &  \col{g-p\'olya} & \col{separ} \\
\hline
1$^*$  &  33754.9  &  0.00098  &  0.00098  &  1.00000  &  93.0935  &  9.5D-07  &  9.5D-07  &  0.98020  \\  
2  &  9.86557  &  0.00098  &  0.00007  &  0.00098  &  0.5181   &  9.5D-07  &  5.1D-09  &  9.5D-07  \\  
3$^*$  &  1        &  0.00098  &  0.00001  &  1        &  1        &  9.5D-07  &  1.7D-10  &  1        \\  
\hline
\end{tabular}
\caption{Comparison of the bounds for $P_4$ with $a_1=10^4$,
$a_2=\sqrt{a_1/f_b}$, and $b=10$.}
\label{tbl_mtx10}
}
\end{table}

\end{example}

\section{The tropical scaling and its relation with the bounds}\label{sec-scaling}
In the examples of the previous section, 
we considered matrix polynomials the coefficients of which 
have different order of magnitude. Hence,
their eigenvalues need to be computed 
by using some scaling, otherwise the relative error of
their approximation may become very large.
We used here the ``tropical scaling'' introduced 
in~\cite{posta09}, that we next recall. We shall see
that in most of the examples of Section~\ref{sec:examples},
it improves the relative error,
by comparison with a usual method, and that its efficiency is
related with the closeness of the ratios 
$\frac{|\zeta_1\dots \zeta_{nk}|}{(\alpha_1\dots\alpha_k)^n}$ to $1$.

\subsection{Tropical scaling}
\label{sec:tropscal}
Consider the matrix polynomial 
$P(\lambda)=\sum_{j=0}^d A_j\lambda^j$ with degree $d$ 
(as in~\eqref{matrixpoly}).
Denote $\trop p(x)=\max_{0\leq j\leq d} \|A_j\| x^j$
the tropical polynomial associated to $P$
(as in~\eqref{eq:tropdefini}) and
$\alpha_1\leq \dots\leq \alpha_d$ the tropical roots of $\trop p$.
Let also $k_0=0,k_1,\dots,k_q=d$ be the sequence of absciss\ae\ of the vertices
of the Newton polygon of $\trop p(x)$, as shown in Figure~\ref{newtonpolygon}.

It has been observed numerically in~\cite{posta09}, and proved
in the special case of quadratic pencils, that 
when the coefficient matrices are well conditioned, 
the tropical roots are good approximations of the moduli of the eigenvalues.
Corollary~\ref{cor-bounds} and Remark~\ref{rem-tight-cond} 
prove the same result for general matrix polynomials of any degree.
So we expect to get $(k_i-k_{i-1})n$ eigenvalues of $P$ with a modulus of order $\alpha_{k_i}$, for $i=1,\ldots, q$.

Given the order of magnitude $\alpha$ of an eigenvalue $\zeta$ of $P$,
one can apply the following scaling with respect to $\alpha$:
$\lambda=\alpha \mu$ and 
\begin{equation}
\label{eq:scalpoly}
\widetilde{P}(\mu):=(\trop p(\alpha))^{-1}P(\alpha \mu)=\sum_{j=0}^d\widetilde{A}_j\mu^j\enspace.
\end{equation}
The eigenvalue $\zeta$ of $P$ corresponds to the eigenvalue
 $\xi=\alpha^{-1}\zeta$ of $\widetilde{P}$.
If $\zeta$ is of order of magnitude $\alpha$, we get that 
$\xi$ is of order $1$, so that we expect that the numerical computation
of $\xi$ will behaves well, and 
better than the numerical computation of $\zeta$.

The tropical scaling algorithm, already proposed in~\cite{posta09},
consists in doing 
a different scaling for every expected order of magnitude
of the eigenvalues as follows.
Let us call \NEW{C{\&}QZ}, the algorithm consisting in computing
the eigenvalues (and eigenvectors) of the matrix polynomial $P$ by applying
QZ algorithm after a companion linearization $L$ of $P$.
For every tropical 
root $\alpha=\alpha_{k_i}$, $i=1,\ldots, q$, 
we compute the  eigenvalues of the scaled matrix polynomial
$\widetilde{P}$ in~\eqref{eq:scalpoly} using C{\&}QZ algorithm,
and we multiply them by $\alpha_{k_i}$.
We get $nd$ candidate eigenvalues of $P$.
Then, we keep among these candidate eigenvalues only the $n(k_i-k_{i-1})$ ones
having the closest
modulus to $\alpha_{k_i}$, and we say that these
candidate eigenvalues are {\em assigned} to $\alpha_{k_i}$. 
The other eigenvalues returned
by the C{\&}QZ algorithm are discarded. 
Note that we measure the distance between two moduli 
by the absolute value of the difference of the log of these moduli.
Finally, the whole spectrum is obtained as the union of the different groups 
of $n(k_i-k_{i-1})$ candidate eigenvalues assigned to the different tropical roots, for $1\leq i\leq q$.
If the moduli of the values of some groups interleave, meaning that
for $i<j$, a candidate eigenvalue $\lambda$ assigned to a tropical root
$\alpha_{k_i}$
is such that $|\lambda|>|\lambda'|$ for some candidate eigenvalue $\lambda'$
assigned to a tropical root $\alpha_{k_j}$, the algorithm returns
``fail'' (the assignment is inconsistent).

\subsection{Condition numbers and backward errors with or without tropical scalings}
\label{sec-scaling2}

Let us now give some new insight of the interest of the tropical scaling,
and its relation with the bounds of Sections~\ref{sec:Main_results}
and~\ref{sec-upperbound}.

Let us consider 
the condition numbers and backward errors of eigenvalues defined 
with respect to the relative errors (and left and right eigenvectors)
as in~\cite[Theorems 4 and 5]{tiss00}.
Recall that the accuracy of the computation of an eigenvalue can be evaluated 
by the product of its condition number and of its backward error.
In our case, this accuracy can be computed
either by using the condition number $\kappa_P$ 
and the backward error $\eta_P$ 
with respect to $P$ or the condition number $\kappa_L$ and  the 
backward error $\eta_L$ with respect to $L$. 

Since the above scaling does not affect the condition number of the eigenvalue
with respect to the matrix polynomial~\cite{hamarling},
and since QZ algorithm is backward stable, 
the accuracy of C{\&}QZ on a given eigenvalue will be improved after scaling
as soon as $\eta_P/\eta_L$ decreases or $\kappa_L/\kappa_P$  decreases
(here $\eta_P$, $\kappa_P$, $\eta_L$, $\kappa_L$  are
respectively the backward error and the condition number
of the eigenvalue with respect to the matrix polynomial and with 
respect to its linearization).
The last property writes
 $\kappa_{\widetilde{L}}/\kappa_{\widetilde{P}}<\kappa_L/\kappa_P$,
where $\kappa_{\widetilde{P}}$ is the condition number
of the eigenvalue $\xi=\alpha^{-1}\zeta$
with respect to  $\widetilde{P}$, and $\kappa_{\widetilde{L}}$
is the same with respect to the linearization of $\widetilde{L}$ 
of $\widetilde{P}$.

The results of~\cite{high2006} and~\cite{hamarling} show that,
 for quadratic pencils, $\kappa_L/\kappa_P$  is lower bounded and that 
under some conditions, it is
also upper bounded  when using the
scaling of~\cite{vandooren00} and the tropical scaling respectively.
This means that in that cases these scalings are almost minimizing the
ratio $\kappa_L/\kappa_P$ (among all scalings)
hence are almost maximizing the efficiency of C{\&}QZ.
These results have been generalized and improved 
in~\cite{marchesinitisseur} and~\cite{shti14},
for the companion linearization and several
linearizations of matrix polynomials  respectively.
In particular, using the property that 
the scaled polynomial $\widetilde{P}$ of~\eqref{eq:scalpoly}
satisfies $\max_{0\leq j\leq d}\|\widetilde{A}_j\|=1$,
it is shown that the following holds (for the companion linearization):
\[ \frac{1}{\sqrt{d}} \leq \kappa_L/\kappa_P \enspace,
\quad \kappa_{\widetilde{L}}/\kappa_{\widetilde{P}}\leq C_d 
\max(|\xi|,1/|\xi|)^{d} \enspace,\]
where $C_d$ is a constant depending only on $d$.
From these two inequalities, we see that the above scaling is almost maximizing 
the efficiency of C{\&}QZ as soon as $\xi=\alpha^{-1}\zeta$
is close to $1$ or equivalently $\alpha$ is a good order of magnitude 
of $\zeta$.
Hence the tropical scaling algorithm described in previous section
leads to almost the
best efficiency of C{\&}QZ as soon as 
all the ratios $\zeta_{n(k-1)+l}/\alpha_k$ with $k=1,\ldots, d$ and
$\ell=1,\ldots, n$ are close to $1$.
Several numerical examples are also given in~\cite{marchesinitisseur} 
and~\cite{shti14}.

\subsection{Condition numbers and backward errors in examples}
\label{sec:examples-cond}

In the following we compare
on the matrix polynomials of Section~\ref{sec:numexp},
the numerical accuracy of the tropical scaling algorithm
to the one of the standard C{\&}QZ algorithm.
As said before this accuracy can be measured either using 
$\kappa_L$  or $\eta_P$, or
the ratios $\kappa_L/\kappa_P$  or $\eta_P/\eta_L$.
Note that the condition numbers are computed using the 
result of the computation of the eigenvalues and eigenvectors,
so only approximate condition numbers will be obtained.
Actually, without scaling some of the eigenvalues will be aberrant.
Moreover, the numerical values of
$\kappa_P$ and $\eta_L$ may change after scaling, although they
should not change theoretically.

In Figure~\ref{cond_ex61}, we plot the condition numbers and backward errors 
for the polynomial $P_1$ of Example~\ref{matx_exp_2}.
The number of eigenvalues is 27, but they moduli are close to
the $4$ tropical eigenvalues as shown in Figure~\ref{tbl_mtx1}.
So, we see only $4$ points in each graph corresponding to the
computations using the tropical scaling.
However, in the graphs  corresponding to the
computations without scaling, we see an additional point 
with approximate modulus $10^{-3}$ corresponding
to aberrant values (there are no eigenvalues with this order 
of magnitude).
We see also that the condition number of the linearization
and the backward error of the matrix polynomial are much improved
by using the tropical scaling, in particular for the smallest eigenvalues.

Similar conclusions are obtained for the polynomials $P_2$ and $P_3$ 
of Example~\ref{matx_exp_2}.

\begin{figure}[htbp]
\begin{center}
\includegraphics[scale=0.37]{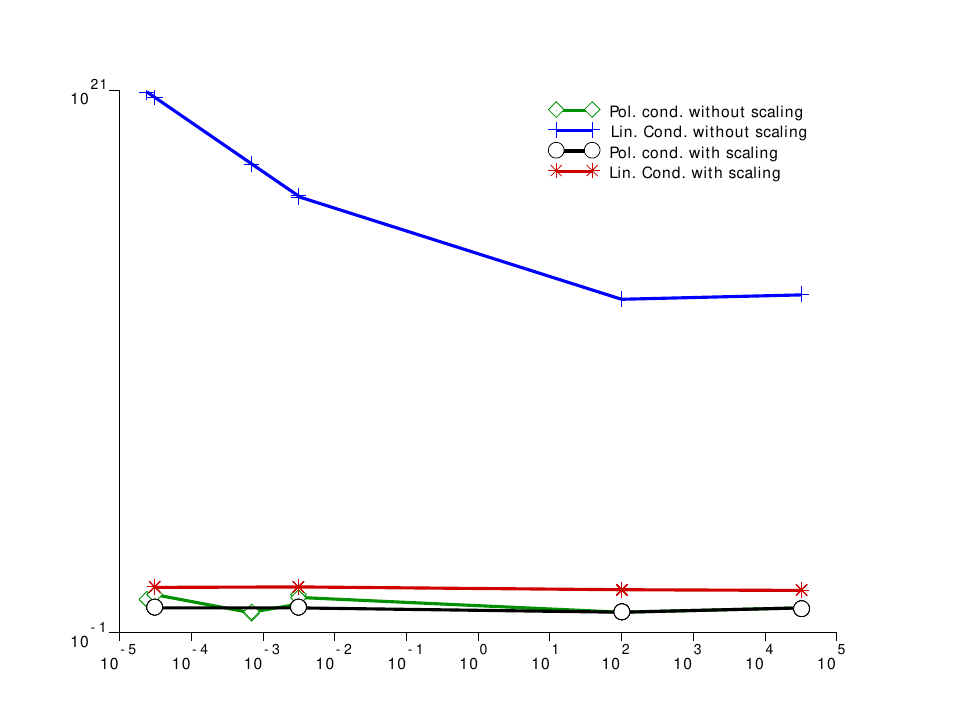}%
\hfill
\includegraphics[scale=0.37]{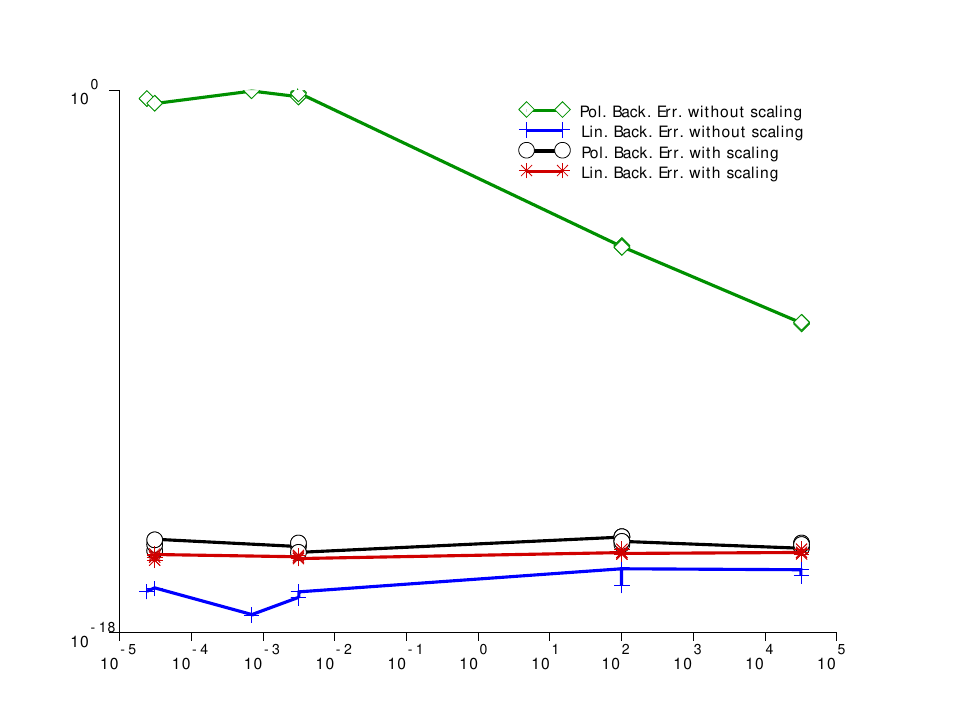}%
\end{center}
\caption{Eigenvalues condition numbers and backward errors for $P_1$ of Example~\ref{matx_exp_1}}
\label{cond_ex61}
\end{figure}

\begin{figure}[htbp]
\begin{center}
\includegraphics[scale=0.37]{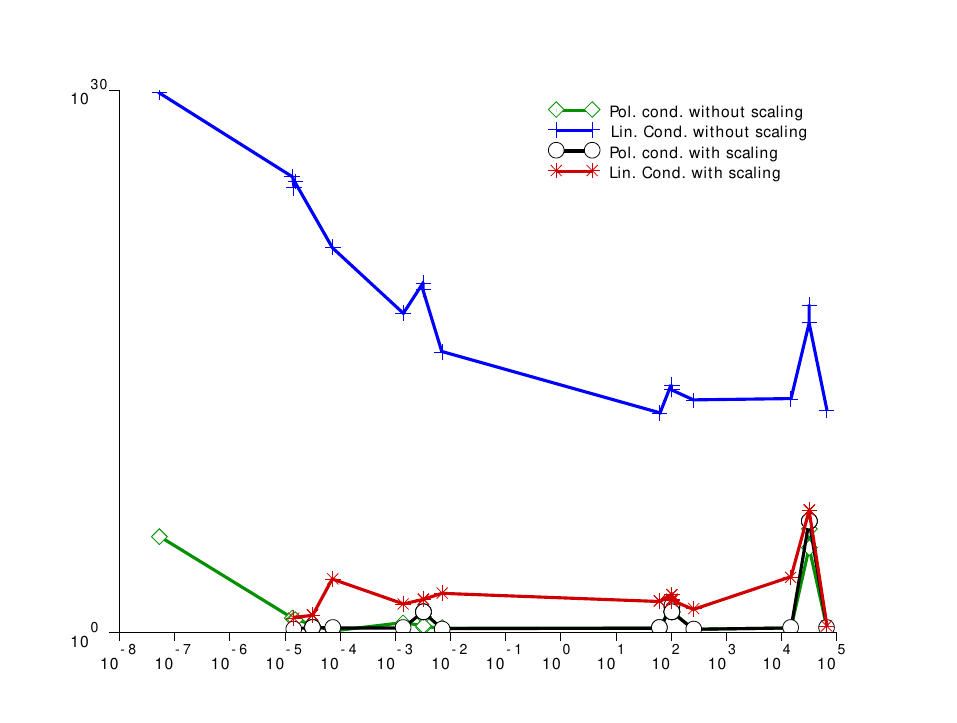}%
\hfill
\includegraphics[scale=0.37]{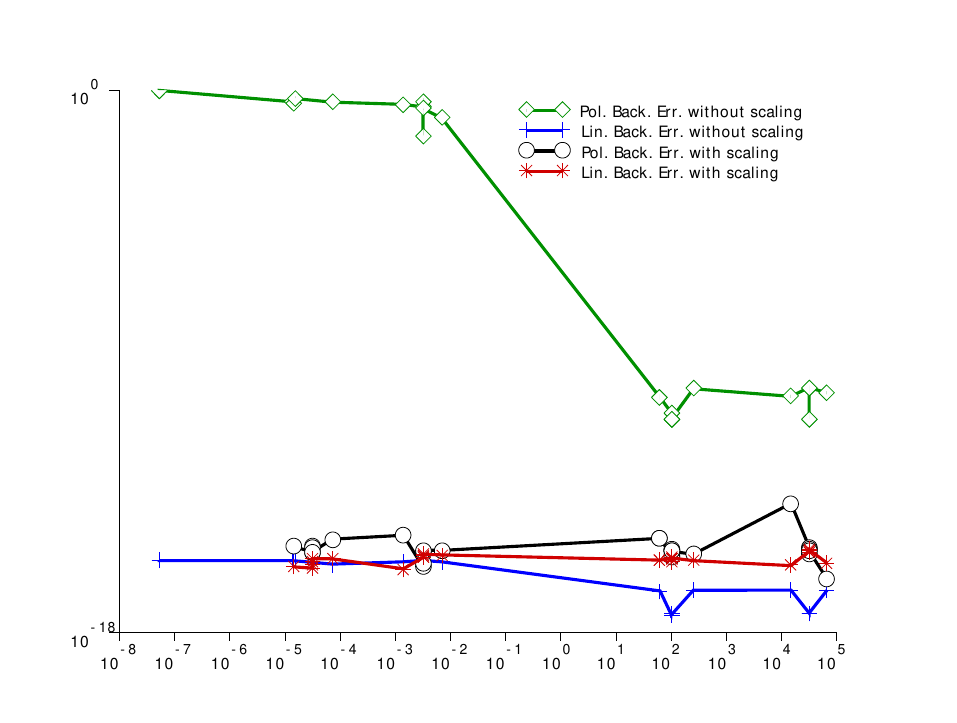}%
\end{center}
\caption{Eigenvalues condition numbers and backward errors for $P_2$
of Example~\ref{matx_exp_2}}
\label{cond_ex62P2}
\end{figure}

\begin{figure}[htbp]
\begin{center}
\includegraphics[scale=0.37]{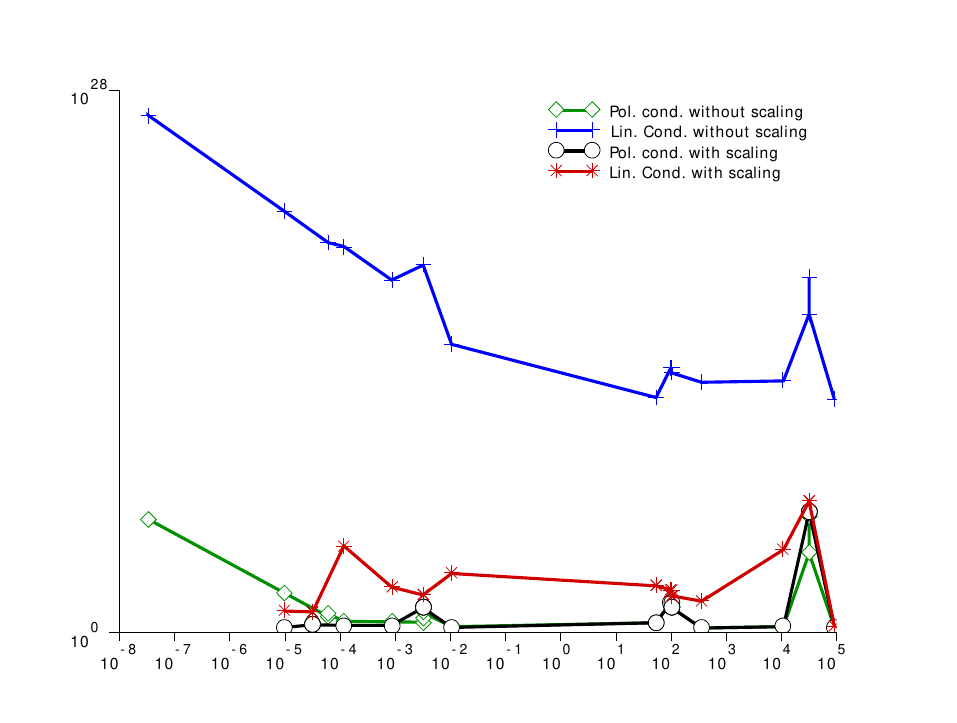}%
\hfill
\includegraphics[scale=0.37]{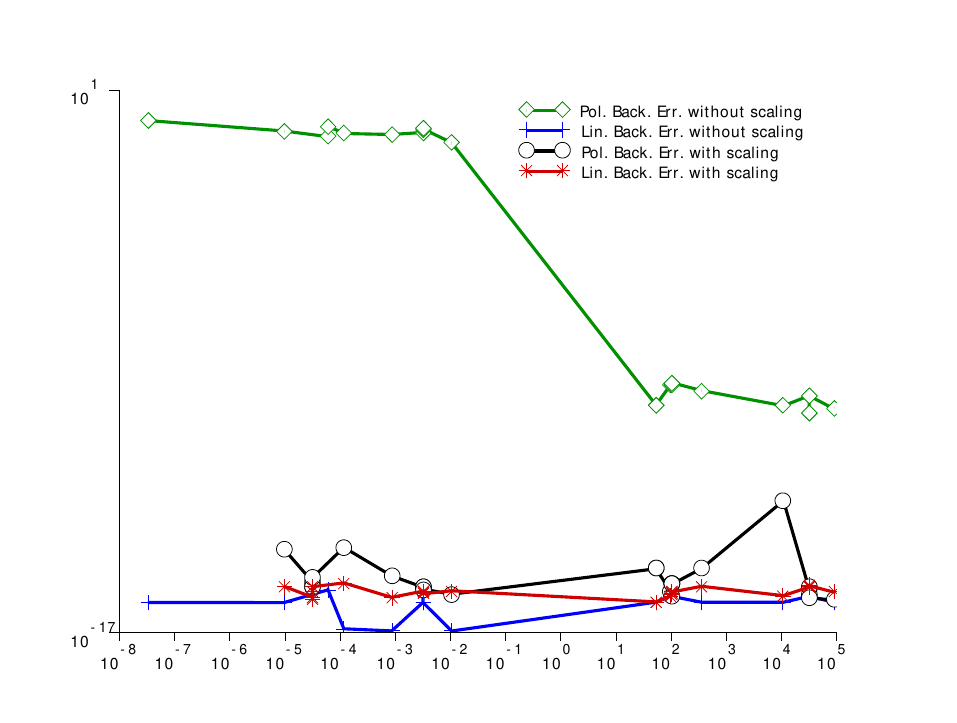}%
\end{center}
\caption{Eigenvalues condition numbers and backward errors for $P_3$
of Example~\ref{matx_exp_2}
without scaling or with
scaling with respect to Schatten $1$-norm}
\label{cond_ex62P3}
\end{figure}

We finally consider the polynomial $P_4$ of Example~\ref{matx_exp_3}
with $a_1=a_2=10^4$.
In this case, we can compute the eigenvalues using the diagonal
structure of the matrices, for verification.
We saw that the tightness of the lower bound decreases when $b$ increases.
The efficiency of the tropical scaling also decreases with $b$.
Compare the results for $b=10$ and $b=100$
in Figures~\ref{cond_ex63_p4_10} and \ref{cond_ex63_p4_100}, respectively.
When $b=10^3$, the tropical scaling fails, leading to an error 
for the logarithm of the modulus greater than $1.99$.

\begin{figure}[htbp]
\begin{center}
\includegraphics[scale=0.37]{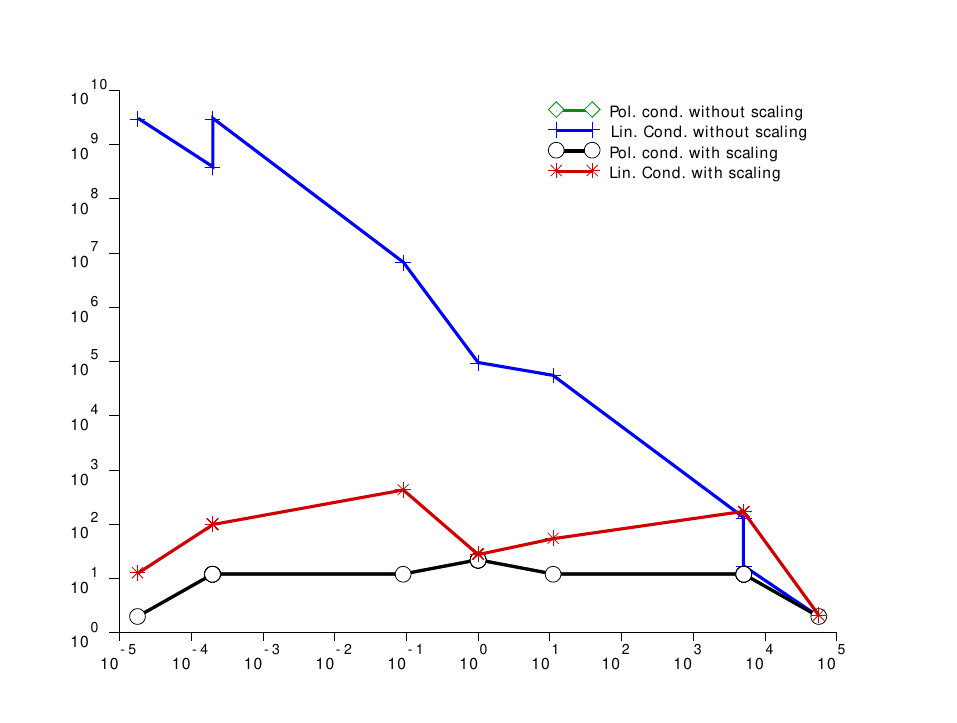}%
\hfill
\includegraphics[scale=0.37]{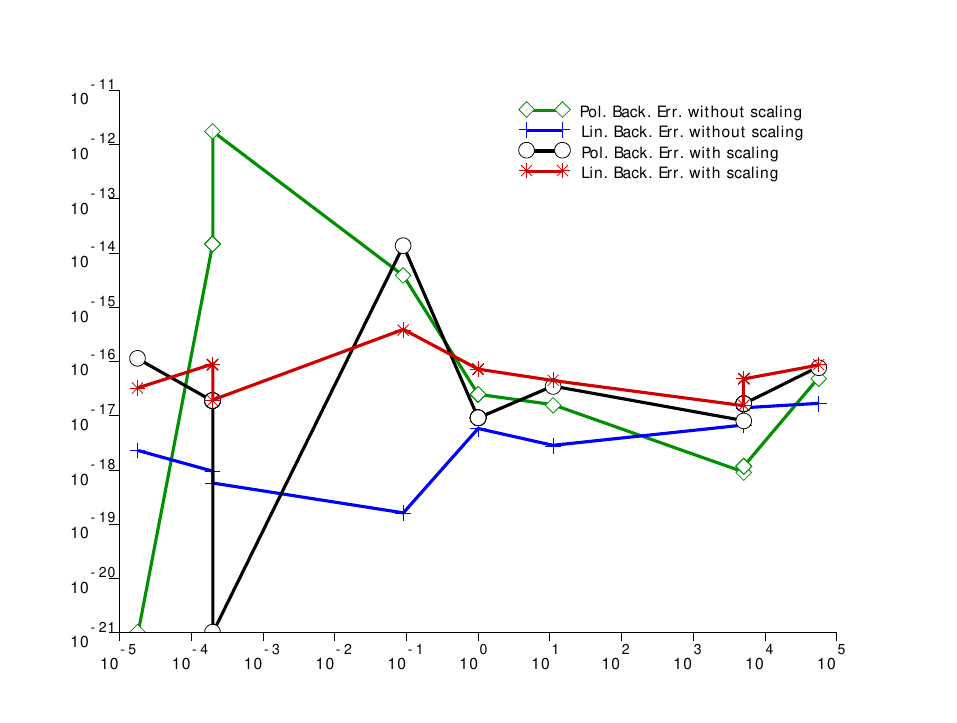}%
\end{center}
\caption{Eigenvalues condition numbers and backward errors for $P_4$
with $b=10$, without scaling or with
scaling with respect to  Frobenius norm}
\label{cond_ex63_p4_10}
\end{figure}

\begin{figure}[htbp]
\begin{center}
\includegraphics[scale=0.37]{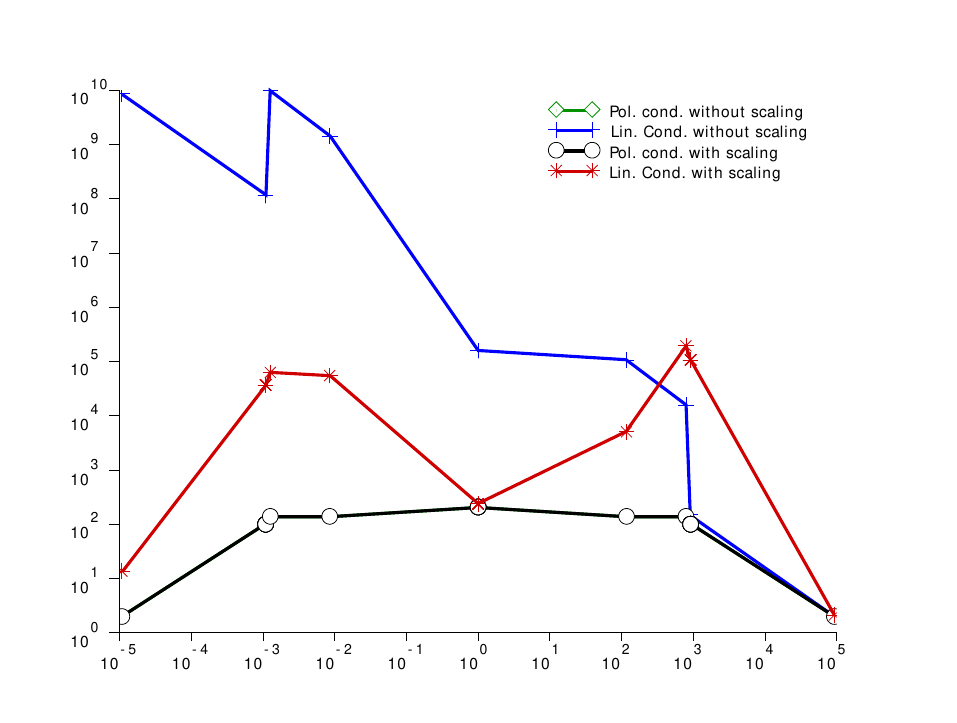}%
\hfill
\includegraphics[scale=0.37]{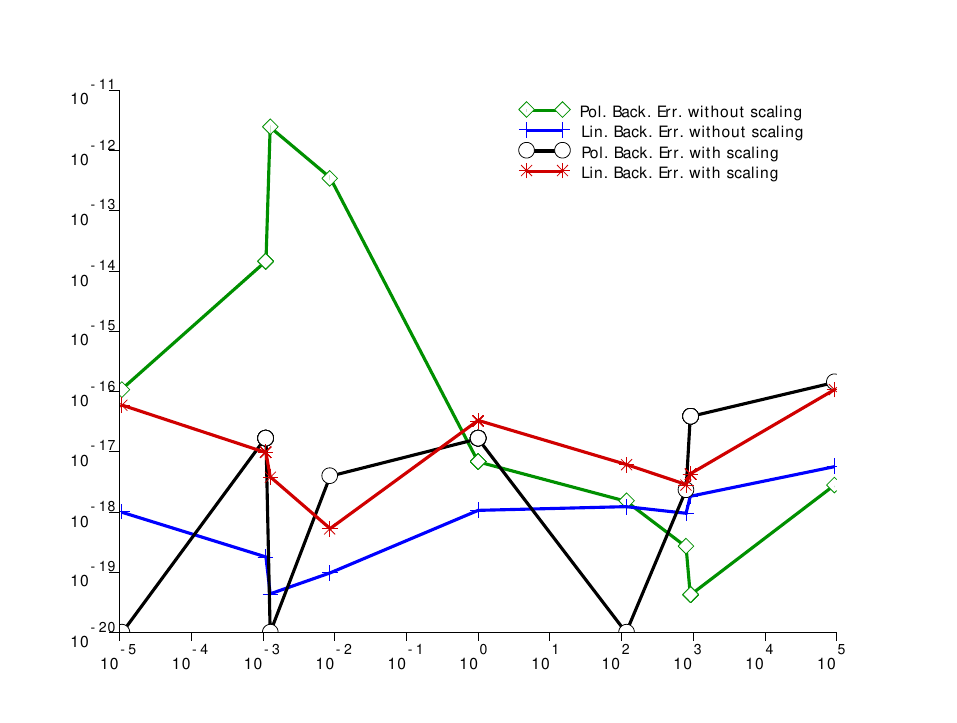}%
\end{center}
\caption{Eigenvalues condition numbers and backward errors for $P_4$
with $b=100$, without scaling or with
scaling with respect to  Frobenius norm}
\label{cond_ex63_p4_100}
\end{figure}

\bibliographystyle{plain}
\bibliography{references}

\begin{thebibliography}{10}

\bibitem{abg04}
M.~Akian, R.~Bapat, and S.~Gaubert.
\newblock Perturbation of eigenvalues of matrix pencils and optimal assignment
  problem.
\newblock {\em C. R. Acad. Sci. Paris, S\'erie I}, 339:103--108, 2004.
\newblock Also \arxiv{math.SP/0402438}.

\bibitem{abg04b}
M.~Akian, R.~Bapat, and S.~Gaubert.
\newblock Min-plus methods in eigenvalue perturbation theory and generalised
  {L}idskii-{V}ishik-{L}justernik theorem.
\newblock \arxiv{math.SP/0402090}, 2005.

\bibitem{abg05}
M.~Akian, R.~Bapat, and S.~Gaubert.
\newblock Max-plus algebras.
\newblock In L.~Hogben, editor, {\em Handbook of Linear Algebra (Discrete
  Mathematics and Its Applications)}, volume~39. Chapman \& Hall/CRC, 2006.
\newblock Chapter 25.

\bibitem{akianbapatgaubert2016}
M.~Akian, R.~Bapat, and S.~Gaubert.
\newblock Non-archimedean valuations of eigenvalues of matrix polynomials.
\newblock {\em Linear Algebra Appl.}, 498:592--627, 2016.

\bibitem{ABG96}
M.~Akian, R.~B. Bapat, and S.~Gaubert.
\newblock Asymptotics of the {P}erron eigenvalue and eigenvector using max
  algebra.
\newblock {\em C. R. Acad. Sci. Paris}, S\'er. {I} 327:927--932, 1998.

\bibitem{marche}
M.~Akian, S.~Gaubert, and A.~Marchesini.
\newblock Tropical bounds for eigenvalues of matrices.
\newblock {\em Linear Algebra and its Applications}, January 2014.
\newblock See also \arxiv{1309.7319}.

\bibitem{marchesinitisseur}
M.~Akian, S.~Gaubert, A.~Marchesini, and F.~Tisseur.
\newblock Tropical diagonal scaling of matrix pencils, 2015.
\newblock In preparation.

\bibitem{bcoq}
F.~Baccelli, G.~Cohen, G.J. Olsder, and J.P. Quadrat.
\newblock {\em Synchronization and Linearity}.
\newblock Wiley, 1992.

\bibitem{bini96}
D.~A. Bini.
\newblock Numerical computation of polynomial zeros by means of {A}berth's
  method.
\newblock {\em Numerical Algorithms}, 13:179--200, 1996.

\bibitem{binisharify}
D.A. Bini, V.~Noferini, and M.~Sharify.
\newblock Locating the {E}igenvalues of {M}atrix {P}olynomials.
\newblock {\em SIAM J. Matrix Anal. Appl.}, 34(4):1708--1727, 2013.

\bibitem{birkhoff}
G.~D. Birkhoff.
\newblock An elementary double inequality for the roots of an algebraic
  equation having greatest absolute value.
\newblock {\em Bull. Amer. Math. Soc.}, 21(10):494--495, 1915.

\bibitem{cuning80}
R.~A. Cuninghame-Green and P.~F.~J. Meijer.
\newblock An algebra for piecewise-linear minimax problems.
\newblock {\em Discrete Appl. Math.}, 2(4):267--294, 1980.

\bibitem{kapranov}
M.~Einsiedler, M.~Kapranov, and D.~Lind.
\newblock Non-{A}rchimedean amoebas and tropical varieties.
\newblock {\em J. Reine Angew. Math.}, 601:139--157, 2006.

\bibitem{vandooren00}
H.-Y. Fan, W-W. Lin, and P.~Van~Dooren.
\newblock Normwise scaling of second order polynomial matrices.
\newblock {\em SIAM J. Matrix Anal. Appl.}, 26(1):252--256, 2004.

\bibitem{shmuel}
S.~Friedland.
\newblock Limit eigenvalues of nonnegative matrices.
\newblock {\em Linear Algebra Appl.}, 74:173--178, 1986.

\bibitem{fujiwara}
M.~Fujiwara.
\newblock {\"U}ber die obere schranke des absoluten betrages der wurzeln einer
  algebraischen gleichung.
\newblock {\em Tohoku Math. J.}, 1(10):167--171, 1916.

\bibitem{posta09}
S.~Gaubert and M.~Sharify.
\newblock Tropical scaling of polynomial matrices.
\newblock In Rafael Bru and Sergio Romero-Viv\'o, editors, {\em Proceedings of
  the third Multidisciplinary International Symposium on Positive Systems:
  Theory and Applications (POSTA 09)}, volume 389 of {\em LNCIS}, pages
  291--303, Valencia, Spain, 2009. Springer.
\newblock See also \arxiv{0905.0121}.

\bibitem{Graham72}
R.~L. Graham.
\newblock An efficient algorithm for determining the convex hull of a finite
  planar set.
\newblock {\em Inf. Proc. Lett.}, 1(4):132--133, 1972.

\bibitem{hadamard1893}
J.~Hadamard.
\newblock {\'E}tude sur les propri\'et\'es des fonctions enti\`eres et en
  particulier d'une fonction consid\'er\'ee par {R}iemann.
\newblock {\em Journal de Math\'ematiques Pures et Appliqu\'ees, 4i\`eme
  s\'erie}, 9:171--216, 1893.

\bibitem{hamarling}
S.~Hammarling, C.~J. Munro, and F.~Tisseur.
\newblock An algorithm for the complete solution of quadratic eigenvalue
  problems.
\newblock {\em ACM Transactions on Mathematical Software (TOMS)}, 39(3), 2013.

\bibitem{highamtisseur03}
N.~J. Higham and F.~Tisseur.
\newblock Bounds for eigenvalues of matrix polynomials.
\newblock {\em Linear Algebra Appl.}, 358:5--22, 2003.
\newblock Special issue on accurate solution of eigenvalue problems (Hagen,
  2000).

\bibitem{high2006}
Nicholas~J. Higham, D.~Steven Mackey, and Fran{\c{c}}oise Tisseur.
\newblock The conditioning of linearizations of matrix polynomials.
\newblock {\em SIAM J. Matrix Anal. Appl.}, 28(4):1005--1028, 2006.

\bibitem{itenberg}
I.~Itenberg, G.~Mikhalkin, and E.~Shustin.
\newblock {\em Tropical algebraic geometry}.
\newblock Oberwolfach seminars. Birkh\"auser, 2007.

\bibitem{Jensen1899}
J.~Jensen.
\newblock Sur un nouvel et important th\'eor\`eme de la th\'eorie des
  fonctions.
\newblock {\em Acta Mathematica}, 22:359--364, 1899.

\bibitem{landau}
E.~Landau.
\newblock Sur quelques th{\'e}or{\`e}mes de m. petrovitch relatifs aux
  z{\'e}ros des fonctions analytiques.
\newblock {\em Bulletin de la S.\ M.\ F.}, 33:251--261, 1905.

\bibitem{lindelof}
E.~L. Lindel\"of.
\newblock M{\'e}moire sur la th{\'e}orie des fonctions enti{\`e}res de genre
  fini.
\newblock {\em Acta Societatis Scientiarum Fennicae}, XXXI, 1902.

\bibitem{zubelli}
G.~Malajovich and J.~P. Zubelli.
\newblock Tangent {G}raeffe iteration.
\newblock {\em Numer. Math.}, 89(4):749--782, 2001.

\bibitem{melman}
A.~Melman.
\newblock Generalization and variations of {P}ellet's theorem for matrix
  polynomials.
\newblock {\em Linear Algebra Appl.}, 439(5):1550--1567, 2013.

\bibitem{SharifyTisseur}
V.~Noferini, M.~Sharify, and F.~Tisseur.
\newblock Tropical roots as approximations to eigenvalues of matrix
  polynomials.
\newblock {\em SIAM J. Matrix Anal. Appl.}, 36(1):138--157, 2015.

\bibitem{Ostrowski1}
A.~Ostrowski.
\newblock Recherches sur la m\'ethode de {G}raeffe et les z\'eros des polynomes
  et des s\'eries de {L}aurent.
\newblock {\em Acta Math.}, 72:99--155, 1940.

\bibitem{Ostrowski2}
A.~Ostrowski.
\newblock Recherches sur la m\'ethode de {G}raeffe et les z\'eros des polynomes
  et des s\'eries de {L}aurent. {C}hapitres {III} et {IV}.
\newblock {\em Acta Math.}, 72:157--257, 1940.

\bibitem{passare}
M.~Passare and H.~Rullg{\aa}rd.
\newblock Amoebas, {M}onge-{A}mp\`ere measures, and triangulations of the
  {N}ewton polytope.
\newblock {\em Duke Math. J.}, 121(3):481--507, 2004.

\bibitem{purbhoo}
K.~Purbhoo.
\newblock A {N}ullstellensatz for amoebas.
\newblock {\em Duke Math. J.}, 141(3):407--445, 2008.

\bibitem{sturmfels2005}
J.~Richter-Gebert, B.~Sturmfels, and T.~Theobald.
\newblock First steps in tropical geometry.
\newblock In {\em Idempotent mathematics and mathematical physics}, volume 377
  of {\em Contemp. Math.}, pages 289--317. Amer. Math. Soc., Providence, RI,
  2005.

\bibitem{meisam2011}
M.~Sharify.
\newblock {\em Scaling Algorithms and Tropical Methods in Numerical Matrix
  Analysis: Application to the Optimal Assignment Problem and to the Accurate
  Computation of Eigenvalues}.
\newblock PhD thesis, Ecole Polytechnique, France, 2011.

\bibitem{shti14}
M.~Sharify and F.~Tisseur.
\newblock Effect of tropical scaling on linearizations of matrix polynomials:
  backward error and conditioning.
\newblock t-mims-eprint, inst-MIMS, inst-MIMS:adr, 2014.
\newblock In preparation.

\bibitem{specht1938}
W.~Specht.
\newblock Zur theorie der algebraischen gleichungen.
\newblock {\em Jahr. DMV}, 48:142--145, 1938.

\bibitem{tiss00}
F.~Tisseur.
\newblock Backward error and condition of polynomial eigenvalue problems.
\newblock {\em Linear Algebra Appl.}, 309:339--361, 2000.

\bibitem{viro}
O.~Viro.
\newblock Dequantization of real algebraic geometry on logarithmic paper.
\newblock In {\em European Congress of Mathematics, Vol. I (Barcelona, 2000)},
  volume 201 of {\em Progr. Math.}, pages 135--146. Birkh\"auser, Basel, 2001.

\end{thebibliography}

\end{document}